\def\sD{{\mathfrak D}}      
   \def\sH{{\mathfrak H}}   
      \def\sL{{\mathfrak L}}
\def\sM{{\mathfrak M}}   \def\sN{{\mathfrak N}}   
\def\sS{{\mathfrak S}}
      \def\dC{{\mathbb C}}
\def\dD{{\mathbb D}}
   \def\dN{{\mathbb N}}   
      \def\dR{{\mathbb R}}
\def\cA{{\mathcal A}}   \def\cB{{\mathcal B}}   \def\cC{{\mathcal C}}
\def\cD{{\mathcal D}}      
   \def\cH{{\mathcal H}}   
   \def\cK{{\mathcal K}}   
\def\cM{{\mathcal M}}   \def\cN{{\mathcal N}}   
\def\cS{{\mathcal S}}   \def\cT{{\mathcal T}}   \def\cU{{\mathcal U}}
\def\cV{{\mathcal V}}      \def\cX{{\mathcal X}}
   \def\cZ{{\mathcal Z}}
      \def\bL{{\mathbf L}}
\def\bM{{\mathbf M}}      
\def\bS{{\mathbf S}}   \def\bT{{\mathbf T}}
\def\dim{{\rm dim\,}}
\def\wt{\widetilde}
\def\wh{\widehat}
\def\bL{{\mathbf L}}
\def\RE{{\rm Re\,}}
\def\IM{{\rm Im\,}}
\def\Ext{{\rm Ext\,}}
\def\ran{{\rm ran\,}}
\def\cran{{\rm \overline{ran}\,}}
\def\dom{{\rm dom\,}}
\def\cspan{{\rm \overline{span}\, }}
\def\uphar{{\upharpoonright\,}}
\def\Ext{{\rm Ext\,}}
\def\f{\varphi}
\DeclareMathOperator{\hplus}{\, \widehat + \,}
\newtheorem{theorem}{Theorem}[section]
\newtheorem{proposition}[theorem]{Proposition}
\newtheorem{corollary}[theorem]{Corollary}
\newtheorem{definition}[theorem]{Definition}
\newtheorem{remark}[theorem]{Remark}
\numberwithin{equation}{section}
\numberwithin{equation}{section}
\begin{document}

\title[Compressed  resolvents of $sc$-extensions with exit]
{Compressed resolvents of selfadjoint contractive extensions with exit and holomorphic operator-functions associated with them}

\author[Yury Arlinski\u{\i}]{Yu.M. Arlinski\u{\i}}
\address{Department of Mathematical Analysis  \\
East Ukrainian National University  \\
and
Department of Computer Mathematics and Mathematical Modeling\\
National Technical University "Kharkiv  Polytechnic Institute"\\
21, Frunze str.\\
Kharkiv, 61002, Ukraine}

\email{yury.arlinskii@gmail.com}

\author[Seppo Hassi]{S. Hassi}
\address{Department of Mathematics and Statistics  \\
University of Vaasa  \\
P.O. Box 700  \\
65101 Vaasa  \\
Finland} \email{sha@uwasa.fi}

\subjclass[2010]{47A06, 47A20, 47A48, 47A56, 47B25, 47B44}

\keywords{Selfadjoint extension, compressed resolvent, transfer function}
\vskip 1truecm
\thispagestyle{empty}
\baselineskip=12pt

\begin{abstract}
Contractive selfadjoint extensions of a Hermitian contraction $B$ in
a Hilbert space $\sH$ with an exit in some larger Hilbert space
$\sH\oplus\cH$ are investigated. This leads to a new geometric
approach for characterizing analytic properties of holomorphic
operator-valued functions of Kre\u{\i}n-Ovcharenko type, a class of
functions whose study has been recently initiated by the authors.
Compressed resolvents of such exit space extensions are also
investigated leading to some new connections to transfer functions
of passive discrete-time systems and related classes of holomorphic
operator-valued functions.
\end{abstract}

\maketitle
\section{Introduction}

Let $S$ be a closed symmetric, possibly nondensely defined, linear
operator in a (complex separable) Hilbert space $\sH$. As is well
known, the operator $S$ admits selfadjoint extensions possibly in a
larger Hilbert space $\wt \sH=\sH\oplus\cH$ \cite{AkhGl},
\cite{Naj0}. Let $\wt A$ be such extension. Then there are two
compressed resolvents
$P_\sH(\wt A-\lambda I)^{-1}\uphar\sH$ and $P_\cH(\wt A -\lambda
I)^{-1}\uphar\cH.$  As is well known, the function $P_\sH(\wt
A-\lambda I)^{-1}\uphar\sH$ is called \textit{generalized resolvent}
of $S$. First results related to descriptions/parameterizations of
canonical and generalized resolvents of densely defined closed
symmetric operator with equal and finite deficiency indices, and
their applications to the moment and interpolation problems were
obtained by M.A.~Na\u{\i}mark \cite{Naj1,Naj2} and M.G.~Kre\u\i n
\cite{Kr44,Kr46,Kr1967}. Kre\u\i n's approach has been further
developed in M.G.~Kre\u\i n and H.~Langer \cite{KL1,KL2}, where
densely defined symmetric operators in a Pontryagin space setting
were considered. A.V.~Shtraus in \cite{shtraus1954} suggested
another approach for the investigation and parametrization of all
generalized resolvents of an arbitrary symmetric, not necessary
densely defined, operator. The Shtraus representation
\cite{shtraus1954} for $P_\sH(\wt A-\lambda I)^{-1}\uphar\sH$ takes
the form
\[
P_\sH(\wt A-\lambda I)^{-1}\uphar\sH=(\cA(\lambda)-\lambda
I)^{-1},\; \lambda\in\dC\setminus\dR_+,
\]
where $\cA(\lambda)$ is a holomorphic family of quasi-selfadjoint
extensions of $S$ $(S\subset \cA(\lambda)\subset S^*)$, $A(\lambda)$
is maximal dissipative for $\IM\lambda<0$, and maximal
anti-dissipative for $\IM\lambda>0.$ A recent survey on Shtraus
approach, its developments, and corresponding references can be
found in \cite{zagorod}. Extensions of symmetric linear relations
and their generalized resolvents have been studied in
\cite{codd1972,codd1974,DdeS,LT}. Furthermore, M.G.~Kre\u\i n and
I.E.~Ovcharenko \cite{KrO} and H.~Langer and B.~Textorius \cite{LT1}
obtained descriptions of all generalized resolvents of selfadjoint
contractive extensions and contractive extensions of dual pair of
contractions.

The main objective in this paper is to study compressed resolvents
$P_\sH(z\wt B-I)^{-1}\uphar\sH$ and $P_\cH(z\wt B-I)^{-1}\uphar\cH$
of selfadjoint contractive extensions ($sc$-extensions) $\wt B$
(with exit in some larger complex separable Hilbert space
$\sH\oplus\cH$) of a nondensely defined Hermitian contraction $B$ in
$\sH$ and investigate the interplay that occurs in certain
associated analytic operator functions. This investigation is
motivated by some further applications which involve boundary
triplets, boundary relations, and the corresponding Weyl functions
and Weyl families; cf. \cite{DM1,DM2,DHMS06}. In this paper some new
connections between compressed resolvents and transfer functions of
corresponding passive selfadjoint discrete-time systems are
established; see Theorems \ref{comrescontr}, \ref{realiz1} with a
further consequence established in Theorem \ref{repweyl11}. There
are also a couple of other new properties that complement some
well-known results established in \cite{Kr,KrO} and are related to
the shorted operators and selfadjoint contractive extensions; see
Theorems \ref{shorts11} and \ref{equshorts}. These results lead to a
new construction of special pairs of $sc$-extensions of $B$ without
exit by means of $sc$-extensions with exit with certain prescribed
geometric properties. The main result in this connection is
established in Theorem~\ref{construc}. The interest in studying such
special pairs of $sc$-extensions of $S$ possessing certain specific
geometric properties comes from the fact that they play a central
role in characterizing analytic properties of Kre\u{\i}n-Ovcharenko
type holomorphic operator functions which originally appeared in
\cite{KrO} and whose systematic study was initiated in \cite{AHS}.

\section{Preliminaries}

\subsection{Linear fractional transformation of sectorial operators
and linear relations} \label{FLTr}

On the set of all linear relations (l.r.) in a Hilbert space $\sH$
define the linear fractional transformation (the Cayley transform)
\begin{equation}\label{CALTR}
 \cC(\bS)=\bT=\left\{\left<x+x',x-x'\right>:
\left<x,x'\right>\in\bS\right\}.
\end{equation}
Clearly, $\cC(\cC(\bS))=\bS$. Let $\bS$ be an accretive l.r. in
$\sH$, i.e., $\RE(x',x)\ge 0$ for all $\left<x,x'\right>\in\bS$; see
\cite{Ka,RoBe}. Then it follows from the identity
\[
 \|x+x'\|^2-\|x-x'\|^2=4\RE(x',x)
\]
that $\bT$ is the graph of a contraction $T$ in $\sH$, $||T||\le 1$,
and $\dom T =\dom \bT$ is a subspace in $\sH$. Conversely, if $T$ is
a contraction in $\sH$ defined on a subspace $\dom T\subseteq \sH$,
then
\[
\bS=\left\{\left<(I+T)h,(I-T)h\right>,\; h\in\dom T\right\}
\]
is an accretive l.r. in $\sH$. The transformation $\cC$ can be
rewritten in operator form as follows
\[
\cC(\bS)= T=-I +2(I+\bS)^{-1},\; \bS=(I-T)(I+T)^{-1}=-I+2(I+T)^{-1}.
\]
The following properties are clear from the above formulas:
\begin{itemize}
\item $\bS$ is the graph of an accretive operator if and only
if $\ker(I_H+T)=\{0\},$
\item $\bS$ is $m$-accretive if and only if $\dom T=H$,
\item $\bS$ is nonnegative selfadjoint relation if and only if $T$ is a selfadjoint
contraction.
\end{itemize}
In the sequel we will denote by $\bL(\sH_1,\sH_2)$ the set of all
linear bounded operators acting from $\sH_1$ into $\sH_2$ and by
$\bL(\sH)$ the Banach algebra $\bL(\sH,\sH)$.

Recall that for a contraction $T\in \bL(\sH_1,\sH_2)$ the
nonnegative square root $D_T =(I - T^*T)^{1/2}$ is called the defect
operator of $T$ and ${\sD}_T$ (the so-called defect subspace)
denotes the closure of the range ${\ran}D_T$. For the defect
operators one has the well-known commutation relation $TD_T =
D_{T^*}T$. Since
\[
 \begin{bmatrix} T & D_{T^*} \end{bmatrix}\begin{bmatrix} T & D_{T^*}
 \end{bmatrix}^*
 =TT^*+D_{T^*}^2=I_2,
\]
one has $\ran T+ \ran D_{T^*}=\sH_2$. In general this sum is not
direct: one has
\begin{equation}\label{DTran1}
 \ran T\cap \ran D_{T^*}=\ran TD_T=\ran D_{T^*}T,
\end{equation}
as can be checked directly. It is also easily seen that
\begin{equation}\label{DTran2}
 T(\ker D_T)=\ker D_{T^*}, \quad T^*(\ker D_{T^*})=\ker D_{T}.
\end{equation}
Hence, $\ker D_T=\{0\}$ if and only if $\ker D_{T^*}=\{0\}$.

\begin{definition}
\label{CAS} \cite{Arl1987}. Let $\alpha\in (0,\pi/2)$ and let $A$ be
a linear operator in the Hilbert space $H$ defined on a subspace
$\dom A$. If
\begin{equation}
\label{CA1} ||A\sin\alpha\pm i\cos\alpha I_H||\le 1,
\end{equation}
then in the case $\dom A=H$ we say that $A$ belongs to the class
$C_H(\alpha)$, and in the case $\dom A\ne H$ we say that $A$ is
$C_H(\alpha)$-suboperator.
\end{definition}
The condition \eqref{CA1} is equivalent to
\begin{equation}
\label{CA2} 2|\IM (Af,f)|\le\tan\alpha(||f||^2-||Af||^2),\; f\in\dom
A.
\end{equation}
Therefore, $C_H(\alpha)$-suboperator is a contraction. Due to
\eqref{CA2} it is natural to consider Hermitian (selfadjoint)
contractions in $H$ as $C_H(0)$-suboperators (operators of the class
$C_H(0)$, respectively). In view of \eqref{CA2} one can write
\[
C_H(0)=\underset{\alpha\in(0,\pi/2)}{\bigcap}C_H(\alpha).
\]
Analogously, the convex hull
$C(\alpha)=\left\{z\in\dC:|z\sin\alpha\pm i\cos\alpha|<1\right\}$ in
the complex plane is denoted by $C(\alpha)$. If $\alpha=0$, then the
above intersection equals $C(0)=[-1,1].$ Notice that the linear
fractional transformation \eqref{CALTR} establishes a one-to-one
correspondence between $\alpha$-sectorial ($m-\alpha$-sectorial)
l.r. (as defined in \cite{Ka,RoBe}) in $H$ and
$C_H(\alpha)$-suboperators (operators of the class $C_H(\alpha)$,
respectively). In addition, $T\in C_H(\alpha)$ if and only if the
operator $(I-T^*)(I+T)$ is a sectorial operator with the vertex at
the origin and the semiangle $\alpha$; see \cite{Arl1991}. Denote
\[
 \wt C_H:=\underset{\alpha\in[0,\pi/2)}{\bigcup}C_H(\alpha).
\]
Properties of operators of the class $\wt C_H$ were studied in
\cite{Arl1987, Arl1991}. In \cite{Arl1987} it was proved that if
$T\in\wt C_H$, then
\begin{enumerate}
\item $
\ran(D_{T^n})=\ran(D_{T^{*n}})=D_{T_R}$ for all natural numbers $n$,
where $T_R=(T+T^*)/2$ is the real part of $T$,
\item
  the subspace $\sD_T$ reduces the operator $T$, and,
moreover, the operator $T\uphar\ker(D_T)$ is a selfadjoint and
unitary, and $T\uphar\sD_{T}$ is a completely nonunitary contraction
of the class $C_{00}$ \cite{SF}, i.e., $\lim\limits_{n\to\infty}T^n
f=\lim\limits_{n\to\infty}T^{*n} f=0$ for all $f\in\sD_{T}.$
\end{enumerate}
Let $T\in\wt C_H$. Then, clearly, the operators $I_H\pm T$ are
m-sectorial (bounded) operators. It follows that
$
I_H+T=(I_H+T_R)^{1/2}(I+iG)(I_H+T_R)^{1/2}, $ where $T_R=(T+T^*)/2$
is the real part of $T$, $G$ is a bounded selfadjoint operator in
the subspace $\cran(I_H+T_R)^{1/2}$, and $I$ is the identity
operator in $\cran(I_H+T_R)^{1/2}$. Let
\[
\bM=-I+2(I_H+T)^{-1}=\left\{\left\{(I_H+T)f,(I_H-T)f\right\},\; f\in
H\right\}.
\]
Then $\bM$ is m-sectorial linear relation, $\dom \bM=\ran(I_H+T)$.
The closed sectorial form $\bM[u,v]$ generated by $\bM$ can be
described now explicitly.

\begin{proposition}\label{clfrm}
The closed sectorial form associated with m-sectorial linear
relation $\bM$ is given by
\begin{equation}
\label{viraz2}
 \bM[u,v]=-(u,v)+2\left((I+iG)^{-1}(I_H+T_R)^{-1/2}u,(I_H+T_R)^{-1/2}v\right),
\end{equation}
for all $u,v\in\cD[\bM]=\ran(I_H+T_R)^{1/2}$.
\end{proposition}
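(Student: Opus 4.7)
The plan is to read the formula \eqref{viraz2} directly off the given factorization
\[
 I_H+T=(I_H+T_R)^{1/2}(I+iG)(I_H+T_R)^{1/2}
\]
by passing to the inverse and identifying the closed sectorial form of the resulting m-sectorial relation. Formally, inverting the factorization gives
\[
 (I_H+T)^{-1}=(I_H+T_R)^{-1/2}(I+iG)^{-1}(I_H+T_R)^{-1/2},
\]
where $(I_H+T_R)^{-1/2}$ denotes the (in general unbounded) selfadjoint inverse of $(I_H+T_R)^{1/2}$ on $\cran(I_H+T_R)^{1/2}$, with natural domain $\ran(I_H+T_R)^{1/2}$. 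Since $G$ is a bounded selfadjoint operator, $I+iG$ is bounded and boundedly invertible with $\RE(I+iG)^{-1}=(I+G^2)^{-1}\ge 0$, so $(I+iG)^{-1}$ is a bounded sectorial operator on $\cran(I_H+T_R)^{1/2}$, and the right-hand side of \eqref{viraz2} defines a closed sectorial form on $\ran(I_H+T_R)^{1/2}$.

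To confirm that this form coincides with $\bM[\cdot,\cdot]$, I would first verify the equality on the dense subset $\dom \bM=\ran(I_H+T)$. For $u=(I_H+T)f\in\dom\bM$ the algebraic identity $I_H-T=2I_H-(I_H+T)$ gives $(I_H-T)f=2f-u$, whence
\[
 \bM[u,v]=((I_H-T)f,v)=2(f,v)-(u,v)
\]
for any $v\in\cD[\bM]$. Substituting $f=(I_H+T)^{-1}u$ via the inverse factorization above and shifting one factor $(I_H+T_R)^{-1/2}$ to the right slot using its selfadjointness yields \eqref{viraz2} on $\dom \bM$. By uniqueness of the closed sectorial form associated with the m-sectorial relation $\bM$, the identity then extends from the core $\dom \bM$ to all of $\cD[\bM]=\ran(I_H+T_R)^{1/2}$.

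The main obstacle is the careful bookkeeping around the (generally unbounded) factor $(I_H+T_R)^{-1/2}$ when $I_H+T_R$ is not boundedly invertible, and the verification that $\ran(I_H+T)$ is a form core for the candidate form on $\ran(I_H+T_R)^{1/2}$. The latter reduces to showing that $(I+iG)\ran(I_H+T_R)^{1/2}$ is dense in $\cran(I_H+T_R)^{1/2}$ in the graph norm of $(I_H+T_R)^{-1/2}$, which follows from the boundedness and bijectivity of $I+iG$ on $\cran(I_H+T_R)^{1/2}$ together with the density of $\ran(I_H+T_R)^{1/2}$ in $\cran(I_H+T_R)^{1/2}$.
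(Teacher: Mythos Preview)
Your approach is correct and essentially the same as the paper's: both verify the identity on $\dom\bM=\ran(I_H+T)$ by the algebraic manipulation $(I_H-T)f=2f-(I_H+T)f$ together with the factorization of $I_H+T$, observe that the right-hand side defines a closed sectorial form on $\ran(I_H+T_R)^{1/2}$, and then argue that $\dom\bM$ is a form core for it. The paper carries out the core step explicitly by constructing an approximating sequence $\varphi_n=(I_H+T)h_n$ with $(I_H+T_R)^{1/2}h_n\to(I+iG)^{-1}h$ and checking $\tau[u-\varphi_n]\to 0$, which is precisely the concrete version of your density argument via the bijectivity of $I+iG$ on $\cran(I_H+T_R)^{1/2}$.
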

\begin{proof}
Let $g=(I_H+T)f$, $g'=(I_H-T)f$. Then $\{g,g'\}\in\bM$. With $u=g$
one gets
\[
\begin{array}{ll}
(\bM u,u)& =(g',g)=((I_H-T)f,(I_H+T)f)\\
&=-((I_H+T)f,(I_H+T)f)+2(f,(I_H+T)f)\\
&=-||u||^2+2((I_H+T)^{-1}u,u)\\
&=-||u||^2+2((I_H+T_R)^{-1/2}(I+iG)^{-1}(I_H+T_R)^{-1/2}u,u)\\
&=-||u||^2+2((I+iG)^{-1}(I_H+T_R)^{-1/2}u,(I_H+T_R)^{-1/2}u).
\end{array}
\]
It follows that the righthand side of \eqref{viraz2} coincides with
$\bM[u,v]$ for $u,v\in\dom\bM$.

Let $H_0=\cran(I_H+T)$. Then $\ran(I_H+T_R)^{1/2}$ is dense in
$H_0$. Denote
\[
\tau[u,v]=-||u||^2+2\left((I+iG)^{-1}(I_H+T_R)^{-1/2}u,(I_H+T_R)^{-1/2}v\right),
\]
with $u,v \in\ran(I_H+T_R)^{1/2}.$ Clearly, the form  $\tau$ is
closed and sectorial (with a vertex at the point $-1$ at least). Let
$u=(I_H+T_R)^{1/2}h$, $h\in H_0$, and choose a sequence
$\{h_n\}\subset H_0$ such that
$\lim\limits_{n\to\infty}(I_H+T_R)^{1/2}h_n=(I+iG)^{-1}h\in H_0$.
Then $\f_n=(I_H+T)h_n\in \dom \bM$ and
$\lim\limits_{n\to\infty}\f_n=(I+T_R)^{1/2}h=u$. Moreover,
\[
\begin{array}{l}
\tau[u-\f_n]\\
\quad =-||u-\f_n||^2 + 2\left((I+iG)^{-1}(I_H+T_R)^{-1/2}(u-\f_n),(I_H+T_R)^{-1/2}(u-\f_n)\right)\\
\quad =-||u-\f_n||^2 + 2\left((I+iG)^{-1}h-(I_H+T_R)^{1/2}h_n,
h-(I+iG)(I_H+T_R)^{1/2}h_n)\right).
\end{array}
\]
Hence $\lim\limits_{n\to\infty}\tau[u-\f_n]=0$. This shows that the
form $\tau$ is the closure of the form $(\bM\cdot,\cdot)$ and this
completes the proof.
\end{proof}

\subsection{Passive discrete-time systems and their
transfer functions}  \label{secS}

Let $\sM,\sN$, and $\sH$ be separable Hilbert spaces. A linear
system $\tau=\left\{\begin{bmatrix} D&C \cr
B&A\end{bmatrix};\sM,\sN, \sH\right\}$ with bounded linear operators
$A$, $B$, $C$, $D$  of the form
\[
 \left\{
 \begin{array}{l}
    \sigma_k=Ch_k+D\xi_k,\\
    h_{k+1}=Ah_k+B\xi_k
\end{array}
\right. \qquad k\in\dN_0,
\]
where $\{\xi_k\}\subset \sM$, $\{\sigma_k\}\subset \sN$,
$\{h_k\}\subset \sH$ is called a \textit{discrete time-invariant
system}. The Hilbert spaces $\sM$ and $\sN$ are called the input and
the output spaces, respectively, and the Hilbert space $\sH$ is
called the state space. Associated with $\tau$ is the block operator
\[
U=\begin{bmatrix} D&C \cr B&A\end{bmatrix} :
\begin{array}{l} \sM \\\oplus\\ \sH \end{array} \to
\begin{array}{l} \sN \\\oplus\\ \sH \end{array}.
\]
If $U$ is contractive, then the corresponding discrete-time system
is said to be \textit{passive} \cite{A}. If $U$ is unitary, then the
system is called conservative. The \textit{transfer function}
\[
\Theta_\tau(\lambda):=D+z C(I_{\sH}-z A)^{-1}B, \quad z \in \dD,
\]
of a passive system $\tau$ belongs to the \textit{Schur class} ${\bf
S}(\sM,\sN)$ \cite{A}. Recall that the Schur class ${\bf
S}(\sM,\sN)$ is the set of all holomorphic and contractive
$\bL(\sM,\sN)$-valued functions on the unit disk
$\dD=\{z\in\dC:|z|<1\}$.

Define the following subsets of the complex plane
\[
\begin{array}{l}
\Pi_{+}(\alpha):=\{z\in \dC:|z\sin\alpha+ i\cos\alpha|< 1\},\;
    \Pi_{-}(\alpha):=\{z\in \dC:|z\sin\alpha-
    i\cos\alpha|<1\},\\
\Pi(\alpha):=\Pi_{+}(\alpha)\cup\Pi_-(\alpha).
\end{array}
\]
Then, in particular
$
\Pi(0)=\dC\setminus\left((-\infty,1]\cup[1,+\infty)\right) $ and
$C(\alpha)=\Pi_+(\alpha)\cap\Pi_-(\alpha)$.
\begin{theorem}
\label{calsys}{\rm{\cite{Arl1991}}}. Suppose that $\sN=\sM$ and that
the operator
\[
U=\begin{bmatrix} D&C \cr B&A\end{bmatrix} :
\begin{array}{l} \sN \\\oplus\\ \sH \end{array} \to
\begin{array}{l} \sN \\\oplus\\ \sH \end{array}.
\]
belongs to class $C_{\sN\oplus\sH}(\alpha)$ for some $\alpha\in
[0,\pi/2)$. Then the function $\Theta_\tau$ possesses the following
properties:
\begin{enumerate}
\item
$\Theta_\tau$ is holomorphic in $\Pi(\alpha)$;
\item there exist strong non-tangential limits $\Theta_\tau(\pm 1)$
and $\Theta_\tau(\pm 1)\in C_\sN(\alpha)$;
\item the implications
\[
\begin{array}{l}
z\in\Pi_+(\alpha)\Longrightarrow
||\Theta_\tau(z)\sin\alpha+i\cos\alpha \,I_\sN||\le 1,\\
z\in\Pi_-(\alpha)\Longrightarrow
||\Theta_\tau(z)\sin\alpha-i\cos\alpha \,I_\sN||\le 1
\end{array}
\]
are valid. Therefore, $ z\in C(\beta)\Longrightarrow
\Theta_\tau(z)\in C_\sN(\beta)$ for each $\beta\in[\alpha,\pi/2).$
\end{enumerate}
\end{theorem}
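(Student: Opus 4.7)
The plan is to transform the problem to the unit disk via a M\"obius change of variable and reduce it to standard properties of Schur-class functions. The hypothesis $U\in C_{\sN\oplus\sH}(\alpha)$ supplies the two contractive block operators
\[
V_\pm := U\sin\alpha \pm i\cos\alpha\,I_{\sN\oplus\sH}
= \begin{bmatrix} D\sin\alpha\pm i\cos\alpha\,I_\sN & C\sin\alpha\\ B\sin\alpha & A\sin\alpha\pm i\cos\alpha\,I_\sH\end{bmatrix}
\]
on $\sN\oplus\sH$. Each $V_\pm$ is the block operator of a passive discrete-time system $\tau_\pm=\{V_\pm;\sN,\sN,\sH\}$, so the transfer function $\Theta_{\tau_\pm}$ belongs to $\bS(\sN,\sN)$ and, in particular, is holomorphic and contractive on $\dD$.

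The key algebraic input is the factorization
\[
I_\sH-w(A\sin\alpha\pm i\cos\alpha\,I_\sH)
\;=\;(1\mp iw\cos\alpha)(I_\sH-zA),\qquad z\,:=\,\frac{w\sin\alpha}{1\mp iw\cos\alpha},
\]
from which the intertwining identity
\[
\Theta_{\tau_\pm}(w)\;=\;\sin\alpha\,\Theta_\tau(z)\pm i\cos\alpha\,I_\sN
\]
follows by a direct calculation using $B_\pm=B\sin\alpha$ and $C_\pm=C\sin\alpha$. A short computation also yields
\[
1-|z\sin\alpha\pm i\cos\alpha|^2
\;=\;\frac{\sin^2\alpha\,(1-|w|^2)}{|1\mp iw\cos\alpha|^2},
\]
so $w\mapsto z$ is a biholomorphism from $\dD$ onto $\Pi_\pm(\alpha)$. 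Together with the Schur-class properties of $\Theta_{\tau_\pm}$, these two formulas immediately deliver (1) (via $\Pi(\alpha)=\Pi_+(\alpha)\cup\Pi_-(\alpha)$) and the two norm bounds in (3); the final assertion of (3) is then immediate since $C(\beta)\subseteq\Pi_+(\alpha)\cap\Pi_-(\alpha)$ for every $\beta\ge\alpha$.

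For the strong non-tangential limits in (2), the points $z=\pm 1\in\partial\Pi(\alpha)$ correspond under the M\"obius maps to the specific unimodular points $\pm(\sin\alpha\mp i\cos\alpha)\in\partial\dD$. Since the generic Fatou-type theorem for operator-valued Schur functions yields boundary values only almost everywhere, one must exploit the finer structure of $U$. From Section~2, $A\in C_\sH(\alpha)\subseteq\wt C_\sH$ splits as $A_0\oplus A_s$, where $A_0$ is selfadjoint and unitary on $\ker D_A$ (with spectrum in $\{\pm 1\}$ since $\alpha<\pi/2$) and $A_s$ is a $C_{00}$-contraction on $\sD_A$. The contractivity of $U$ forces $Ch=0$ whenever $Ah=\pm h$ and $B^*h=0$ whenever $A^*h=\pm h$, so the unimodular spectral subspaces of $A$ are invisible to $B$ and $C$. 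Combining this cancellation with the Abel-type strong limit $(1\mp z)(I\mp zA)^{-1}\to P_{\ker(I\mp A)}$ valid in any Stolz angle absorbs the only possibly singular contribution of $(I-zA)^{-1}$ near $z=\pm 1$ and yields the required strong non-tangential limits $\Theta_\tau(\pm 1)$. The membership $\Theta_\tau(\pm 1)\in C_\sN(\alpha)$ then follows by passing to the limit in (3).

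\emph{Main obstacle.} The algebraic part of the argument (intertwining, conformal correspondence, Schur bound) is routine. The delicate step is (2): Fatou-type theorems alone do not guarantee boundary values at the specific points $z=\pm 1$, and one must combine the canonical decomposition of operators in $C_\sH(\alpha)$ with the contractive-cancellation relations imposed on $B$ and $C$ by the full block operator $U$.
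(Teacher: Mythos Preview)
The paper does not prove this theorem: it is quoted from \cite{Arl1991} and stated without proof, so there is no in-paper argument to compare against. Your task was therefore to reconstruct a proof, and I will assess the proposal on its own merits.

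Your treatment of (1) and (3) is correct and efficient. The intertwining identity
\[
\Theta_{\tau_\pm}(w)=\sin\alpha\,\Theta_\tau(z)\pm i\cos\alpha\,I_\sN,\qquad z=\frac{w\sin\alpha}{1\mp iw\cos\alpha},
\]
together with the conformal correspondence $\dD\leftrightarrow\Pi_\pm(\alpha)$, reduces everything to the Schur-class property of $\Theta_{\tau_\pm}$. One small omission: the argument as written assumes $\alpha>0$ (you divide by $\sin\alpha$); the case $\alpha=0$ is the selfadjoint situation and should be mentioned separately, but it is elementary.

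The sketch for (2) is in the right direction but is not yet a proof. You correctly extract from $U\in C_{\sN\oplus\sH}(\alpha)$ that $A\in C_\sH(\alpha)$, that $A$ splits as a selfadjoint unitary part on $\ker D_A$ plus a $C_{00}$ part on $\sD_A$, and that $C$ and $B$ vanish on $\ker(I\mp A)$ and $\ker(I\mp A^*)$ respectively. This indeed localizes $zC(I-zA)^{-1}B$ to the $C_{00}$ summand. However, the final sentence (``the Abel-type limit absorbs the only possibly singular contribution'') does not follow: knowing $(1\mp z)(I\mp zA)^{-1}\to P_{\ker(I\mp A)}$ and $CP_{\ker(I\mp A)}=0$ controls only the leading singular term, not the full behaviour of $C(I-zA_s)^{-1}B$ as $z\to\pm 1$. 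A $C_{00}$ contraction can have spectrum accumulating at $\pm1$, so $(I-zA_s)^{-1}$ need not stay bounded. What is missing is the factorizations $C=C_0D_A$, $B=D_{A^*}B_0$ forced by contractivity of $U$, together with the range equality $\ran D_A=\ran D_{A^*}$ valid for $A\in\wt C_\sH$; these let you replace $C(I-zA)^{-1}B$ by an expression involving $D_A(I-zA)^{-1}D_{A^*}$, whose Stolz-angle limits can then be controlled using the $C_\sH(\alpha)$ structure. You have identified the obstacle correctly but have not actually surmounted it.
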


A particular case is \textit{self-adjoint passive system}, i.e., the
case when $\alpha=0$ $\iff$ the operator $U=\begin{bmatrix} D&C \cr
B&A\end{bmatrix}$ is a self-adjoint contraction in $\sN\oplus\sH$.

A more general class of passive systems is formed by \textit{passive
quasi-selfadjoint systems} ($pqs$-systems for short). The passive
system
$$\tau=\left\{\begin{bmatrix} D&C \cr B&A\end{bmatrix};\sN,\sN,\sH\right\}$$
is called a $pqs$-system if the operator
$
U=\begin{bmatrix} D&C \cr B&A\end{bmatrix}
$
is a \textit{quasi-selfadjoint contraction} ($qsc$-operator for
short), i.e., $U$ is a contraction and $\ran (U-U^*)\subset
\sN\times\{0\}$, cf. \cite{AHS2}. This last condition alone is
equivalent to $A=A^*$ and $C=B^*$; for contractivity of $U$ see
Theorem~\ref{qscex} below. If $\tau$ is a $pqs$-system, then the
transfer function  of $\tau$ takes the form
\[
\Theta_\tau(z)=W(z)+D,
\]
where the function $W(z)$ belongs to the class $\mathbf{N}(\sN) $ of
Herglotz-Nevanlinna functions and it is defined on the domain
$\Ext\{(-\infty,-1]\cup [1,\infty)\}$. The class ${\bf S}^{qs}(\sN)$
is the class of all transfer functions of $pqs$-systems
$\tau=\{U;\sN,\sN,\sH\}$. A complete description of the class  ${\bf
S}^{qs}(\sN)$ is given in \cite{AHS3}. Denote by ${\bf S}^{s}(\sN)$
the subset of Herglotz-Nevanlinna functions from the class of ${\bf
S}^{qs}(\sN)$.  Clearly,
\[
\Theta(z)\in {\bf S}^{s}(\sN)\iff\left\{\begin{array}{l} \Theta(z)\in {\bf S}^{qs}(\sN),\\
\Theta(0)=\Theta^*(0)\end{array}\right. .
\]
The following equivalent statements for $\bL(\sN)$-valued
Herglotz-Nevanlinna function $\Theta$, holomorphic in
$\dC\setminus\left\{(-\infty,-1]\cup [1,\infty)\right\}$, can be
derived with the aid of the integral representation of $\Theta$; see
also \cite[Theorem 4.2]{KrO}:
\begin{enumerate}
\item $\Theta \in {\bf S}^{s}(\sN)$;
\item $\Theta(x)$ is selfadjoint contraction for each $x\in(-1,1)$;
\item $\Theta$ is the transfer function of a passive selfadjoint discrete-time system
$$\tau=\left\{\begin{bmatrix} D&B \cr B^*&A\end{bmatrix};\sN,\sN,\sH\right\}.$$
\end{enumerate}
\subsection{The Schur-Frobenius formula for the resolvent.}
Let
\[
\cU=\begin{bmatrix} D&C \cr B&A\end{bmatrix} :
\begin{array}{l} \sM \\\oplus\\ \sH \end{array} \to
\begin{array}{l} \sM \\\oplus\\ \sH \end{array}.
\]
be a bounded block operator. Then an applications of the
Schur-Frobenius formula gives the following formula for the
resolvent $R_\cU(\lambda)=(\cU-\lambda I)^{-1}$ of $\cU$:
\begin{equation}
\label{Sh-Fr1}
\begin{array}{l}
R_\cU(\lambda)=
\begin{bmatrix}-V^{-1}(\lambda)&V^{-1}(\lambda)CR_A(\lambda)\cr
R_A(\lambda)BV^{-1}(\lambda)&R_A(\lambda)\left(I_\cH-BV^{-1}(\lambda)CR_A(\lambda)\right)
\end{bmatrix},
\quad \lambda\in\rho(\cU)\cap\rho(A),
\end{array}
\end{equation}
where
\begin{equation}
\label{VT}V(\lambda):=\lambda I_\sM-D+CR_A(\lambda)B,\;
\lambda\in\rho(A).
\end{equation}
Moreover, $\lambda\in\rho(\cU)\cap\rho(A)\iff
V^{-1}(\lambda)\in\bL(\sM)$. In particular, \eqref{Sh-Fr1} and
\eqref{VT} imply
\begin{equation}
\label{inv11} \left(P_\sM R_U(\lambda)\uphar\sM\right)^{-1}
=D-CR_A(\lambda)B-\lambda I_\sM.
\end{equation}

\subsection{Kre\u\i n shorted operators}

For every bounded nonnegative operator $\cS$ in the Hilbert space
$\cH$ and every subspace $\cK\subset \cH$ M.G.~Kre\u{\i}n \cite{Kr}
defined the operator $\cS_{\cK}$ by the relation
\[
 \cS_{\cK}=\max\left\{\,\cZ\in \bL(\cH):\,
    0\le \cZ\le \cS, \, {\ran}\cZ\subseteq{\cK}\,\right\}.
\]
An equivalent description is
\begin{equation}\label{Sh1}
 \left(\cS_{\cK}f, f\right)=\inf\limits_{\f\in \cK^\perp}\left\{\left(\cS(f + \varphi),f +
 \varphi\right)\right\},
\quad  f\in\cH,
\end{equation}
where $\cK^\perp:=\cH\ominus{\cK}$. The properties of $\cS_{\cK}$,
have been studied by M.G.~Kre\u{\i}n and by other authors (see
\cite{ARL1} and references therein): in \cite{And,AT} $\cS_{\cK}$ is
called a \textit{shorted operator}. The following representation of
$\cS_{\cK}$ was also established in \cite{Kr}:
\[
 \cS_{\cK}=\cS^{1/2}P_{\Omega}\cS^{1/2},
\]
 where $P_{\Omega}$ is the orthogonal projection in
$\cH$ onto
$
 \Omega=\{\,f\in \cran \cS:\,\cS^{1/2}f\in {\cK}\,\}=\cran \cS\ominus
\cS^{1/2}\cK^\perp. $ Moreover, it was shown in \cite{Kr} that
\begin{equation}\label{Sh2}
 {\ran}\cS_{\cK}^{1/2}={\ran}(\cS^{1/2}P_\Omega)={\cK}\cap{\ran}\cS^{1/2}.
\end{equation}
Hence,
\begin{equation}
\label{nol}
  \cS_{\cK}=0 \iff  \ran \cS^{1/2}\cap \cK=\{0\}.
\end{equation}

As a bounded selfadjoint operator $\cS$ admits the block operator
representation
\[
\cS=\begin{bmatrix}\cS_{11}&\cS_{12}\cr \cS^*_{12}&\cS_{22}
\end{bmatrix}:\begin{array}{l}\cK\\\oplus\\\cK^\perp \end{array}\to
\begin{array}{l}\cK\\\oplus\\\cK^\perp \end{array}.
\]
It is well known (see \cite{HMS,KrO,S59}) that the operator $\cS$ is
nonnegative if and only if
\[
\cS_{22}\ge 0,\; \ran \cS^*_{12}\subset\ran \cS^{1/2}_{22},\,\;
\cS_{11}\ge
\left(\cS^{-1/2}_{22}\cS^*_{12}\right)^*\left(\cS^{-1/2}_{22}\cS^*_{12}\right)
\]
and the operator $\cS_\cK$ can be expressed in the block operator
form
\begin{equation}
\label{shormat1}
\cS_\cK=\begin{bmatrix}\cS_{11}-\left(\cS^{-1/2}_{22}\cS^*_{12}\right)^*\left(\cS^{-1/2}_{22}\cS^*_{12}\right)&0\cr
0&0\end{bmatrix},
\end{equation}
where $\cS^{-1/2}_{22}$ is the Moore-Penrose pseudo-inverse of
$\cS_{22}$. If $\cS^{-1}_{22}\in\bL(\cK^\perp)$ then
\[
\cS_\cK=\begin{bmatrix}\cS_{11}-\cS_{12}\cS^{-1}_{22}\cS^*_{12}&0\cr
0&0\end{bmatrix}
\]
and $\cS_{11}-\cS_{12}\cS^{-1}_{22}\cS^*_{12}$ is called a
\textit{Schur complement} of $\cS$. From \eqref{shormat1} it follows
that
\[
\cS_\cK=0\iff \ran \cS^*_{12}\subset\ran
\cS^{1/2}_{22}\quad\mbox{and}\quad
\cS_{11}=\left(\cS^{-1/2}_{22}\cS^*_{12}\right)^*\left(\cS^{-1/2}_{22}\cS^*_{12}\right).
\]

\subsection{Selfadjoint and quasi-selfadjoint contractive extensions of a nondensely defined Hermitian contraction}
\label{QSCEX} Let $B$ be a closed nondensely defined Hermitian
contraction in the Hilbert space $\sH$. Denote
\[
\sH_0:=\dom B,\; \sN:=\sH\ominus\sH_0.
\]
A description of all selfadjoint contractive extensions
($sc$-extensions \cite{KrO}) of $B$ in $\sH$ was given by
M.G.~Kre\u{\i}n \cite{Kr}. In fact, he showed that all
$sc$-extensions of $B$ form an operator interval $[B_\mu, B_M]$,
where the extensions $B_\mu$ and $B_M$ can be characterized by
\begin{equation}
\label{extr} \left(I+B_\mu\right)_{\sN}=0, \quad
\left(I-B_M\right)_{\sN}=0,
\end{equation}
respectively. The operator $B$ admits a unique $sc$-extension if and
only if
\[
\sup\limits_{\f\in\dom B
}\cfrac{|(B\f,h)|^2}{||\f||^2-||B\f||^2}=\infty
\]
for all $h\in\sN\setminus\{0\}.$

The operator interval $[B_\mu, B_M]$ can be described as follows
(cf. \cite{Kr,KrO}):
\begin{equation}\label{param1}
 \wh{B} = (B_M + B_\mu)/2 + (B_M - B_\mu)^{1/2} Y (B_M - B_\mu)^{1/2} /2,
\end{equation}
where $Y=Y^*$ is a contraction in the subspace $\cran(B_M -
B_{\mu})\subseteq{\sN}$. It follows from \eqref{extr}, for instance,
that for every $sc$-extension $\wh B$ of $B$ the following
identities hold:
\begin{equation}
\label{rn1}
 (I-\wh B)_{\sN}=B_M-\wh B,
\quad
 (I+\wh B)_{\sN} =\wh B-B_\mu,
\end{equation}
cf. \cite{Kr}. Hence, according to \eqref{Sh2}
\[
\begin{array}{l}
 \ran(I-\wh B)^{1/2}\cap{\sN}=\ran(B_M-\wh B)^{1/2},\\
 \ran(I+\wh B)^{1/2}\cap{\sN}=\ran(\wh B-B_\mu)^{1/2}.
\end{array}
\]
Let $P_{\sH_0}$ and $P_{\sN}$ be the orthogonal projections in $\sH$
onto $\sH_0$ and $\sN$, respectively. Then the operator
$B_0=P_{\sH_0} B$ is contractive and self-adjoint in the subspace
$\sH_0$. Let $D_{B_0}= (I-B^2_0)^{1/2}$ be the defect operator
determined by $B_0$. The operator $B_{21}=P_{\sN}B$ is also
contractive. Moreover, it follows from $B^*B\le I$ that
$B_{21}^*B_{21}\le D_{B_0}^2$. Therefore, the identity
\[
 K_0 D_{B_0}f=P_{\sN}Bf, \; f\in \dom B=\sH_0,
\]
defines a contractive operator $K_0$ from
$\sD_{B_0}:=\cran(D_{B_0})$ into $\sN$, cf.~\cite{Doug,FW}. This
gives the following decomposition for the Hermitian contraction $B$
\begin{equation}
\label{HC}
 B=B_0+K_0 D_{B_0}=\begin{bmatrix} B_0 \cr K_0D_{B_0}
 \end{bmatrix}:\sH_0\to\sH.
\end{equation}
An extension $\wh B$ of $B$ in $\sH$ is called
\textit{quasi-selfadjoint} if also $\wh B^*$ is an extension of $B$
and $\wh B$ is said to be a \textit{quasi-selfadjoint contractive}
extension of $B$ ($qsc$-extension for short) if $\dom\wh B =\sH$,
$||\wh B||\le 1$, and $\ker (\wh B-\wh B^*)\supseteq\dom B=\sH_0$;
cf. \cite{ArTs00,ArTs0}.

For a proof of the following result and some history behind the
well-known formula therein; see
\cite[Theorem~9.2.3]{ArlBelTsek2011}, \cite[Theorem~4.1]{AHS2007},
\cite[Corollary~3.5]{HMS}.

\begin{theorem}
\label{qscex} Let $B$ be a Hermitian contraction in
$\sH=\sH_0\oplus\sN$ with $\dom B=\sH_0$ and decompose $B$ as in
\eqref{HC}. Then the formula
\begin{equation}
\label{MCE}
 \wh B=\begin{bmatrix} B_0& D_{B_0}K^*_0 \cr
           K_0D_{B_0}&  -K_0B_0K^*_0+D_{K^*_0}XD_{K^*_0}\end{bmatrix}:
\begin{array}{l}\sH_0\\\oplus\\\sN  \end{array}
 \to
\begin{array}{l}\sH_0\\\oplus\\\sN  \end{array}
\end{equation}
gives a one-to-one correspondence between all $qsc$-extensions $\wh
B$ of the Hermitian contraction $B=B_0+K_0D_{B_0}$ and all
contractions $X$ in the subspace
$\sD_{K_0^*}:=\cran(D_{K_0^*})\subseteq \sN$. Furthermore, the
following statements hold:
\begin{enumerate}
\def\labelenumi{\rm (\roman{enumi})}

\item $B$ has a unique $sc$-extension if and only if
$K^*_0$ is an isometry ($\sD_{K^*_0}=\{0\}$);

\item if $\sD_{K^*_0}\ne\{0\}$, then the following equivalences hold
\[
\ker D_{K_0^*}=\{0\}\iff\ker(B_M-B_\mu)=\sH_0\iff S_F\cap S_K=S;
\]

\item if $\sD_{K^*_0}\ne\{0\}$, then the following equivalences hold
\[
\ran D_{K_0^*}=\sN \iff \ran(B_M-B_\mu)=\sN\iff S_F\hplus S_K=S^*.
\]
\end{enumerate}

Moreover, $\wh B\in C_\sH(\alpha)$, $\alpha\in[0,\pi/2),$ if and
only if $X\in C_{\sD_{K^*_0}}(\alpha).$
\end{theorem}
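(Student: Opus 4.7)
The proof proceeds in three stages.

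\textbf{Block form and parametrization.} Writing $\wh B$ as a $2\times 2$ operator matrix relative to $\sH=\sH_0\oplus\sN$, the extension property $\wh B\supseteq B$ together with \eqref{HC} forces the first column to be $\left(\begin{smallmatrix} B_0\\ K_0D_{B_0}\end{smallmatrix}\right)$. The condition $\ker(\wh B-\wh B^*)\supseteq\sH_0$ is equivalent to $\wh B^*\supseteq B$, so applying the same argument to $\wh B^*$ pins down the $(1,2)$-entry as $D_{B_0}K_0^*$. Hence every $qsc$-extension has the form
\[
\wh B=\begin{bmatrix} B_0 & D_{B_0}K_0^*\cr K_0D_{B_0} & Z\end{bmatrix}, \qquad Z\in\bL(\sN),
\]
with no further algebraic restriction on $Z$, since the first column of $\wh B-\wh B^*$ vanishes automatically. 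The contractivity $\|\wh B\|\le 1$ is then resolved by the classical Shmul'yan contractive-completion theorem applied to this block matrix: the contractive completions are exactly those $Z$ of the form $Z=-K_0B_0K_0^*+D_{K_0^*}XD_{K_0^*}$ with $X$ a contraction in $\sD_{K_0^*}$, and $X$ is uniquely determined by $Z$. I would verify this by computing $I_\sH-\wh B^*\wh B$ in block form and using $B_0D_{B_0}=D_{B_0}B_0$ together with $K_0D_{B_0}^2K_0^*=K_0K_0^*-K_0B_0^2K_0^*=I_\sN-D_{K_0^*}^2-K_0B_0^2K_0^*$.

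\textbf{Statements (i)--(iii).} Statement (i) is immediate from the one-to-one correspondence: a unique $sc$-extension exists iff the selfadjoint-contraction parameter space is trivial, iff $\sD_{K_0^*}=\{0\}$, iff $K_0K_0^*=I_\sN$. For (ii)--(iii), I would identify the extremal $sc$-extensions $B_\mu,B_M$ by computing $(I\pm\wh B)_\sN$ from the block form via \eqref{shormat1} and imposing \eqref{extr}; this shows that $B_\mu$ corresponds to $X=-I_{\sD_{K_0^*}}$ and $B_M$ to $X=I_{\sD_{K_0^*}}$. Consequently
\[
B_M-B_\mu=\begin{bmatrix} 0&0\cr 0&2D_{K_0^*}^2\end{bmatrix},
\]
which yields $\ker(B_M-B_\mu)=\sH_0\iff\ker D_{K_0^*}=\{0\}$ and, using that $\ran D_{K_0^*}=\ran D_{K_0^*}^2$ whenever either is closed (valid here by the polar decomposition of the bounded selfadjoint $D_{K_0^*}$), also $\ran(B_M-B_\mu)=\sN\iff\ran D_{K_0^*}=\sN$. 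The further equivalences with $S_F\cap S_K=S$ and $S_F\hplus S_K=S^*$ then follow by translating $B_\mu$ and $B_M$ into the Cayley transforms $S_F,S_K$ via Section~\ref{FLTr}.

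\textbf{The sectorial characterization.} By \eqref{CA1}, $\wh B\in C_\sH(\alpha)$ is equivalent to the simultaneous contractivity of the two operators $\wh B\sin\alpha\pm i\cos\alpha I_\sH$. Each of these is again a $2\times 2$ block operator with the same off-diagonal structure as $\wh B$ (scaled by $\sin\alpha$), with $(1,1)$-entry $B_0\sin\alpha\pm i\cos\alpha I_{\sH_0}$ (a contraction because the selfadjoint contraction $B_0$ lies in $C_{\sH_0}(\alpha)$), and with $(2,2)$-entry $Z\sin\alpha\pm i\cos\alpha I_\sN$. Applying the Shmul'yan parametrization from the first stage to each of these two completions, one sees that contractivity of $\wh B\sin\alpha\pm i\cos\alpha I_\sH$ is equivalent to contractivity of the corresponding completion parameters, which direct computation identifies with $X\sin\alpha\pm i\cos\alpha I_{\sD_{K_0^*}}$; combining the two gives $\wh B\in C_\sH(\alpha)\iff X\in C_{\sD_{K_0^*}}(\alpha)$. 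The main technical obstacle in this step is the bookkeeping needed to verify that the defect operator of the shifted column of $\wh B\sin\alpha\pm i\cos\alpha I_\sH$ reproduces the correct defect $D_{K_0^*}$ in the completion formula with parameter $X\sin\alpha\pm i\cos\alpha I_{\sD_{K_0^*}}$. An alternative route exploits the Cayley-transform characterization noted after \eqref{CA2}, namely $\wh B\in C_\sH(\alpha)$ iff $(I-\wh B^*)(I+\wh B)$ is $\alpha$-sectorial, combined with the image of the parametrization \eqref{MCE} under the Cayley transform.
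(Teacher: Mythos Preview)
The paper does not prove this theorem; it cites \cite[Theorem~9.2.3]{ArlBelTsek2011}, \cite[Theorem~4.1]{AHS2007}, and \cite[Corollary~3.5]{HMS} for the argument. Your approach via the Shmul'yan contractive-completion parametrization is precisely the standard one in those references and is correct. For stage~3, where you hedge, the bookkeeping in fact goes through cleanly once you use the general (non-selfadjoint) completion formula $D=-NA^*M+D_{N^*}YD_M$: with $A=B_0\sin\alpha\pm i\cos\alpha$ one has $D_A=D_{A^*}=\sin\alpha\,D_{B_0}$, so the column and row parameters remain $N=K_0$, $M=K_0^*$; since $-K_0A^*K_0^*=-K_0B_0K_0^*\sin\alpha\pm i\cos\alpha\,K_0K_0^*$ and $I_\sN-K_0K_0^*=D_{K_0^*}^2$, the $(2,2)$-identity collapses to $D_{K_0^*}YD_{K_0^*}=D_{K_0^*}(X\sin\alpha\pm i\cos\alpha)D_{K_0^*}$, giving $Y=X\sin\alpha\pm i\cos\alpha$ exactly as you claim.
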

From  \eqref{MCE} it follows that
\begin{equation}
\label{ENDS} B_\mu=\begin{bmatrix} B_0& D_{B_0}K^*_0 \cr
K_0D_{B_0}&-K_0B_0K^*_0-D^2_{K^*_0}\end{bmatrix}, \quad
B_M=\begin{bmatrix} B_0& D_{B_0}K^*_0 \cr
K_0D_{B_0}&-K_0B_0K^*_0+D^2_{K^*_0}\end{bmatrix}
\end{equation}
with $X=-I\uphar\sD_{K^*_0}$ and $X=I\uphar\sD_{K^*_0}$,
respectively. From \eqref{ENDS} it is seen that
\[
\frac{B_\mu+B_M}{2}=\begin{bmatrix} B_0& D_{B_0}K^*_0
   \cr K_0D_{B_0}&-K_0B_0K^*_0\end{bmatrix},
\quad
  \frac{B_M-B_\mu}{2}=\begin{bmatrix} 0&0\cr 0&D^2_{K^*_0}\end{bmatrix}.
\]
Finally, we mention the following implications
\begin{equation}\label{XtoB}
\begin{array}{l}
X\in\bL(\sD_{K^*_0}),\; ||X\sin\alpha+i\cos\alpha ||\le
1\Longrightarrow ||\wh B\sin\alpha+i\cos\alpha||\le 1,\\
X\in\bL(\sD_{K^*_0}),\; ||X\sin\alpha-i\cos\alpha ||\le
1\Longrightarrow||\wh B\sin\alpha-i\cos\alpha ||\le 1,
\end{array}
\end{equation}
where $\wh B$ is given by \eqref{MCE}.

\begin{remark}
\label{parr} Let $X$ be a selfadjoint contraction in the Hilbert
space $\cH_1\oplus\cH_2$. From Theorem \ref{qscex} one can derive
the following two block representations for $X$:
\begin{multline*}
 X =\begin{bmatrix} X_{11}& D_{X_{11}}L^*\cr LD_{X_{11}}&
     -LX_{11}L^*+D_{L^*}YD_{L^*}\end{bmatrix}\\
  =\begin{bmatrix} -UX_{22}U^*+D_{U^*}VD_{U^*}&
UD_{X_{22}}\cr
D_{X_{22}}U^*&X_{22}\end{bmatrix}:\begin{array}{l}\cH_1\\\oplus\\\cH_2\end{array}\to
\begin{array}{l}\cH_1\\\oplus\\\cH_2\end{array},
\end{multline*}
where $L\in\bL(\sD_{X_{11}},\cH_2)$ and
$U\in\bL(\sD_{X_{22}},\cH_1)$ are contractions and
$Y\in\bL(\sD_{L^*})$ and $V\in\bL(\sD_{U^*})$ are selfadjoint
contractions. From \eqref{extr}, \eqref{rn1}, and \eqref{ENDS} we
get
\begin{multline*}
(I+X)_{\cH_2}=D_{L^*}(I+Y)D_{L^*}P_{\cH_2},\;
(I-X)_{\cH_2}=D_{L^*}(I-Y)D_{L^*}P_{\cH_2},\\
(I+X)_{\cH_1}=D_{U^*}(I+V)D_{U^*}P_{\cH_1},\;(I-X)_{\cH_1}=D_{U^*}(I-V)D_{U^*}P_{\cH_1},\\
(I-X)_{\cH_ 1}=(I+X)_{\cH_1}=0 \iff UU^*=I_{\cH_1}\Longrightarrow
X=\begin{bmatrix} -UX_{11}U^*& UD_{X_{22}}\cr
D_{X_{22}}U^*&X_{22}\end{bmatrix},
\end{multline*}
\begin{multline*}
(I-X)_{\cH_2}=(I+X)_{\cH_2}=0 \iff LL^*=I_{\cH_2}\Longrightarrow
X=\begin{bmatrix} X_{11}& D_{X_{11}}L^*\cr LD_{X_{11}}&
-LX_{11}L^*\end{bmatrix}.
\end{multline*}
In addition, for the defect operators the following identities hold
(cf. \cite[Theorem~4.1]{AHS2007}):
\[
\begin{array}{rcl}
 \left\|D_X\begin{pmatrix}h_1\cr h_2\end{pmatrix}\right\|^2
 &=&\left\|D_L\left(D_{X_{11}}h_1-X_{11}L^* h_2\right)
 -L^*YD_{L^*}h_2\right\|^2+\left\|D_YD_{L^*}h_2\right\|^2 \\
 &=&\left\|D_U\left(D_{X_{22}}h_2-X_{22}U^*h_1\right)
 -U^*VD_{U^*}h_1\right\|^2+\left\|D_VD_{U^*}h_1\right\|^2.
\end{array}
\]
\end{remark}

\subsection{Special pairs of selfadjoint contractive extensions and corresponding $Q$- functions}

The so-called $Q_\mu$ and $Q_M$-functions of a Hermitian contraction
$B$ of the form
\[
\begin{array}{l}
Q_\mu(\xi)=\left(I_{\sN}+(B_M- B_\mu)^{1/2}\left(
 B_\mu-\xi I_\sH\right)^{-1}(B_M- B_\mu)^{1/2}\right)\uphar\sN,\\
 Q_M(\xi)=\left(-I_{\sN}+(B_M- B_\mu)^{1/2}\left(
 B_M-\xi I_\sH\right)^{-1}(B_M- B_\mu)^{1/2}\right)\uphar\sN
,\;\xi\in\dC\setminus[-1,1],
 \end{array}
\]
were introduced and studied in \cite{KrO}. These functions belong to
the Herglotz-Nevanlinna class and they are connected to each other
via
\[
 Q_\mu(\xi)Q_M(\xi)=-I_\sN, \quad \xi\in\dC\setminus[-1,1].
\]
They possess the following further properties:
\[
\begin{array}{l}
 s-\lim\limits_{\xi\to\infty} Q_\mu(\xi)=I;\,\,
 \lim\limits_{\xi\uparrow-1}(Q_\mu(\xi)h,h)=+\infty \forall
 h\in\sN\setminus\{0\};\,\,
  s-\lim\limits_{\xi\downarrow 1} Q_\mu(\xi)=0;\\
  s-\lim\limits_{\xi\to\infty} { Q}_M(\xi)=-I_\sN;\,\,
     \lim\limits_{\xi\downarrow1}({Q}_M(\xi)h,h)=-\infty
   \forall  h\in\sN\setminus\{0\};\,\,
  s-\lim\limits_{\xi\uparrow -1} {Q}_M(\xi)=0.
\end{array}
\]
The following resolvent formula has been established in \cite{KrO}.

\begin{theorem}
\label{kror} Let $C=B_M-B_\mu$. The formula
\[
\wt R_\xi=(B_\mu-\xi I)^{-1} -(B_\mu-\xi
I)^{-1}C^{1/2}K(\xi)\left(I+(Q_\mu(\xi)-I)K(\xi)\right)^{-1}C^{1/2}(B_\mu-\xi
I)^{-1}
\]
gives a bijective correspondence between the generalized resolvents
$\wt R_\xi=P_\sH(\wt B-\xi I)^{-1}\uphar\sH$ of $sc$-extensions $\wt
B$ of $B$ with exit and the $\bL(\sN)$-valued operator functions
$K(\xi)$ holomorphic on $\Ext[-1,1]$ and possessing the following
two further properties:

1) $-K(\xi)$ is a Herglotz-Nevanlinna function,

2) $K(\xi)$ is a nonnegative selfadjoint contraction for every
$\xi\in\dR\setminus [-1,1]$.

\noindent Here canonical resolvents correspond to constant functions
$K(\xi)=K$ and vice versa.
\end{theorem}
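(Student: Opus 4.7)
The plan is to derive the resolvent formula by block-decomposing the exit extension $\wt B$, applying the Schur--Frobenius formula from Subsection~2.3, and then performing a Woodbury-type inversion; the parameter $K(\xi)$ is then identified through a sandwich factorization via $\tilde C^{1/2}$, the square root of $C=B_M-B_\mu$ viewed as an operator on $\sN$. The converse direction proceeds by realizing the resulting Herglotz-Nevanlinna operator function via the passive selfadjoint systems of Subsection~\ref{secS}.

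First, I would write an arbitrary $sc$-extension $\wt B$ of $B$ with exit in $\sH\oplus\cH$ in block form
$$\wt B=\begin{bmatrix}\hat B & F\\ F^* & A\end{bmatrix}:\sH\oplus\cH\to\sH\oplus\cH.$$
The extension condition $\wt B\uphar\sH_0=B$ forces $\hat B:=P_\sH\wt B\uphar\sH$ to be itself an $sc$-extension of $B$ in $\sH$ (hence $B_\mu\le\hat B\le B_M$) and $F^*\uphar\sH_0=0$, i.e.\ $\ran F\subseteq\sN$. Formula~\eqref{inv11} then yields
$$\wt R_\xi^{-1}=\hat B-\xi I_\sH-F(A-\xi I_\cH)^{-1}F^*=(B_\mu-\xi I_\sH)+\Gamma(\xi),$$
with $\Gamma(\xi):=(\hat B-B_\mu)-F(A-\xi I_\cH)^{-1}F^*$ having range in $\sN$ and vanishing on $\sH_0$, so that $\Gamma(\xi)=\iota_\sN\gamma(\xi)\iota_\sN^*$ for a unique $\gamma(\xi)\in\bL(\sN)$, where $\iota_\sN:\sN\hookrightarrow\sH$. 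A Woodbury inversion then gives
$$\wt R_\xi=R_\mu(\xi)-R_\mu(\xi)\,\iota_\sN\bigl[\gamma(\xi)^{-1}+\iota_\sN^*R_\mu(\xi)\iota_\sN\bigr]^{-1}\iota_\sN^*R_\mu(\xi),\qquad R_\mu(\xi):=(B_\mu-\xi I)^{-1}.$$
Since $C$ vanishes on $\sH_0$ one has $C^{1/2}=\iota_\sN\tilde C^{1/2}\iota_\sN^*$ and consequently $Q_\mu(\xi)-I=\tilde C^{1/2}\iota_\sN^*R_\mu(\xi)\iota_\sN\tilde C^{1/2}$ on $\sN$. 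A direct algebraic comparison with the formula of the theorem then shows that the two expressions coincide precisely when
$$\gamma(\xi)=\tilde C^{1/2}\,K(\xi)\,\tilde C^{1/2}.$$

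Properties of $K(\xi)$ are now read off from those of $\gamma(\xi)$. Property~(1) (that $-K$ is Herglotz-Nevanlinna) follows from $\IM\bigl[F(A-\xi I)^{-1}F^*\bigr]=\IM\xi\cdot F|A-\xi I|^{-2}F^*\ge 0$ for $\IM\xi>0$ together with the selfadjointness and $\xi$-independence of $\hat B-B_\mu$. For property~(2) at $\xi>1$, the inequality $\gamma(\xi)\ge 0$ follows from $\hat B-B_\mu\ge 0$ and $F(\xi I-A)^{-1}F^*\ge 0$, while the upper bound $\gamma(\xi)\le\tilde C$ uses that $\wt B\le B_M\oplus I_\cH$ (which by Kre\u\i n's characterization of the extremal extensions is the maximal $sc$-extension of $B$ in $\sH\oplus\cH$), the corresponding Schur complement inequality $(B_M-\hat B)\ge F(I-A)^{-1}F^*$, and the monotonicity $(\xi I-A)^{-1}\le(I-A)^{-1}$; the case $\xi<-1$ is handled symmetrically via $\wt B\ge B_\mu\oplus(-I_\cH)$. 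The canonical case $\cH=\{0\}$ gives $F=0$, whence $\gamma$ and $K$ are independent of $\xi$.

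For the converse direction, given $K(\xi)$ satisfying (1) and (2), the function $\phi(\xi):=(\hat B-B_\mu)-\tilde C^{1/2}K(\xi)\tilde C^{1/2}$ is an $\bL(\sN)$-valued Herglotz-Nevanlinna function of the correct class that can be realized as $\phi(\xi)=F(A-\xi I)^{-1}F^*$ for a selfadjoint contraction $A$ in some auxiliary Hilbert space $\cH$ and some $F$ with $\ran F\subseteq\sN$, using the integral representation of $\mathbf N(\sN)$-functions and the characterization of the class ${\bf S}^{s}(\sN)$ from Subsection~\ref{secS}; assembling the block matrix above recovers an $sc$-extension $\wt B$ with the prescribed compressed resolvent. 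The main obstacle is the careful functional-analytic treatment when $\tilde C^{1/2}$ has non-closed range: the formal inverses $\gamma^{-1}$ and $\tilde C^{-1/2}$ need not be bounded operators, so the Woodbury manipulation must be regularized (for instance via $\gamma(\xi)+\epsilon I$ with $\epsilon\downarrow 0$), and $K$ is then uniquely determined only up to its action on $\ker\tilde C^{1/2}$, which is implicit in the bijection as stated.
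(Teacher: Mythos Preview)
The paper does not prove Theorem~\ref{kror}; it is quoted as an established result from \cite{KrO} (see the sentence immediately preceding the statement), so there is no proof in the paper to compare against. That said, your forward direction is sound and actually dovetails with what the paper does prove later: the Schur--Frobenius step you use is exactly the one underlying Theorem~\ref{comrescontr}, and if you compare your $\gamma(\xi)$ with the function $\Phi_X$ from \eqref{FX} you obtain the explicit identification $K(\xi)=\tfrac12\bigl(I+\Phi_X(1/\xi)\bigr)$ on $\sD_{K_0^*}$, after which properties (1)--(2) of $K$ become precisely the characterization of the class ${\bf S}^{s}$ recalled in Subsection~\ref{secS}. Your use of $\wt B\le B_M\oplus I_\cH$ and $\wt B\ge B_\mu\oplus(-I_\cH)$ for the contraction bounds on $K(\xi)$ is correct; these are exactly the operators $\wt B_M$, $\wt B_\mu$ appearing in the proof of Theorem~\ref{shorts11}.

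There is, however, a real gap in your converse direction: you introduce $\phi(\xi):=(\hat B-B_\mu)-\tilde C^{1/2}K(\xi)\tilde C^{1/2}$ without ever saying what $\hat B$ is. Given only $K(\xi)$, the operator $\hat B$ has to be extracted from $K$; the natural choice is $\hat B:=B_\mu+\tilde C^{1/2}K(\infty)\tilde C^{1/2}$, where $K(\infty):=s\text{-}\lim_{\xi\to\infty}K(\xi)$ exists and lies in $[0,I]$ by the integral representation together with property~(2). You also omit the verification that the assembled block operator $\wt B$ is a contraction: this requires reversing your Schur-complement argument, i.e.\ showing that the realization of $\phi$ can be chosen so that $F(I-A)^{-1}F^*\le B_M-\hat B$ and $F(I+A)^{-1}F^*\le\hat B-B_\mu$, which follows from the limiting values of $K(\xi)$ as $\xi\downarrow 1$ and $\xi\uparrow -1$ but must be stated. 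These are fixable omissions rather than errors of approach.
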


A further study of functions of Kre\u{\i}n-Ovcharenko type was
initiated in \cite{AHS}. Given an arbitrary pair $\{\wh B_0,\wh
B_1\}$ of $sc$-extensions of $B$ in $\sH$ satisfying the condition
$\wh B_0\le \wh B_1$, define a pair of Herglotz-Nevanlinna functions
via
\begin{equation}
\label{q1} {\wh Q}_0(\xi)=\left[(\wh B_1-\wh B_0)^{1/2}(\wh B_0-\xi
I)^{-1}(\wh B_1-\wh B_0)^{1/2}+I\right] \uphar{\sN},
\end{equation}
\begin{equation}
\label{q2} {\wh Q}_1(\xi)=\left[(\wh B_1-\wh B_0)^{1/2}(\wh B_1-\xi
I)^{-1}(\wh B_1-\wh B_0)^{1/2}-I\right]\uphar{\sN},\quad \xi\in
\Ext[-1,1].
\end{equation}
It is easy to verify that ${\wh Q}_0(\xi){\wh Q}_1(\xi)={\wh
Q}_1(\xi){\wh Q}_0(\xi)=-I_{\sN}$, $\xi\in \Ext[-1,1].$ Now proceed
by introducing the classes of Kre\u{\i}n-Ovcharenko type
Herglotz-Nevanlinna functions.

\begin{definition}
\label{def1} {\rm \cite{AHS}}. Let $\sN$ be a Hilbert space. An
$\bL(\sN)$-valued function $\wh Q(\xi)$ is said to belong to the
subclass $\sS_\mu(\sN)$ (respectively, $\sS_M(\sN))$ of
Herglotz-Nevanlinna operator functions if it is holomorphic on
$\Ext[-1,1]$ and, in addition, has the following properties:
\begin{enumerate}
\item[1)] $s-\lim\limits_{\xi\to\infty} {\wh Q}(\xi)=I$ (respectively, $s-\lim\limits_{\xi\to\infty} {\wh Q}(\xi)=-I$);
\item[2)] $\lim\limits_{\xi\uparrow-1}({\wh Q}(\xi)h,h)=+\infty$ for all $h\in\sN\setminus\{0\}$ (respectively, $s-\lim\limits_{\xi\uparrow -1} {\wh Q}(\xi)=0$);
   \item[3)] $s-\lim\limits_{\xi\downarrow 1} {\wh Q}(\xi)=0$ (respectively, $\lim\limits_{\xi\downarrow1}({\wh Q}(\xi)h,h)=-\infty$
          for all $h\in\sN\setminus\{0\}$).
\end{enumerate}
\end{definition}

The function $Q_\mu$ belongs $\sS_\mu(\sN)$ while $Q_M$ is of the
class $\sS_M(\sN)$. It is stated in \cite{KrO} that if the function
$\wh Q$ belongs to $\sS_\mu(\sN)$  (respectively, $\wh
Q\in\sS_M(\sN)$), then it is a $Q_\mu$-function (respectively,
$Q_M$-function) of some nondensely defined Hermitian contraction
$B$. However, it is shown in \cite{AHS} that this statements is true
only when $\dim\sN<\infty$.

\begin{theorem}
\label{Theor}{\rm{\cite{AHS}}}. Assume that ${\wh Q}\in\sS_\mu(\sN)$
($\wh Q\in \sS_M(\sN)$). Then there exist a Hilbert space $\sH$
containing $\sN$ as a subspace, a Hermitian contraction $B$ in $\sH$
defined on $\dom B=\sH\ominus\sN$, and a pair $\{\wh B_0,\wh B_1\}$
of $sc$-extensions of $B$, satisfying $ \wh B_0\le \wh B_1$, $\ker
(\wh B_1-\wh B_0)=\dom B$, such that $\wh Q(\xi)$ admits the
representation in the form \eqref{q1} (in the form \eqref{q2},
respectively). Moreover, the pair $\{\wh B_0,\wh B_1\}$ possesses
the following properties
\begin{equation}
\label{parsum}
 \ran (\wh B_1-\wh B_0)^{1/2}\cap\ran (\wh B_0-B_\mu)^{1/2}=\ran (\wh B_1-\wh B_0)^{1/2}\cap\ran
(B_M-\wh B_1)^{1/2}=\{0\},
\end{equation}
If $\dim\sN<\infty$, then  necessarily $\wh B_0=B_\mu$ and $\wh
B_1=B_M$.
\end{theorem}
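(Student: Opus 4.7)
The plan is to recover all geometric data from $\wh Q$ via an operator-valued integral representation of $\wh Q-I$ and an associated Naimark-type selfadjoint dilation. I would treat $\wh Q\in\sS_\mu(\sN)$; the case $\sS_M$ follows by symmetry, e.g.\ via the inversion ${\wh Q}_0\,{\wh Q}_1=-I_\sN$ between \eqref{q1} and \eqref{q2}.

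First, $F(\xi):=\wh Q(\xi)-I$ is a Herglotz--Nevanlinna function, holomorphic on $\Ext[-1,1]$ and vanishing strongly at $\infty$, so it admits an integral representation $F(\xi)=\int_{[-1,1]}(t-\xi)^{-1}\,d\Sigma(t)$ with a unique nondecreasing $\bL(\sN)$-valued measure $\Sigma$ on $[-1,1]$. The end-point properties of Definition~\ref{def1} translate into concrete moment-type conditions on $\Sigma$: property 2) gives $\int(1+t)^{-1}d(\Sigma(t)h,h)=+\infty$ for every $h\neq 0$, and property 3) gives $\int(1-t)^{-1}d\Sigma(t)=I_\sN$ strongly. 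The minimal Naimark dilation of $F$ then produces a Hilbert space $\cL$, a bounded selfadjoint operator $A$ on $\cL$ with $\sigma(A)\subseteq[-1,1]$, and a bounded $\Phi\in\bL(\sN,\cL)$ with $\Phi^*(A-\xi I)^{-1}\Phi=F(\xi)$. By direct inspection of the integral, these two moment conditions are equivalent to the geometric properties $\ran\Phi\cap\ran(I+A)^{1/2}=\{0\}$ (from property 2) and a Douglas-lemma factorization $\Phi=(I-A)^{1/2}\Psi$ in which $\Psi\in\bL(\sN,\cL)$ is even an isometry (from property 3).

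Next, set $\sH:=\sN\oplus\cL$ (with $\sN$ as the first summand) and $\sH_0:=\cL=\sH\ominus\sN$. I would parametrize all candidate pairs $\{\wh B_0,\wh B_1\}$ of $sc$-extensions of an unknown Hermitian contraction $B$ with $\dom B=\sH_0$ in block form via Theorem~\ref{qscex}, and adjust the parameters so that (i) the $(2,2)$ block of $\wh B_0$ is $A$ and the $(2,1)$ block encodes $\Phi$; (ii) the Schur--Frobenius identity \eqref{Sh-Fr1}--\eqref{inv11} applied to $P_\sN(\wh B_0-\xi I)^{-1}\uphar\sN$, after conjugation by $(\wh B_1-\wh B_0)^{1/2}$, reproduces $F(\xi)=\Phi^*(A-\xi I)^{-1}\Phi$ and hence \eqref{q1}; and (iii) $-I\le\wh B_0\le\wh B_1\le I$ with $\ker(\wh B_1-\wh B_0)=\sH_0$. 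The feasibility of (iii) rests on the isometric factorization $\Phi=(I-A)^{1/2}\Psi$ from property 3) (which controls the upper bound $\wh B_1\le I$) and on $\ran\Phi\cap\ran(I+A)^{1/2}=\{0\}$ from property 2) (which, via Theorem~\ref{qscex}(ii), supplies the required injectivity of $D_{K_0^*}$). Define $B:=\wh B_0\uphar\sH_0=\wh B_1\uphar\sH_0$; this is well-defined since $\wh B_1-\wh B_0$ is supported on $\sN$.

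For the intersection conditions \eqref{parsum} I would invoke \eqref{rn1}--\eqref{Sh2} to rewrite them as
\[
\ran(I+\wh B_0)^{1/2}\cap\ran(\wh B_1-\wh B_0)^{1/2}=\ran(I-\wh B_1)^{1/2}\cap\ran(\wh B_1-\wh B_0)^{1/2}=\{0\},
\]
and then verify each equality using \eqref{nol} together with the two dilation-level geometric conditions above. For the finite-dimensional uniqueness, in finite dimension all ranges are closed and contained in the finite-dimensional $\sN$; the nonnegative decomposition $B_M-B_\mu=(B_M-\wh B_1)+(\wh B_1-\wh B_0)+(\wh B_0-B_\mu)$, combined with \eqref{parsum} and the shorted-operator formula \eqref{shormat1}, forces $\wh B_0=B_\mu$ and $\wh B_1=B_M$. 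The principal technical obstacle will be step (iii): simultaneously enforcing the two contraction bounds, the kernel condition on $\wh B_1-\wh B_0$, and the Schur--Frobenius matching in (ii); the shorted-operator formalism \eqref{Sh1}--\eqref{nol} is the bridge that turns each analytic end-point property of $\wh Q$ into a precise geometric constraint on the operators.
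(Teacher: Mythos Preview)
The paper does not prove this theorem at all: it is quoted from \cite{AHS} in the preliminaries (Subsection~2.6) and stated without proof. There is therefore no ``paper's own proof'' to compare against. The present paper only revisits the \emph{existence of pairs} $\{\wh B_0,\wh B_1\}$ satisfying \eqref{parsum} from a different angle, via exit-space extensions (Theorems~\ref{novaya} and \ref{construc}); it never reconstructs such a pair from a prescribed $\wh Q$.

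On your proposal itself: the overall scheme (integral representation of $\wh Q-I$, minimal Naimark dilation, translation of the boundary behaviour in Definition~\ref{def1} into the two range conditions on $\Phi$) is the natural one and is essentially how \cite{AHS} proceeds. Your reading of properties 2) and 3) as $\ran\Phi\cap\ran(I+A)^{1/2}=\{0\}$ and $\Phi=(I-A)^{1/2}\Psi$ with $\Psi$ isometric is correct, and your finite-dimensional argument is fine: once $\ker(\wh B_1-\wh B_0)=\sH_0$ forces $\ran(\wh B_1-\wh B_0)^{1/2}=\sN$, the intersection conditions \eqref{parsum} and the inclusion $\ran(\wh B_0-B_\mu)^{1/2}\subset\sN$ give $\wh B_0=B_\mu$, and symmetrically $\wh B_1=B_M$.

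The real gap is the construction step. Setting $\sH=\sN\oplus\cL$ and asking that the $(2,2)$-block of $\wh B_0$ equal $A$ does not, via Schur--Frobenius, reproduce $\Phi^*(A-\xi I)^{-1}\Phi$: formula \eqref{inv11} gives $\bigl(P_\sN(\wh B_0-\xi I)^{-1}\uphar\sN\bigr)^{-1}$ as a Schur complement $D-C(A-\xi I)^{-1}B-\xi I_\sN$, not as a conjugate of $(A-\xi I)^{-1}$, so after multiplying by $(\wh B_1-\wh B_0)^{1/2}$ on both sides you do not land on $F(\xi)$ without a further nontrivial identity that you have not supplied. The route that actually works is to take $\sH=\cL$ itself (not $\sN\oplus\cL$), embed $\sN$ into $\cL$ via the isometry $\Psi$, and set $\wh B_0=A$; then $(\wh B_0-\xi I)^{-1}=(A-\xi I)^{-1}$ with no Schur complement, and the remaining task is to manufacture $\wh B_1$ with $\wh B_1-\wh B_0$ supported on $\Psi(\sN)$ so that the conjugation matches $\Phi^*(A-\xi I)^{-1}\Phi$ and $\wh B_1\le I$. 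The factorization $\Phi=(I-A)^{1/2}\Psi$ is exactly what makes this last step go through, and the condition $\ran\Phi\cap\ran(I+A)^{1/2}=\{0\}$ then yields the first half of \eqref{parsum} via \eqref{nol} and \eqref{rn1}. Your sketch has the right ingredients but the wrong model space, and the appeal to Schur--Frobenius in (ii) is a red herring.
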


In particular, in the case that $\dim \sN=\infty$ \cite{AHS} (see
also \cite{ArlHassi_2014}) contains a construction of pairs $\{\wh
B_0,\wh B_1\}$ of $sc$-extensions which differ from $\{B_\mu, B_M\}$
and satisfy the conditions in \eqref{parsum}: in other words, the
corresponding $Q$-functions given by \eqref{q1} and \eqref{q2}
belong to $\sS_\mu(\sN)$ and $\sS_M(\sN)$, respectively, but they do
not coincide with the $Q_\mu$- and $Q_M$-functions of $B$.

To finish this section the following simple observation is
mentioned: if $V$ is an isometry in $\sN$ and $\wh B_0\le \wh B_1$
are $sc$-extensions, then the operator-valued functions
\[
\begin{array}{l}
\wh Q_0(\xi):=\left(I_{\sN}+V(\wh B_1-\wh B_0)^{1/2}\left(\wh
 B_0-\xi I_\sH\right)^{-1}(\wh B_1-\wh
 B_0)^{1/2}V^*\right)\uphar\sN,\\
 \wh Q_1(\xi):=\left(-I_{\sN}+V(\wh B_1-\wh B_0)^{1/2}\left(\wh
 B_1-\xi I_\sH\right)^{-1}(\wh B_1-\wh
B_0)^{1/2}V^*\right)\uphar\sN,\; \xi\in \Ext[-1,1]
\end{array}
\]
belong to the Herglotz-Nevanlinna class and $\wh Q^{-1}_1(\xi)=-\wh
Q_0(\xi),$  $\xi\in \Ext[-1,1].$
\begin{remark}
\label{paralslozh} If $F$ and $G$ are bounded nonnegative
selfadjoint operators, then the parallel sum $F:G$ can be
 defined \cite{AD}, \cite{FW}. The conditions
$F:G=0$ and $\ran F^{1/2}\cap\ran G^{1/2}=\{0\}$ are equivalent.
\end{remark}

\section{Selfadjoint contractive extensions of nondensely defined Hermitian contractions with exit}

Let $B$ be a nondensely defined Hermitian contraction in the Hilbert
space $\sH$ and let $\cH$ be an auxiliary Hilbert space. If $B$ is
given by \eqref{HC}, then all $qsc$-extensions of $B$ in the
extended Hilbert space $\sH\oplus\cH$ can be described as follows.
Let
\[
\wh \cH=\sN\oplus\cH,
\]
let $j_{\wh\cH}$ be the canonical embedding operator $\sN\to\wh\cH$,
and define $\wh K_0=j_{\wh\cH}K_0$.
 Then
 \[
D_{\wh K_0^*}=(I_{\wh\cH}-\wh K_0\wh K_0^*)^{1/2}=\begin{bmatrix}D_{
K_0^*}&0\cr 0&I_\cH
\end{bmatrix}:\begin{array}{l}\sN\\\oplus\\\cH \end{array}\to \begin{array}{l}\sN\\\oplus\\\cH \end{array}.
 \]
Clearly, $\sD_{\wh K_0^*}=\sD_{ K_0^*}\oplus\cH\subset \wh\cH$. In
what follows we identify $B$ with its image in $\sH_0\oplus\wh\cH$.
By Theorem \ref{qscex} a $qsc$-extension $\wt B$ of $B$ in
$\sH\oplus\cH$ with respect to the decomposition
$\sH\oplus\cH=\sH_0\oplus\wh \cH$ takes the block form
\[
\wt B=\wt B_X=\begin{bmatrix}B_0&D_{B_0}\wh K_0^*\cr \wh K_0D_{B_0}&
-\wh K_0B_0\wh K_0^*+D_{\wh K_0^*}XD_{\wh
K_0^*}\end{bmatrix}:\begin{array}{l}\sH_0\\\oplus\\\wh \cH
\end{array}\to \begin{array}{l}\sH_0\\\oplus\\\wh\cH \end{array},
 \]
where $X:\sD_{\wh K_0^*}\to \sD_{\wh K_0^*}$ is a contraction. Let
\begin{equation}
\label{blx}
X=\begin{bmatrix}X_{11}& X_{12}\cr X_{21}&X_{22}  \end{bmatrix}:\begin{array}{l}\sD_{ K_0^*}\\
\oplus\\\cH\end{array}\to \begin{array}{l}\sD_{ K_0^*}\\
\oplus\\\cH\end{array}
\end{equation}
be the block representation of the operator $X$. Then
\begin{equation}\label{CONTREXT1}
 \wt B=\begin{bmatrix}B_0&D_{B_0}K_0^*&0\cr K_0
D_{B_0}&-K_0B_0K_0^*+D_{K_0^*}X_{11}D_{K_0^*}& D_{K_0^*}X_{12}\cr
0&X_{21}D_{K_0^*}&X_{22}\end{bmatrix}
:\begin{array}{l}\sH_0\\\oplus\\\sN\\\oplus\\\cH\end{array}\to
\begin{array}{l}\sH_0\\\oplus\\\sN\\\oplus\\\cH\end{array}.
\end{equation}
Let $\cK$ be a Hilbert space. Associate with any selfadjoint
contraction
\[X=\begin{bmatrix}X_{11}& X_{12}\cr X^*_{12}&X_{22}  \end{bmatrix}:\begin{array}{l}\cK\\
\oplus\\\cH\end{array}\to \begin{array}{l}\cK\\
\oplus\\\cH\end{array}
\]
two further selfadjoint contractions in $\cK$ via
\begin{multline}
\label{predeli} \wh Z_0:=\left((I+X)_{\cK}-I\right)\uphar\cK
=X_{11}-\left((I+X_{22})^{(-1/2)}X^*_{12}\right)^*(I+X_{22})^{(-1/2)}X^*_{12},\\
\wh Z_1:=\left(I-(I-X)_{\cK}\right)\uphar\cK
=X_{11}+\left((I-X_{22})^{(-1/2)}X^*_{12}\right)^*(I-X_{22})^{(-1/2)}X^*_{12}.
\end{multline}
By Remark \ref{parr} selfadjoint contractions $X$ in $\cK\oplus\cH$
are of the form
\begin{equation}\label{XX}
X=\begin{bmatrix} -UX_{22}U^*+D_{U^*}VD_{U^*}& UD_{X_{22}}\cr
D_{X_{22}}U^*&X_{22}\end{bmatrix}:\begin{array}{l}\cK\\\oplus\\\cH\end{array}\to
\begin{array}{l}\cK\\\oplus\\\cH\end{array},
\end{equation}
where $X_{22}\in\bL(\cH)$, $U\in\bL(\sD_{X_{22}},\cK)$,
$V\in\bL(\sD_{U^*})$ are contractions, and $X_{22}$ and $V$ are
selfadjoint. Then from \eqref{predeli} and \eqref{XX} one obtains
\[
\wh Z_0=D_{U^*}VD_{U^*}-UU^*,\quad \wh Z_1=D_{U^*}VD_{U^*}+UU^*.
\]
Hence,
\begin{equation}
\label{aft} UU^*=\frac{1}{2}(\wh Z_1-\wh Z_0), \quad
D_{U^*}VD_{U^*}=\frac{1}{2}(\wh Z_1+\wh Z_0).
\end{equation}
Then clearly $\wh Z_0\le \wh Z_1$ and, moreover,
\begin{equation}
\label{eqzer} \ker (\wh Z_1-\wh Z_0)=\{0\}\iff \ker X^*_{12}=\{0\}.
\end{equation}
With $\cK=\sD_{K^*_0}$ as above, $\wh Z_0$ and $\wh Z_1$ determine
two $sc$-extensions $\wh B_0$ and $\wh B_1$ of $B$ in $\sH$:
\begin{equation}
\label{BPHIPM}
 \wh B_0: =\begin{bmatrix}B_0&D_{B_0}K_0^*\cr
K_0 D_{B_0}&-K_0B_0K_0^*+D_{K_0^*}\wh Z_0D_{K_0^*}\end{bmatrix}
 =B_\mu+D_{K^*_0}(I+\wh Z_0)D_{K^*_0},
\end{equation}
\begin{equation}
\label{BPHIPM1} \wh B_1: =\begin{bmatrix}B_0&D_{B_0}K_0^*\cr K_0
D_{B_0}&-K_0B_0K_0^*+D_{K_0^*}\wh Z_1D_{K_0^*}\end{bmatrix}
=B_M-D_{K^*_0}(I-\wh Z_1)D_{K^*_0},
\end{equation}
From definitions and Remark \ref{parr} we get
$$ (I+\wh B_0)_\sN=D_{K^*_0}
P_\sN(I+X)_{\sD_{K^*_0}}D_{K^*_0}P_\sN,\;(I-\wh
B_1)_\sN=D_{K^*_0}P_\sN(I-X)_{\sD_{K^*_0}} D_{K^*_0}P_\sN.$$

\begin{proposition}
\label{parr11} Let $\wh Z_0$ and $\wh Z_1$ be two selfadjoint
contractions in a Hilbert space $\cK$, such that $\wh Z_0\le \wh
Z_1$. If the Hilbert space $\cH$ satisfies $\dim\cH\ge\dim\cran(\wh
Z_1-\wh Z_0)$, then all selfadjoint contractions $X$ in
$\cK\oplus\cH$ possessing the properties
$\left((I+X)_{\cK}-I\right)\uphar\cK=\wh Z_0$ and
$\left(I-(I-X)_{\cK}\right)\uphar\cK=\wh Z_1$ are given by the
formula
\[\begin{array}{l}
X=\begin{bmatrix} \frac{\wh Z_1+\wh Z_0}{2}-\left(\frac{\wh Z_1-\wh
Z_0}{2}\right)^{1/2}\cV^*X_{22}\cV\left(\frac{\wh Z_1-\wh
Z_0}{2}\right)^{1/2}& \left(\frac{\wh Z_1-\wh
Z_0}{2}\right)^{1/2}\cV^*D_{X_{22}} \cr D_{X_{22}}\cV\left(\frac{\wh
Z_1-\wh Z_0}{2}\right)^{1/2}&X_{22}\end{bmatrix}:
\begin{array}{l}\cK\\\oplus\\\cH\end{array}\to
\begin{array}{l}\cK\\\oplus\\\cH\end{array}
\end{array}
\]
where $X_{22}$ is an arbitrary selfadjoint contraction in $\cH$ and
$\cV$ is an arbitrary isometry from $\cran (\wh Z_1-\wh Z_0)$ into
$\sD_{X_{22}}$. In particular, if $\wh Z_0=-I_{\cK}$ and $\wh
Z_1=I_{\cK}$, then
\[\begin{array}{l}
X=\begin{bmatrix} -\cV^*X_{22}\cV& \cV^*D_{X_{22}} \cr
D_{X_{22}}\cV&X_{22}\end{bmatrix}:\begin{array}{l}\cK\\\oplus\\\cH\end{array}\to
\begin{array}{l}\cK\\\oplus\\\cH\end{array},
\end{array}
\]
where $\cV$  is an arbitrary isometry from $\cK$ into
$\sD_{X_{22}}.$
\end{proposition}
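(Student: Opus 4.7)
The plan is to apply the second block representation from Remark~\ref{parr}: every selfadjoint contraction $X$ in $\cK\oplus\cH$ is of the form
\[
X=\begin{bmatrix}-UX_{22}U^*+D_{U^*}VD_{U^*} & UD_{X_{22}} \\ D_{X_{22}}U^* & X_{22}\end{bmatrix},
\]
uniquely determined by selfadjoint contractions $X_{22}\in\bL(\cH)$ and $V\in\bL(\sD_{U^*})$ together with a contraction $U\in\bL(\sD_{X_{22}},\cK)$. According to the identities in \eqref{aft} the conditions $\left((I+X)_\cK-I\right)\uphar\cK=\wh Z_0$ and $\left(I-(I-X)_\cK\right)\uphar\cK=\wh Z_1$ translate precisely into
\[
UU^*=\tfrac12(\wh Z_1-\wh Z_0), \qquad D_{U^*}VD_{U^*}=\tfrac12(\wh Z_1+\wh Z_0),
\]
so the task reduces to parameterising all triples $(X_{22},U,V)$ satisfying these two equations and then substituting back into the block form.

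For the first equation I would invoke the polar decomposition of $U^*$: one writes $U^*=\cV|U^*|$ with $|U^*|=(UU^*)^{1/2}=\bigl(\tfrac{\wh Z_1-\wh Z_0}{2}\bigr)^{1/2}$ and $\cV\colon\cK\to\sD_{X_{22}}$ a partial isometry with initial space $\cran(\wh Z_1-\wh Z_0)$. Conversely, for any isometry $\cV$ from $\cran(\wh Z_1-\wh Z_0)$ into $\sD_{X_{22}}$ the operator $U=\bigl(\tfrac{\wh Z_1-\wh Z_0}{2}\bigr)^{1/2}\cV^*$ is a contraction, as $\wh Z_1-\wh Z_0\le 2I_\cK$, and satisfies $UU^*=\tfrac12(\wh Z_1-\wh Z_0)$. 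The dimension assumption $\dim\cH\ge\dim\cran(\wh Z_1-\wh Z_0)$ ensures that for suitable $X_{22}$ (for instance $X_{22}=0$, where $\sD_{X_{22}}=\cH$) such an isometry exists.

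The step I expect to demand the most care is showing that, once $U$ has been fixed, there exists a unique selfadjoint contraction $V\in\bL(\sD_{U^*})$ with $D_{U^*}VD_{U^*}=\tfrac12(\wh Z_1+\wh Z_0)$. Uniqueness on $\cran D_{U^*}$ is immediate; existence together with $\|V\|\le 1$ is equivalent to the operator inequality
\[
-D_{U^*}^2\le\tfrac12(\wh Z_1+\wh Z_0)\le D_{U^*}^2,
\]
and using $D_{U^*}^2=I_\cK-\tfrac12(\wh Z_1-\wh Z_0)$ these inequalities collapse respectively to $\wh Z_0\ge-I_\cK$ and $\wh Z_1\le I_\cK$, both holding by hypothesis. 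Selfadjointness of $V$ is automatic from the selfadjointness of the right-hand side.

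Finally, substituting $U=\bigl(\tfrac{\wh Z_1-\wh Z_0}{2}\bigr)^{1/2}\cV^*$, $U^*=\cV\bigl(\tfrac{\wh Z_1-\wh Z_0}{2}\bigr)^{1/2}$, and $D_{U^*}VD_{U^*}=\tfrac12(\wh Z_1+\wh Z_0)$ into the block form of $X$ reproduces the announced formula. The special case $\wh Z_0=-I_\cK$, $\wh Z_1=I_\cK$ follows at once: then $\tfrac12(\wh Z_1-\wh Z_0)=I_\cK$ and $\tfrac12(\wh Z_1+\wh Z_0)=0$, so $\cV$ becomes an isometry from all of $\cK$ into $\sD_{X_{22}}$ and the $(1,1)$-entry of $X$ collapses to $-\cV^*X_{22}\cV$.
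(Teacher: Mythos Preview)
Your argument is correct and follows precisely the route indicated by the paper: the paper's proof is the one-liner ``Conclusions in the proposition follow from relations \eqref{predeli}, \eqref{XX}, and \eqref{aft}'', and you have spelled out exactly those relations --- the block form \eqref{XX} from Remark~\ref{parr}, the identities \eqref{aft} for $UU^*$ and $D_{U^*}VD_{U^*}$, and the polar factorisation of $U^*$ together with the Douglas-type existence of $V$. The only detail one might flag is that the existence of a selfadjoint contraction $V$ with $D_{U^*}VD_{U^*}=\tfrac12(\wh Z_1+\wh Z_0)$ from the two-sided inequality $-D_{U^*}^2\le\tfrac12(\wh Z_1+\wh Z_0)\le D_{U^*}^2$ is itself a standard factorisation lemma (e.g.\ via Douglas' theorem applied to $D_{U^*}^2\pm\tfrac12(\wh Z_1+\wh Z_0)\ge 0$), but this is entirely routine and in any case the paper does not spell it out either.
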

\begin{proof}
Conclusions in the proposition follow from relations
\eqref{predeli}, \eqref{XX}, and \eqref{aft}.
\end{proof}

The next result clarifies the definitions of $\wh B_0$ and $\wh B_1$
in \eqref{BPHIPM}, \eqref{BPHIPM1} by establishing an exit space
version for the identities in \eqref{rn1}.

\begin{theorem}
\label{shorts11} Assume that $\sD_{K^*_0}=\sN$, let
$X=(X_{ij})_{i,j=1}^2$ be a selfadjoint contraction in
$\sN\oplus\cH$ as in \eqref{blx}, and let
\[
\wt B_X= \begin{bmatrix}B_0&D_{B_0}\wh K_0^*\cr \wh K_0D_{B_0}& -\wh
K_0B_0\wh K_0^*+D_{\wh K_0^*}XD_{\wh
K_0^*}\end{bmatrix}:\begin{array}{l}\sH\\\oplus\\\cH\end{array}\to\begin{array}{l}\sH\\\oplus\\\cH\end{array}.
\]
Then $\wh B_0$ and $\wh B_1$ defined in \eqref{BPHIPM} and
\eqref{BPHIPM1} satisfy the relations
\begin{equation}\label{gjlheujve}
 \wh B_0=B_\mu+ \left(I+\wt B_X\right)_\sN\uphar\sH,\quad \wh B_1=B_M-\left(I-\wt B_X\right)_\sN\uphar\sH.
\end{equation}
\end{theorem}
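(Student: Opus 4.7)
My plan is to compute $(I \pm \wt B_X)_\sN$ in two stages: first shorten from the ambient space $\sH\oplus\cH$ down to the intermediate subspace $\wh\cH = \sN\oplus\cH$, which is already handled by Remark \ref{parr}; then shorten from $\wh\cH$ down to $\sN$ using the block-matrix formula \eqref{shormat1}. The two stages combine via the transitivity of the Kre\u\i n shortening on nested subspaces: if $\sN \subseteq \wh\cH$ and $S \ge 0$, then $S_\sN = (S_{\wh\cH})_\sN$. This follows directly from the extremal description \eqref{Sh1} of the shortened operator, because $(S_{\wh\cH})_\sN \le S$ has range in $\sN$ (hence is $\le S_\sN$), while $S_\sN \le S$ has range in $\sN \subseteq \wh\cH$ (hence is $\le S_{\wh\cH}$), and then $S_\sN = (S_\sN)_\sN \le (S_{\wh\cH})_\sN$.

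For the first stage, $\wt B_X$ has exactly the form \eqref{MCE} with $L = \wh K_0$ and $Y = X$, so applying Remark \ref{parr} with $\cH_1 = \sH_0$ and $\cH_2 = \wh\cH$ yields
\[
(I \pm \wt B_X)_{\wh\cH} = D_{\wh K_0^*}(I \pm X) D_{\wh K_0^*} P_{\wh\cH}.
\]
For the second stage, decompose $\wh\cH = \sN\oplus\cH$ and use that $D_{\wh K_0^*}$ is block diagonal with entries $D_{K_0^*}$ and $I_\cH$, together with the block form \eqref{blx} of $X$; the operator on $\wh\cH$ then has the $(\sN,\cH)$-block structure
\[
\begin{bmatrix}
D_{K_0^*}(I \pm X_{11}) D_{K_0^*} & \pm D_{K_0^*} X_{12} \\
\pm X_{21} D_{K_0^*} & I \pm X_{22}
\end{bmatrix}.
\]
Since $I \pm X_{22} \ge 0$ (as $X$ is a selfadjoint contraction), \eqref{shormat1} with $\cK = \sN$ gives
\[
\bigl((I \pm \wt B_X)_{\wh\cH}\bigr)_\sN = D_{K_0^*}\Bigl[(I \pm X_{11}) - \bigl((I \pm X_{22})^{(-1/2)}X_{21}\bigr)^*(I \pm X_{22})^{(-1/2)}X_{21}\Bigr] D_{K_0^*}
\]
on the $\sN$-block (and zero on $\cH$). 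By the very definitions \eqref{predeli}, the bracketed factor equals $I + \wh Z_0$ in the $+$-case and $I - \wh Z_1$ in the $-$-case.

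Combining the two stages and using \eqref{ENDS} (so that $B_M - B_\mu$ acts as $2 D_{K_0^*}^2$ on the $\sN$-block), one matches the result with \eqref{BPHIPM}--\eqref{BPHIPM1}:
\[
B_\mu + (I + \wt B_X)_\sN\uphar\sH = B_\mu + D_{K_0^*}(I + \wh Z_0) D_{K_0^*} = \wh B_0,
\]
\[
B_M - (I - \wt B_X)_\sN\uphar\sH = B_M - D_{K_0^*}(I - \wh Z_1) D_{K_0^*} = \wh B_1,
\]
which is the assertion \eqref{gjlheujve}. The only conceptual ingredient is the transitivity of the Kre\u\i n shortening used in the first paragraph; the rest is a direct term-by-term matching with the definitions \eqref{predeli}, and I do not expect any obstacle there.
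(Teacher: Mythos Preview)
Your proof is correct. Both your argument and the paper's rest on the same two-stage idea---first pass from $\sH\oplus\cH$ down to $\wh\cH=\sN\oplus\cH$, then from $\wh\cH$ to $\sN$---but the implementations differ. The paper works directly with the variational description \eqref{Sh1}: it writes $I+\wt B_X=(I+\wt B_\mu)+D_{\wh K_0^*}(I+X)D_{\wh K_0^*}$, observes that the resulting quadratic form in $f+f_0+h$ decouples into a piece depending on $(f,f_0)$ and a piece depending on $(f,h)$, and then minimizes each separately (the first vanishes by \eqref{extr}, the second gives $((I+X)_\sN D_{K_0^*}f,D_{K_0^*}f)$). You instead make the two-stage shortening explicit via the transitivity lemma and then invoke Remark~\ref{parr} and the Schur-complement formula \eqref{shormat1}. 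Your route is a bit more systematic and makes the appearance of $\wh Z_0,\wh Z_1$ from \eqref{predeli} transparent by direct block computation; the paper's route is shorter and avoids stating transitivity separately, since the splitting of the double infimum is exactly that transitivity in disguise. Both are entirely valid.
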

\begin{proof}
Let $ \wt B_\mu:=B_\mu P_\sH\oplus (- P_\cH),$ $\wt B_M:=B_M
P_\sH\oplus P_\cH. $ Then it follows from \eqref{ENDS} that
\[
\wt B_X=\wt B_\mu+D_{\wh K^*_0}(I+X)D_{\wh K^*_0}=\wt B_M-D_{\wh
K^*_0}(I-X)D_{\wh K^*_0}.
\]
Moreover, using \eqref{Sh1} and \eqref{extr} it is seen that for all
$f\in\sH\oplus\cH$
\begin{multline*}
 \left(\left(I+\wt B_X\right)_\sN f,f\right)= \inf\limits_{\begin{array}{l}f_0\in\sH_0\\
  h\in\cH\end{array}}\left(\left(I+\wt B_X\right)(f+f_0+h),f+f_0+h\right)\\
 =\inf\limits_{f_0\in\sH_0}\left(\left(I+B_\mu\right)(f+f_0),f+f_0\right)
 +\inf\limits_{h\in\cH}\left(\left(I+X\right)D_{\wh K^*_0}(f+h),D_{\wh K^*_0}(f+h)\right)\\
 =\inf\limits_{h\in\cH}\left(\left(I+X\right)D_{\wh K^*_0}(f+h),D_{\wh K^*_0}(f+h)\right)
 =\left(\left(I+X\right)_{\sN}D_{K^*_0}P_{\sN}f,D_{K^*_0}P_{\sN}f\right).
\end{multline*}
In view of \eqref{predeli} $(I+X)_\sN=I+\wh Z_0$ which combined with
the identity \eqref{BPHIPM} leads to
\[
D_{K^*_0}(I+\wh Z_0)D_{K^*_0}=\wh B_0-B_\mu.
\]
This proves the first identity in \eqref{gjlheujve}. The second
identity in \eqref{gjlheujve} is proved similarly.
\end{proof}

It is also useful to describe shortenings of $I\pm \wt B_X$ to the
exit space $\cH$.

\begin{theorem}\label{equshorts}
Let $X=(X_{ij})_{i,j=1}^2$ be a selfadjoint contraction in
$\sD_{K_0^*}\oplus\cH$ as in \eqref{blx} and let
\[
 \wt B_X=\begin{bmatrix}B_0&D_{B_0}\wh K_0^*\cr \wh K_0 D_{B_0}&-\wh
 K_0B_0\wh K_0^*+D_{\wh K_0^*}X D_{\wh K_0^*}\end{bmatrix}
 :\begin{array}{l}\sH_0\\\oplus\\\wh\cH\end{array}\to
 \begin{array}{l}\sH_0\\\oplus\\\wh\cH\end{array}.
\]
Then
 \begin{equation}
 \label{priyat}
 (I\pm \wt B_X)_\cH\uphar\cH=(I\pm X)_\cH\uphar\cH.
 \end{equation}
 \end{theorem}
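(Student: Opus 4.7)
The plan is to apply the variational characterization \eqref{Sh1} of the shorted operator at $h \in \cH$ to both sides of \eqref{priyat} and show that the resulting infima agree. For this I would record first the reformulation
\[
 \wt B_X = \wt B_\mu + D_{\wh K_0^*}(I + X) D_{\wh K_0^*} = \wt B_M - D_{\wh K_0^*}(I - X) D_{\wh K_0^*},
\]
where, by \eqref{ENDS} applied in the extended space with $X = \mp I_{\sD_{\wh K_0^*}}$, one has $\wt B_\mu = B_\mu \oplus (-I_\cH)$ and $\wt B_M = B_M \oplus I_\cH$ on $\sH \oplus \cH$ (so that $I + \wt B_\mu$ and $I - \wt B_M$ have vanishing $\cH$-blocks).

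Next I would carry out the key decoupling computation: for every $(f_0, n, h) \in \sH_0 \oplus \sN \oplus \cH$,
\[
 ((I + \wt B_X)(f_0, n, h), (f_0, n, h)) = ((I + B_\mu)(f_0, n), (f_0, n)) + ((I + X)(D_{K_0^*} n, h), (D_{K_0^*} n, h)).
\]
This is a direct block-matrix expansion that uses $\wh K_0^*(n, h) = K_0^* n$ and $D_{\wh K_0^*}(n, h) = (D_{K_0^*} n, h)$; the diagonal $\|h\|^2$-term coming from the identity on $\cH$ is absorbed into the second summand because $(I + X)$ already carries the identity on the $\cH$-component of $\sD_{\wh K_0^*} = \sD_{K_0^*} \oplus \cH$.

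Fixing $h \in \cH$, I would then minimize over $(f_0, n) \in \sH_0 \oplus \sN$. For each fixed $n$, the first summand is nonnegative and independent of $h$, and its infimum over $f_0$ is precisely $((I + B_\mu)_\sN n, n) = 0$ by \eqref{extr}. Hence
\[
 ((I + \wt B_X)_\cH h, h) = \inf_{n \in \sN} ((I + X)(D_{K_0^*} n, h), (D_{K_0^*} n, h)).
\]
Since $\ran D_{K_0^*}$ is dense in $\sD_{K_0^*}$ and the quadratic form of the bounded nonnegative operator $I + X$ is continuous, the last infimum coincides with $\inf_{u \in \sD_{K_0^*}} ((I + X)(u, h), (u, h)) = ((I + X)_\cH h, h)$, where the shortening of $I + X$ is computed inside $\sD_{\wh K_0^*} = \sD_{K_0^*} \oplus \cH$. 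This proves the ``$+$'' identity in \eqref{priyat}, and equality of the two nonnegative quadratic forms on $\cH$ gives the operator identity on $\cH$.

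The ``$-$'' identity is obtained by the same argument applied to the alternative reformulation $\wt B_X = \wt B_M - D_{\wh K_0^*}(I - X) D_{\wh K_0^*}$, with the analogous decoupling
\[
 ((I - \wt B_X)(f_0, n, h), (f_0, n, h)) = ((I - B_M)(f_0, n), (f_0, n)) + ((I - X)(D_{K_0^*} n, h), (D_{K_0^*} n, h))
\]
and invoking $(I - B_M)_\sN = 0$ from \eqref{extr} in place of $(I + B_\mu)_\sN = 0$. The main technical obstacle is the careful block-arithmetic verification of the decoupling identity; once that identity is established, the remainder of the argument is a short application of \eqref{Sh1} together with the extremal shortening relations \eqref{extr} and a density/continuity step to replace $\ran D_{K_0^*}$ by $\sD_{K_0^*}$.
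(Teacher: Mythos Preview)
Your proof is correct and takes a genuinely different route from the paper. You argue directly via the variational formula \eqref{Sh1}: the decoupling identity
\[
 ((I+\wt B_X)(f_0,n,h),(f_0,n,h))=((I+B_\mu)(f_0,n),(f_0,n))+((I+X)(D_{K_0^*}n,h),(D_{K_0^*}n,h))
\]
is exactly the one the paper uses in its proof of Theorem~\ref{shorts11}, and you then minimize iteratively, killing the first summand by $(I+B_\mu)_\sN=0$ and passing from $\ran D_{K_0^*}$ to $\sD_{K_0^*}$ by continuity. This is clean and self-contained.

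The paper instead proceeds structurally: it introduces the Hermitian contractions $\cX=\begin{bmatrix}X_{11}\cr X_{12}^*\end{bmatrix}$ and $\cB_\cX$ (the first two block-columns of $\wt B_X$), observes that $X\mapsto\wt B_X$ is an order isomorphism between the $sc$-extension intervals $[(\cX)_\mu,(\cX)_M]$ and $[(\cB_\cX)_\mu,(\cB_\cX)_M]$, and then applies Kre\u{\i}n's identities \eqref{rn1} on both sides; since $(\wt B_{X_1}-\wt B_{X_2})\uphar\cH=(X_1-X_2)\uphar\cH$, the shortenings to $\cH$ match. Your approach is more elementary and avoids the auxiliary extension problem, while the paper's argument yields as a by-product the identification \eqref{extBX} of the extreme extensions $(\cB_\cX)_\mu=\wt B_{(\cX)_\mu}$, $(\cB_\cX)_M=\wt B_{(\cX)_M}$, which is what drives Corollary~\ref{ochvazh}.
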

 \begin{proof}
Rewrite $\wt B_X$ as in \eqref{CONTREXT1}:
\[
\wt B_X=\begin{bmatrix}B_0&D_{B_0}K_0^*&0\cr K_0
D_{B_0}&-K_0B_0K_0^*+D_{K_0^*}X_{11}D_{K_0^*}& D_{K_0^*}X_{12}\cr
0&X^*_{12}D_{K_0^*}&X_{22}\end{bmatrix}
:\begin{array}{l}\sH_0\\\oplus\\\sN\\\oplus\\\cH\end{array}\to
\begin{array}{l}\sH_0\\\oplus\\\sN\\\oplus\\\cH\end{array}.
\]
Let $\cX$ be the Hermitian contraction determined by the first
column of $X$,
\[
\cX=\begin{bmatrix}X_{11}\cr X_{12}^*
\end{bmatrix}:\sD_{ K_0^*}\to\begin{array}{l}\sD_{ K_0^*}\\\oplus\\\cH\end{array}.
\]
Then one can consider $X$ as an $sc$-extension of $\cX$.
Analogously, define the Hermitian contraction $\cB_\cX$ by
\[
\cB_\cX=\begin{bmatrix}B_0&D_{B_0}K_0^*\cr K_0
D_{B_0}&-K_0B_0K_0^*+D_{K_0^*}X_{11}D_{K_0^*}\cr
0&X_{12}^*D_{K_0^*}\end{bmatrix}
:\begin{array}{l}\sH_0\\\oplus\\\sN\end{array}\to
\begin{array}{l}\sH_0\\\oplus\\\sN\\\oplus\\\cH\end{array}.
\]
Now we consider $sc$-extensions of $\cX$ in the Hilbert space
$\wh\cH=\sD_{K_0^*}\oplus\cH$ and $sc$-extensions of $\cB_\cX$ in
the Hilbert space $\sH\oplus\cH=\sH_0\oplus\sN\oplus\cH.$ It is
evident that
\[
\wt B_X\supset\cB_\cX\iff X\supset\cX.
\]
All $sc$-extensions of $\cX$ form the operator interval
$[(\cX)_\mu,(\cX)_M]$. On the other hand, the form of $\wt B_X$
shows that
\[
 X_1\leq X_2 \iff \wt B_{X_1} \leq \wt B_{X_2}.
\]
Hence,
\[
 X\in [(\cX)_\mu,(\cX)_M] \,\Rightarrow\, \wt B_X\in[\wt B_{(\cX)_\mu},\wt B_{(\cX)_M}].
\]

On the other hand, every $sc$-extension $\wt B$ of $B$ in
$\sH\oplus\cH$ is of the form $\wt B_X$, where $X$ is a selfadjoint
contraction in $\sD_{K_0^*}\oplus\cH$; see \eqref{blx},
\eqref{CONTREXT1}. It follows that if $\wt B$ is an $sc$-extension
of $B_\cX$, then $\wt B$ is also an $sc$-extension of $B\,(\subset
B_\cX)$, i.e., $\wt B=\wt B_X,$  where $X$ is the $sc$-extension of
$\cX.$ Hence,
\[
\wt B_X\in [(\cB_\cX)_\mu, (\cB_\cX)_M] \,\Rightarrow\,
X\in[(\cX)_\mu,(\cX)_M].
\]
One concludes that
\begin{equation}\label{extBX}
 (\cB_\cX)_\mu=\wt B_{(\cX)_\mu},\quad (\cB_\cX)_M=\wt B_{(\cX)_M}.
\end{equation}
Since for all $X_1, X_2\in[(\cX)_\mu,(\cX)_M]$ one has
\[
 \left(\wt B_{X_1}-\wt B_{X_2}\right)\uphar\cH=\left(X_1-X_2\right)\uphar\cH,
\]
the equalities \eqref{rn1} applied to $I\pm \wt B_X$ and $(I\pm X)$
yield \eqref{priyat} in view of \eqref{extBX}.
\end{proof}
\begin{corollary}\label{ochvazh}
The following statements are equivalent:
\begin{enumerate}
\def\labelenumi{\rm (\roman{enumi})}
\item $(I+ \wt B_X)_\cH=0$ and $(I- \wt B_X)_\cH=0$;
\item $(I+ X)_\cH=0$ and $(I- X)_\cH=0$;
\item $\wt B_X$ is a unique sc-extension of Hermitian contraction  $\cB_\cX$;
\item $X$ is a unique sc-extension of Hermitian contraction  $\cX$.
\end{enumerate}
\end{corollary}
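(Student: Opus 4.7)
The plan is to prove the chain of equivalences by combining Theorem \ref{equshorts} with the identities \eqref{rn1} (applied to both $\cX$ and $\cB_\cX$) and with the relation \eqref{extBX} established in the proof of Theorem \ref{equshorts}. The equivalence (i)~$\iff$~(ii) should be immediate from Theorem~\ref{equshorts}, so the real content of the corollary lies in translating the vanishing of the shortenings into uniqueness of the sc-extension.

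First I would record the direction (i)~$\iff$~(ii): by \eqref{priyat} the operators $(I\pm \wt B_X)_\cH\uphar\cH$ equal $(I\pm X)_\cH\uphar\cH$, so both shortenings vanish on $\cH$ simultaneously, and since these shortenings already have range in $\cH$, the full equality of the shortenings follows. Next I would handle (ii)~$\iff$~(iv). Here I apply the general identities \eqref{rn1} to the nondensely defined Hermitian contraction $\cX$ acting into $\sD_{K_0^*}\oplus\cH$, with $\cH$ playing the role of $\sN$. This yields
\[
 (I+X)_\cH = X - (\cX)_\mu, \qquad (I-X)_\cH = (\cX)_M - X,
\]
so that both shortenings vanish precisely when $X=(\cX)_\mu=(\cX)_M$, i.e.\ when $X$ is the only sc-extension of~$\cX$.

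For the remaining step (iii)~$\iff$~(iv) I would use \eqref{extBX}, which in the proof of Theorem~\ref{equshorts} identifies the endpoints of the operator interval of sc-extensions of $\cB_\cX$ with the images $\wt B_{(\cX)_\mu}$ and $\wt B_{(\cX)_M}$ of the endpoints of the interval for $\cX$. Since $X\mapsto \wt B_X$ is order-preserving and injective on $[(\cX)_\mu,(\cX)_M]$, the collapse $(\cB_\cX)_\mu=(\cB_\cX)_M$ occurs if and only if $(\cX)_\mu=(\cX)_M$, giving (iii)~$\iff$~(iv); alternatively, one may obtain (i)~$\iff$~(iii) directly by applying \eqref{rn1} to $\cB_\cX$ with defect subspace $\cH$ to get $(I+\wt B_X)_\cH = \wt B_X-(\cB_\cX)_\mu$ and $(I-\wt B_X)_\cH = (\cB_\cX)_M-\wt B_X$, and then combine with (i)~$\iff$~(ii) and (ii)~$\iff$~(iv). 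I do not expect a serious obstacle: the whole argument is a bookkeeping exercise built on Theorem~\ref{equshorts}. The only point requiring mild care is to make sure one applies the identities \eqref{rn1} to the correct Hermitian contraction, keeping track of which space plays the role of the defect subspace ($\cH$ in each case), and to invoke \eqref{extBX} to translate between uniqueness for $\cX$ and uniqueness for $\cB_\cX$.
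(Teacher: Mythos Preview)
Your proposal is correct and is precisely the argument the paper has in mind: the corollary is stated without proof immediately after Theorem~\ref{equshorts}, and the intended justification is exactly the combination of \eqref{priyat} for (i)~$\iff$~(ii), the identities \eqref{rn1} applied to $\cX$ (resp.\ $\cB_\cX$) for (ii)~$\iff$~(iv) (resp.\ (i)~$\iff$~(iii)), and the relation \eqref{extBX} together with the bijectivity of $X\mapsto\wt B_X$ from Theorem~\ref{qscex} for (iii)~$\iff$~(iv).
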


Theorem~\ref{equshorts} and Corollary~\ref{ochvazh} have important
implications on the contractions $\wh Z_0$, $\wh Z_1$, therefore,
also on the $sc$-extensions $\wh B_0$, $\wh B_1$ of $B$ in the
original Hilbert space $\sH$.

\begin{theorem}
\label{novaya} Let
\[
 X=\begin{bmatrix}X_{11}& X_{12}\cr X^*_{12}&X_{22}
\end{bmatrix}:\begin{array}{l}\\\cK\\\oplus\\\cH\end{array}\to\begin{array}{l}\\\cK\\\oplus\\\cH\end{array}
\]
be a selfadjoint contraction. Suppose that
\begin{equation}
\label{uslovvazh1} (I-X)_\cH=(I+X)_\cH=0,
\end{equation}
and
\begin{equation}
\label{esche} ||X_{22}||<1.
\end{equation}
Let the selfadjoint contractions $\wh Z_0$ and $\wh Z_1$ in $\cK$ be
defined by \eqref{predeli}. Then
\begin{equation}
\label{novrabstva} \begin{array}{l} \ran(\wh Z_1-\wh
Z_0)^{1/2}\cap\ran(I+\wh Z_0)^{1/2}=\{0\}, \\
\ran(\wh Z_1-\wh Z_0)^{1/2}\cap\ran(I-\wh Z_1)^{1/2}=\{0\}.
\end{array}
\end{equation}
\end{theorem}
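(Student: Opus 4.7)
The plan is to transport the range identities from $\cK$ up into the ambient space $\cK \oplus \cH$ and then exploit the shortening hypothesis $(I \pm X)_\cH = 0$ there via \eqref{nol}. As a preparatory step I will identify each of the three ranges. Since $\|X_{22}\| < 1$, the operators $I \pm X_{22}$ are boundedly invertible, the Moore-Penrose pseudo-inverses in \eqref{predeli} become genuine inverses, and a direct computation shows
\[
 \wh Z_1 - \wh Z_0 = 2 X_{12}(I - X_{22}^2)^{-1} X_{12}^*,
\]
whence by Douglas' lemma $\ran(\wh Z_1 - \wh Z_0)^{1/2} = \ran X_{12}$. On the other hand, using the Schur-complement form \eqref{shormat1} one finds $(I + X)_\cK = \mathrm{diag}(I + \wh Z_0, 0)$ and $(I - X)_\cK = \mathrm{diag}(I - \wh Z_1, 0)$, so \eqref{Sh2} delivers
\[
 \ran(I + \wh Z_0)^{1/2} = \cK \cap \ran(I + X)^{1/2}, \qquad \ran(I - \wh Z_1)^{1/2} = \cK \cap \ran(I - X)^{1/2},
\]
viewed as subsets of $\cK \subseteq \cK \oplus \cH$.

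For the first identity in \eqref{novrabstva} I would pick $f$ in $\ran(\wh Z_1 - \wh Z_0)^{1/2} \cap \ran(I + \wh Z_0)^{1/2}$ and write $f = X_{12}\eta$ with $\eta \in \cH$, with $(f, 0) \in \ran(I + X)^{1/2}$. The crux is to cancel the $\cK$-component of $(f, 0)$ by applying $I + X$ to an auxiliary vector supported in $\cH$: for $v = (0, -\eta) \in \cK \oplus \cH$ a short calculation gives $(I + X)v = (-f, -(I + X_{22})\eta)$, so
\[
 (f, 0) + (I + X)v = (0, -(I + X_{22})\eta).
\]
Since $(f, 0) \in \ran(I + X)^{1/2}$ and $(I + X)v \in \ran(I + X) \subseteq \ran(I + X)^{1/2}$, the sum also lies in $\ran(I + X)^{1/2}$, hence in $\cH \cap \ran(I + X)^{1/2}$. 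The hypothesis $(I + X)_\cH = 0$ together with \eqref{nol} forces this intersection to be $\{0\}$, so $(I + X_{22})\eta = 0$; bounded invertibility of $I + X_{22}$ then yields $\eta = 0$ and hence $f = 0$.

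The second identity follows by the same template with $I - X$ in place of $I + X$, the auxiliary vector $v = (0, \eta)$, and the analogous use of $(I - X)_\cH = 0$ together with the invertibility of $I - X_{22}$. I expect the main obstacle to be conceptual rather than technical: one has to recognize that the proof should be staged in the ambient space $\cK \oplus \cH$ (and not inside $\cK$), and then spot the correct auxiliary vector $v$ that simultaneously erases the $\cK$-component of $(f, 0)$ and deposits the obstruction into $\cH$, where the shortening hypothesis can be brought to bear.
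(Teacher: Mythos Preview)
Your argument is correct, and it proceeds by a genuinely different route than the paper. The paper exploits the parametrization of Remark~\ref{parr}: since $(I\pm X)_\cH=0$ forces $LL^*=I_\cH$, one has $X_{12}=D_{X_{11}}L^*$ and, after factoring $I\pm X$ explicitly, obtains
\[
\ran(\wh Z_1-\wh Z_0)^{1/2}=D_{X_{11}}\ran L^*,\qquad
\ran(I+\wh Z_0)^{1/2}=(I+X_{11})^{1/2}(I-X_{11})^{-1/2}\ker L,
\]
so the intersection is trivial because $\ran L^*\perp\ker L$. Your proof bypasses this structural parametrization entirely: you identify $\ran(\wh Z_1-\wh Z_0)^{1/2}=\ran X_{12}$ and $\ran(I+\wh Z_0)^{1/2}=\cK\cap\ran(I+X)^{1/2}$ from \eqref{shormat1}--\eqref{Sh2}, and then the auxiliary vector $v=(0,-\eta)$ shifts the obstruction into $\cH\cap\ran(I+X)^{1/2}$, which \eqref{nol} and the hypothesis $(I+X)_\cH=0$ annihilate. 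Both arguments use $\|X_{22}\|<1$ in essentially the same place (to invert $I\pm X_{22}$ and hence control $\ran(\wh Z_1-\wh Z_0)^{1/2}$). The paper's route yields more explicit formulas for the individual ranges, which can be useful elsewhere; your route is shorter, stays intrinsic, and shows more transparently \emph{why} the shortening hypothesis on $\cH$ forces the conclusion in $\cK$.
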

\begin{proof} By \eqref{predeli} we have
\[
 I_{\cK}+\wh Z_0=(I+X)_{\cK}\uphar\cK,\quad
 I_{\cK}-\wh Z_1=(I-X)_{\cK}\uphar\cK.
\]
Due to the assumption \eqref{uslovvazh1}, the operator $X$ takes the
form
\[
X=\begin{bmatrix} X_{11}& D_{X_{11}}L^*\cr LD_{X_{11}}&
-LX_{11}L^*\end{bmatrix}
:\begin{array}{l}\cK\\\oplus\\\cH\end{array}\to
\begin{array}{l}\cK\\\oplus\\\cH\end{array},
\]
where $LL^*=I_\cH$; see Remark~\ref{parr}. On the other hand,
$X_{12}=D_{X_{11}}L^*= UD_{X_{22}}$ for a contraction
$U\in\bL(\sD_{X_{22}},\cK)$. From the assumption \eqref{esche} it
follows that $D_{X_{22}}$ has a bounded inverse. Hence
$U=D_{X_{11}}L^*D^{-1}_{X_{22}}$ and
\[
\ran U=D_{X_{11}}\ran L^*.
\]
Furthermore, since $\wh Z_1-\wh Z_0=2 UU^*$, see \eqref{aft}, one
obtains
\[
\ran(\wh Z_1-\wh Z_0)^{1/2}=\ran U=D_{X_{11}}\ran L^*.
\]

On the other hand, from the formula for $X$ above it is clear that
\[
 I\pm X=\begin{bmatrix} (I\pm X_{11})^{1/2} \\ L(I\mp X_{11})^{1/2}\end{bmatrix}
  \begin{bmatrix} (I\pm X_{11})^{1/2} \\ L(I\mp X_{11})^{1/2}\end{bmatrix}^*.
\]
This gives a description of $\ran (I\pm X)^{1/2}$ and now an
application of \eqref{Sh2} leads to
\[
 \ran(I+\wh Z_0)^{1/2}=(I+X_{11})^{1/2}(I- X_{11})^{-1/2}\ker L,
\]
\[
\ran(I-\wh Z_1)^{1/2}=(I-X_{11})^{1/2}(I+ X_{11})^{-1/2}\ker L.
\]
Since $\ran L^*\perp\ker L$, one concludes that
\[
\begin{array}{l}
(I-X_{11})^{1/2}\ran L^*\cap(I- X_{11})^{-1/2}\ker L=\{0\},\\
(I+X_{11})^{1/2}\ran L^*\cap(I+ X_{11})^{-1/2}\ker L=\{0\}.
\end{array}
\]
This implies the equalities \eqref{novrabstva}.
\end{proof}

Observe that if $B$ is a Hermitian contraction in $\sH$, if $\wh
Z_0$ and $\wh Z_1$ are selfadjoint contractions in
$\sN(=\sH\ominus\dom B)$ satisfying \eqref{novrabstva}, and if the
$sc$-extensions $\wh B_0$ and $\wh B_1$ of $B$ are given by
\[
\wh B_j=\begin{bmatrix}B_0&D_{B_0}K_0^*\cr K_0D_{B_0}& -K_0B_0
K_0^*+D_{ K_0^*}\wh Z_j D_{ K_0^*} \end{bmatrix},\; j=0,1,
\]
then the pair $\{\wh B_0, \wh B_1\}$ possesses the properties in
\eqref{parsum}. If $\sD_{K^*_0}=\sN$ and $\ker(\wh Z_1-\wh
Z_0)=\{0\}$, then $\ker (\wh B_1-\wh B_0)=\dom B.$ We also note that
if
\[
X=\begin{bmatrix} 0& \cV^* \cr
\cV&0\end{bmatrix}:\begin{array}{l}\cK\\\oplus\\\cH\end{array}\to
\begin{array}{l}\cK\\\oplus\\\cH\end{array},
\]
where $\cV$  is an isometry from $\cK$ into $\cH$, then $(I\pm
X)_\cK=0$ and $\wh Z_0=-I_\cK$, $\wh Z_1=I_\cK$. On the other hand,
$(I\pm X)_\cH=I-\cV\cV^*$ and hence $(I\pm X)_\cH=0$ if and only if
$\cV$ is unitary, i.e., $\ran \cV=\cH$. Therefore, it is possible
that \eqref{esche} and \eqref{novrabstva} are satisfied, while
\eqref{uslovvazh1} fails to hold.

The next result completes the role of exit space extensions in the
study of pairs $\{\wh B_0,\wh B_1\}$ of $sc$-extensions of $B$ in
the original Hilbert space $\sH$ whose $Q$-functions belong to the
classes $\sS_\mu(\sN)$ and $\sS_M(\sN)$; see Definition~\ref{def1}
and Theorem~\ref{Theor}.

\begin{theorem}
\label{construc} 1) Let $\dim\cK=\dim\cH=\infty$. Then there exists
a selfadjoint contractive block operator
\[
 X=\begin{bmatrix}X_{11}& X_{12}\cr X^*_{12}&X_{22}
\end{bmatrix}:\begin{array}{l}\\\cK\\\oplus\\\cH\end{array}\to\begin{array}{l}\\\cK\\\oplus\\\cH\end{array}
\]
satisfying the conditions \eqref{uslovvazh1}, \eqref{esche}, and the
additional conditions
\begin{equation}
\label{nullzer} \ker X^*_{12}=\{0\},
\end{equation}
$$ \wh Z_0\ne -I_\cK,\; \wh Z_1\ne I_\cK,\;
\ker (\wh Z_1-\wh Z_0)=\{0\},$$ where $\wh Z_0$ and $\wh Z_1$ are as
in \eqref{predeli}, i.e., $\wh
Z_0=\left((I+X)_{\cK}-I\right)\uphar\cK$, $\wh
Z_1=\left(I-(I-X)_{\cK}\right)\uphar\cK.$

2) Let $\dim\cK=\infty$ and suppose that $\wh Z_0$ and $\wh Z_1$,
$\wh Z_0\leq \wh Z_1$, are two selfadjoint contractions in $\cK$
which satisfy the conditions \eqref{novrabstva} and the condition
\begin{equation}
\label{zk} \ker(\wh Z_1-\wh Z_0)=\{0\}.
\end{equation}
Then there exists a selfadjoint contractive block operator $X$ in
the Hilbert space $\cK\oplus \cH$, $\dim \cH=\dim \cK$, such that
\[
 (I\pm X)_\cH=0, \quad ||X_{22}||<1,
\]
and $\wh Z_0=\left((I+X)_{\cK}-I\right)\uphar\cK$, $\wh
Z_1=\left(I-(I-X)_{\cK}\right)\uphar\cK$.
\end{theorem}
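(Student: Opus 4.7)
I would prove Part~2 first and derive Part~1 by specializing the construction to a suitable pair $\wh Z_0,\wh Z_1$.

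\textbf{Part 2, construction of $X$.} The strategy is to seek $X$ in the second block form of Remark~\ref{parr},
\[
X=\begin{bmatrix}-UX_{22}U^*+D_{U^*}VD_{U^*}& UD_{X_{22}}\\ D_{X_{22}}U^*& X_{22}\end{bmatrix},
\]
so that, by the relations \eqref{aft}, the shortening identities defining $\wh Z_0,\wh Z_1$ in \eqref{predeli} translate to
\[
UU^*=W^2,\qquad D_{U^*}VD_{U^*}=S,
\]
where $W:=(\tfrac12(\wh Z_1-\wh Z_0))^{1/2}$ and $S:=\tfrac12(\wh Z_0+\wh Z_1)$. Fix $X_{22}\in\bL(\cH)$ selfadjoint with $\|X_{22}\|<1$ (so $\sD_{X_{22}}=\cH$); since $\dim\cH=\dim\cK=\infty$, choose a unitary $\Phi\colon\cH\to\cK$ and put $U:=W\Phi$, yielding $UU^*=W^2$. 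For $V$: the inequalities
\[
D_{U^*}^2+S=I+\wh Z_0\ge 0,\qquad D_{U^*}^2-S=I-\wh Z_1\ge 0
\]
imply $|S|\le D_{U^*}^2$ in the quadratic-form sense; polarization then yields $|(Sf,g)|\le\|D_{U^*}f\|\,\|D_{U^*}g\|$ for all $f,g\in\cK$. Moreover, $f\in\ker D_{U^*}$ forces $W^2f=f$, hence $\wh Z_0 f=-f$ and $\wh Z_1 f=f$, so $Sf=0$. Consequently the form $(Sf,g)$ descends through $f\mapsto D_{U^*}f$ to a bounded selfadjoint form of norm $\le 1$ on $\sD_{U^*}$, whose Riesz representative is the required $V$.

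\textbf{Verifying $(I\pm X)_\cH=0$.} Adding and subtracting the two Schur-complement identities $(I\pm X_{22})=X_{12}^*(I\pm X_{11})^{-1}X_{12}$ (into which $(I\pm X)_\cH=0$ decomposes) reduces the condition to $K_0K_0^*=I_\cH$, where $K_0$ is the Kre\u\i n-type operator defined by $K_0D_{X_{11}}=X_{12}^*$ on $\ran D_{X_{11}}$. In the first parametrization of Remark~\ref{parr}, $K_0$ coincides with the contraction $L\colon\sD_{X_{11}}\to\cH$ given by $X_{12}=D_{X_{11}}L^*$. Rewriting \eqref{novrabstva} through Remark~\ref{paralslozh} as the parallel-sum identities $(\wh Z_1-\wh Z_0):(I+\wh Z_0)=0$ and $(\wh Z_1-\wh Z_0):(I-\wh Z_1)=0$, and using \eqref{zk} to obtain density of $\ran X_{12}=\ran(\wh Z_1-\wh Z_0)^{1/2}$ in $\cK$, I would verify $LL^*=I_\cH$ by matching these two parallel-sum conditions with the two range identities from the proof of Theorem~\ref{novaya}. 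I expect this verification to be the main technical obstacle; it is the step where Douglas-type factorizations realize \eqref{novrabstva} as the coisometric condition on $L$, rather than merely the contractivity that comes automatically from the block form.

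\textbf{Part 1.} Apply Part~2 to a pair $\wh Z_0,\wh Z_1$ satisfying \eqref{novrabstva}, \eqref{zk}, $\wh Z_0\ne-I_\cK$, $\wh Z_1\ne I_\cK$; such pairs exist in infinite dimensions, as remarked after Theorem~\ref{Theor} and constructed in \cite{AHS,ArlHassi_2014}. The remaining condition $\ker X_{12}^*=\{0\}$ reduces to $\cran X_{12}=\cK$, which by the identity $\ran X_{12}=\ran(\wh Z_1-\wh Z_0)^{1/2}$ (from the proof of Theorem~\ref{novaya}) is precisely \eqref{zk}.
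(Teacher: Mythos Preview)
Your construction in Part~2 is essentially the paper's: choosing $X_{22}=0$ and $\cV=\Phi^{-1}$ unitary recovers exactly the block operator the paper writes down (this is the content of Proposition~\ref{parr11} with $X_{22}=0$). The gap is in your verification of $(I\pm X)_\cH=0$. The route through $LL^*=I_\cH$ that you sketch is circular as stated: the ``range identities from the proof of Theorem~\ref{novaya}'' you plan to match were \emph{derived} there from the form $X=\begin{bmatrix}X_{11}&D_{X_{11}}L^*\\LD_{X_{11}}&-LX_{11}L^*\end{bmatrix}$, which already presupposes $LL^*=I_\cH$. To actually verify $LL^*=I_\cH$ you would have to compute $L$ from the data $(\wh Z_0,\wh Z_1)$ via $D_{X_{11}}L^*=W\cV^*$ with $X_{11}=\tfrac12(\wh Z_0+\wh Z_1)$, and this inversion through $D_{X_{11}}$ is exactly where the hypotheses \eqref{novrabstva}, \eqref{zk} must enter --- but you have not shown how. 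The paper bypasses this entirely: with $X_{22}=0$ one has the explicit factorizations
\[
I+X=\begin{bmatrix}(I+\wh Z_0)^{1/2}&W\cV^*\\0&I\end{bmatrix}
\begin{bmatrix}(I+\wh Z_0)^{1/2}&W\cV^*\\0&I\end{bmatrix}^*,\qquad
I-X=\begin{bmatrix}(I-\wh Z_1)^{1/2}&-W\cV^*\\0&I\end{bmatrix}
\begin{bmatrix}(I-\wh Z_1)^{1/2}&-W\cV^*\\0&I\end{bmatrix}^*,
\]
which give $\ran(I\pm X)^{1/2}$ directly; then $(0,h)\in\ran(I+X)^{1/2}$ forces $W\cV^*h\in\ran(I+\wh Z_0)^{1/2}$, and \eqref{novrabstva} together with \eqref{zk} and unitarity of $\cV$ immediately yield $h=0$. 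Allowing general $\|X_{22}\|<1$ as you do only obscures this factorization.

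For Part~1 your reduction to Part~2 is logically sound but relies on the external constructions in \cite{AHS,ArlHassi_2014}, whereas the paper is self-contained: it builds $X_{11}$ explicitly on a decomposition $\cK=\Omega_0\oplus\sM_0$ using a contraction $\cM$ with $\ker D_{\cM^*}=\{0\}$, $\ran D_{\cM^*}\neq\sM_0$, and a sign operator $J_0$ chosen (via \cite{schmud}) so that $J_0\ran D_{\cM^*}\cap\ran D_{\cM^*}=\{0\}$; then takes $L^*$ isometric onto $\Omega_0$. This yields directly $\ker D_{X_{11}}=\{0\}$, $\ran D_{X_{11}}\cap\sM_0=\{0\}$, and $(I\pm X_{11})_{\sM_0}\neq 0$, from which all the stated conditions follow without invoking prior existence results.
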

\begin{proof}
1) We give a construction of a required $X$ in two steps.

\textbf{Step 1.} In $\cK$ choose an infinite dimensional subspace
$\Omega_0$ with an infinite dimensional orthogonal complement
$\sM_0=\cK\ominus\Omega_0$. In this step we construct a special
selfadjoint contraction $X_{11}$ in $\cK=\Omega_0\oplus \sM_0$.

Let $\cA$ be a selfadjoint operator in $\Omega_0$ such that
$||\cA||<1$. Then choose a contraction $\cM\in\bL(\Omega_0,\sM_0)$
such that $\ker D_{\cM^*}=\{0\}$ and $\ran D_{\cM^*}\ne \sM_0$,
i.e., $||\cM f||<||f||$ ($\Leftrightarrow$ $\ker D_{\cM}=\{0\}$) for
all $f\in\Omega_0\setminus\{0\}$, while $||\cM||=1$; cf.
\eqref{DTran2}. Moreover, let $\sL_0$ be a subspace in $\sM_0$ such
that
\begin{equation}
\label{inersect11}
 \sL_0\cap\ran D_{\cM^*}=\{0\} \text{ and } \sL_0^\perp \cap\ran D_{\cM^*}=\{0\};
\end{equation}
cf. \cite{schmud}. Next define the selfadjoint and unitary operator
$J_0$ in $\sM_0$ by
\begin{equation}
\label{010}
 J_0=2P_{\sL_0}-I_{\sM_0}.
\end{equation}
Due to \eqref{inersect11} $J_0$ satisfies
\begin{equation}
\label{inersect21} J_0\ran D_{\cM^*}\cap\ran D_{\cM^*}=\{0\}.
\end{equation}
Now, introduce
\[
X_{11}=\begin{bmatrix}\cA& D_{\cA}\cM^*\cr\cM D_{\cA}&
-\cM\cA\cM^*+D_{\cM^*}J_0D_{\cM^*}\end{bmatrix}:\begin{array}{l}\Omega_0\\\oplus\\\sM_0\end{array}\to
\begin{array}{l}\Omega_0\\\oplus\\\sM_0\end{array}.
\]
We claim that $X_{11}$ satisfies the equalities
\begin{equation}
\label{kerzero1} \ker D_{X_{11}}=\{0\}
\end{equation}
and
\begin{equation}
\label{ranzero1} \ran D_{X_{11}}\cap\sM_0=\{0\}.
\end{equation}
Since $J_0$ in \eqref{010} is unitary, $D_{J_0}=0$ and hence Remark
\ref{parr} shows that for all $\vec a=\begin{bmatrix}h\cr
g\end{bmatrix}$
\begin{equation}\label{DX11}
 \left\|D_{X_{11}}\vec a\right\|^2
 =\left\|D_{\cM}\left(D_{\cA} h-\cA\cM^* g\right)-\cM^*J_0D_{\cM^*}g\right\|^2.
\end{equation}
Hence, if $ \left\|D_{X_{11}}\vec a\right\|^2=0$ then it follows
from \eqref{DTran1} that there exists $x\in\sM_0$ such that $D_{\cA}
h-\cA\cM^* g=\cM^* x$ and $J_0D_{\cM^*}g-D_{\cM^*}x\in \ker \cM^*
\subset \ran D_{\cM^*}$. Now \eqref{inersect21} gives
$J_0D_{\cM^*}g=0$ and, hence, $g=0$ and $h=0$, since also $\ker
D_A=0$. So \eqref{kerzero1} holds true.

On the other hand, by applying \eqref{Sh1} to \eqref{DX11} it is
seen that $\left(D^2_{X_{11}}\right)_{\sM_0}=0$, and hence
\eqref{ranzero1} is obtained from \eqref{nol}. Furthermore,  an
application of Remark \ref{parr} shows that when $\sL_0\neq
\{0\}\neq \sL_0^\perp$ then, equivalently,
\begin{equation}
 \label{nerav}
 (I+X_{11})_{\sM_0}\ne 0,\; (I-X_{11})_{\sM_0}\ne 0.
\end{equation}
\textbf{Step 2.} Let $\cH$ be an infinite-dimensional Hilbert space,
let $L^*$ be an isometry from $\cH$ into $\cK$ such that $\ran
L^*=\Omega_0$, and define
\[
X=\begin{bmatrix}X_{11}& D_{X_{11}}L^*\cr LD_{X_{11}}&-LX_{11}L^*
\end{bmatrix}:\begin{array}{l}\cK\\\oplus\\\cH\end{array}\to
\begin{array}{l}\cK\\\oplus\\\cH\end{array}.
\]
It follows from $\ran L^*=\Omega_0$ that $X_{22}=-LP_{\Omega_0}\cA
L^*$ and thus $||X_{22}||<1$ by the choice of $\cA$. Since $\ker L
=\sM_0$, the equalities \eqref{kerzero1}, \eqref{ranzero1} yield
$\ker LD_{X_{11}}=\{0\}$. Therefore,
\[
(I\pm X)_\cH=0,\; ||X_{22}||<1,\; \ker X^*_{12}=\{0\}, \; \ker
X_{12}=\{0\},
\]
where the first equality holds by Remark \ref{parr}. By applying
Theorem \ref{novaya} one concludes that $\wh Z_0$ and $\wh Z_1$ have
properties \eqref{novrabstva}. Moreover, from \eqref{nerav} it
follows that $ \wh Z_0\ne -I_{\cK},$ $\wh Z_1\ne I_{\cK}$. Thus,
relations in \eqref{uslovvazh1}, \eqref{esche}, \eqref{nullzer} are
valid for $X$. Finally, the condition $\ker(\wh Z_1-\wh Z_0)=\{0\}$
is obtained from \eqref{nullzer} and \eqref{eqzer}.

2) Define $X$ by
\[
X=\begin{bmatrix} \frac{\wh Z_1+\wh Z_0}{2}& \left(\frac{\wh Z_1-\wh
Z_0}{2}\right)^{1/2}\cV^* \cr \cV\left(\frac{\wh Z_1-\wh
Z_0}{2}\right)^{1/2}&0\end{bmatrix}:
\begin{array}{l}\cK\\\oplus\\\cH\end{array}\to
\begin{array}{l}\cK\\\oplus\\\cH\end{array},
\]
where $\cV:\cH\to\cK$ is unitary. Clearly, $X$ is a selfadjoint
contraction in $\cH\oplus\cK$; cf. Theorem \ref{qscex}, Proposition
\ref{parr11}. Next observe that
\[
 I+X=\begin{bmatrix} (I+\wt Z_0)^{1/2} & \left(\frac{\wh Z_1-\wh Z_0}{2}\right)^{1/2}\cV^*\cr 0 & I\end{bmatrix}
     \begin{bmatrix} (I+\wt Z_0)^{1/2} & \left(\frac{\wh Z_1-\wh Z_0}{2}\right)^{1/2}\cV^*\cr 0 & I\end{bmatrix}^*
\]
and
\[
 I-X=\begin{bmatrix} (I-\wt Z_0)^{1/2} & -\left(\frac{\wh Z_1-\wh Z_0}{2}\right)^{1/2}\cV^*\cr 0 & I\end{bmatrix}
     \begin{bmatrix} (I-\wt Z_0)^{1/2} & -\left(\frac{\wh Z_1-\wh Z_0}{2}\right)^{1/2}\cV^*\cr 0 & I\end{bmatrix}^*.
\]
These two formulas give descriptions for $\ran(I+X)^{1/2}$ and
$\ran(I-X)^{1/2}$, respectively. Now using the assumptions
\eqref{novrabstva} and \eqref{zk} one concludes that
\[
 \ran(I\pm X)^{1/2}\cap (\{0\}\oplus \cH)=\{0\}.
\]
According to \eqref{Sh2} this means that $(I\pm X)_\cH=0$.

Finally, the equalities $\wh
Z_0=\left((I+X)_{\cK}-I\right)\uphar\cK$ and $\wh
Z_1=\left(I-(I-X)_{\cK}\right)\uphar\cK$ are clear from Proposition
\ref{parr11}.
\end{proof}

In particular, Theorem \ref{construc} contains an improvement of
Theorem~\ref{Theor}: given any Hermitian contraction $B$ in $\sH$
with $\dim \sN=\dim(\sH\ominus\dom B)=\infty$ it enables to
construct pairs $\{\wh B_0, \wh B_1\}$ of $sc$-extensions of $B$ in
$\sH$, which differ from the pair $\{\wh B_\mu, \wh B_M\}$ and
satisfy the conditions \eqref{parsum}, directly from one exit space
extension $\wt B_X$ of $B$ via the formulas
\eqref{CONTREXT1}--\eqref{BPHIPM1}. Furthermore, all the key
properties of $\{\wh B_0, \wh B_1\}$ are expressed in simple terms
and the choice of appropriate parameters $X$ is specified
explicitly.

\section{Compressed resolvents}

Let $B$ be Hermitian contraction in $\sH$ and let $\wt B$ be a
$qsc$-extension of $B$ in the Hilbert space $\sH\oplus\cH$. Recall
that then $\wt B=\wt B_X$ can be rewritten in the form
\eqref{CONTREXT1} for some contractive block operator $X$ of the
form \eqref{blx}. To formulate the next result it is useful to
associate with $X$ the operator function
\begin{equation}\label{FX}
 \Phi_X(z)=X_{11}+zX_{12}(I-zX_{22})^{-1}X_{21},\quad |z|<1.
\end{equation}
If, in addition, $X_{22}$ is selfadjoint, then $\Phi_X(z)$ admits a
holomorphic continuation to all points
$z\in\dC\setminus\{(-\infty,-1]\cup[1,+\infty)\}$. Observe, that
$\Phi_X(z)$ can be also interpreted as the transfer function of the
passive system
\[
\sigma=\left\{\begin{bmatrix} X_{11}&X_{12} \cr
X_{21}&X_{22}\end{bmatrix};\sD_{K^*_0},\sD_{K^*_0}, \cH\right\},
\]
see Section \ref{secS}. In particular, $\Phi_X(z)$ is contractive on
the unit disk $\dD$.

\begin{theorem}
\label{comrescontr} Let $B$ be Hermitian contraction in $\sH$, let
$\wt B=\wt B_X$ be a $qsc$-extension of $B$ in $\sH\oplus\cH$
rewritten in the form \eqref{CONTREXT1} with $X$ given by
\eqref{blx}, and let $\Phi_X(z)$ be as in \eqref{FX}. Then
\begin{equation}
\label{genres}
 P_\sH(z\wt B-I)^{-1}\uphar\sH =\left(z\wh B_X(z)-I_\sH\right)^{-1},\quad
 |z|<1,
\end{equation}
where
\begin{equation}\label{BPHI}
\begin{array}{l}
 \wh B_X(z)=\dfrac{1}{2}\,(B_\mu+B_M)+\dfrac{1}{2}\,(B_M-B_\mu)^{1/2}\Phi_X(z)(B_M-B_\mu)^{1/2}\\[3mm]
 \qquad\quad=\begin{bmatrix}B_0&D_{B_0}K_0^*\cr K_0 D_{B_0}&-K_0B_0K_0^*+D_{K_0^*}\Phi_X(z)D_{K_0^*}\end{bmatrix}.
\end{array}
\end{equation}
With $z$ fixed, the operator $\wh B_X(z)$ is a $qsc$-extension of
$B$ in the Hilbert space $\sH$.

Furthermore, if $\wt B\in C_{\sH\oplus\cH}(\alpha)$, then
$\Phi_X(z)$ and $\wh B_X(z)$ can be defined for $z\in \Pi(\alpha)$
and
\begin{enumerate}
\item the implications
\[
\begin{array}{l}
\left\{\begin{array}{l}z\in\Pi_+(\beta),\; z\ne\pm 1,\\\beta\in
[\alpha,\pi/2)
\end{array}\right. \Longrightarrow ||\wh B_X(z)\sin\beta+i\cos\beta||\le
1,\\
\left\{\begin{array}{l}z\in\Pi_-(\beta),\; z\ne\pm 1,\\\beta\in
[\alpha,\pi/2)
\end{array}\right. \Longrightarrow ||\wh B_X(z)\sin\beta-i\cos\beta||\le
1,
\end{array}
\]
are valid, therefore
\[
z\in C(\beta)\Longrightarrow B_X(z)\in C_\sH(\beta);
\]
\item there exist strong limits
$$\Phi_X(\pm
1)\in C_{\sD_{K^*_0}}(\alpha),\quad \wh B_X(\pm 1)\in
C_{\sH}(\alpha).$$
\end{enumerate}

In particular, for $\alpha=0$ the operator functions $\Phi_X(z)$ and
$\wh B_X(z)$ are defined for
$z\in\dC\setminus\{(-\infty,-1]\cup[1,+\infty)\}$ and $\Phi_X(\pm
1)$ are selfadjoint contractions given by
\begin{equation}
\label{predeli11} \Phi_X(-1)=\wh Z_0,\quad \Phi_X(1)=\wh Z_1,
\end{equation}
where $\wh Z_0$ and $\wh Z_1$ are as in \eqref{predeli}, and the
$sc$-extensions $\wh B_X(-1)$ and $\wh B_X(+1)$ of $B$ in $\sH$
coincide with $\wh B_0$ and $\wh B_1$ in \eqref{BPHIPM} and
\eqref{BPHIPM1}, respectively.
\end{theorem}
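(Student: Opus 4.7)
The strategy is to read \eqref{genres} off the Schur–Frobenius inversion formula \eqref{inv11} applied to $\wt B=\wt B_X$ with respect to the decomposition $(\sH\oplus\cH)$, and then feed the resulting operator through the general machinery already at our disposal: Theorem~\ref{qscex} for the $qsc$-extension property, Theorem~\ref{calsys} for the sectorial conclusions, and the shorted-operator formulas \eqref{predeli}, \eqref{BPHIPM}, \eqref{BPHIPM1} for the identifications at $z=\pm 1$. Writing the blocks of $\wt B_X$ as in \eqref{CONTREXT1}, the $\sH$-block is $\wt B_{11}=\bigl[\begin{smallmatrix}B_0 & D_{B_0}K_0^*\\ K_0D_{B_0} & -K_0B_0K_0^*+D_{K_0^*}X_{11}D_{K_0^*}\end{smallmatrix}\bigr]$, the cross blocks are $\wt B_{12}=\bigl[\begin{smallmatrix}0\\ D_{K_0^*}X_{12}\end{smallmatrix}\bigr]$ and $\wt B_{21}=[\,0\;\;X_{21}D_{K_0^*}\,]$, and $\wt B_{22}=X_{22}$. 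Since for $|z|<1$ one has $z^{-1}\in\rho(\wt B)\cap\rho(X_{22})$, setting $\lambda=z^{-1}$ in \eqref{inv11} and using $(z\wt B-I)^{-1}=z^{-1}(\wt B-z^{-1}I)^{-1}$ together with $(X_{22}-z^{-1}I)^{-1}=-z(I-zX_{22})^{-1}$ gives
\[
 \bigl(P_\sH(z\wt B-I)^{-1}\uphar\sH\bigr)^{-1}
  = z\,\wt B_{11}+z\,\wt B_{12}(I-zX_{22})^{-1}\wt B_{21}-I_\sH.
\]
The middle term is concentrated in the $\sN\times\sN$ corner and equals $z\,D_{K_0^*}X_{12}(I-zX_{22})^{-1}X_{21}D_{K_0^*}$, so it combines with $\wt B_{11}$ into exactly the block form of $\wh B_X(z)$ in \eqref{BPHI}. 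Using \eqref{ENDS}, which gives $(B_M-B_\mu)/2 = D_{K_0^*}^2 P_\sN$ and $(B_\mu+B_M)/2 = \wt B_{11}\bigl|_{X_{11}=0}$, this block form is immediately seen to coincide with $\tfrac12(B_\mu+B_M)+\tfrac12(B_M-B_\mu)^{1/2}\Phi_X(z)(B_M-B_\mu)^{1/2}$.

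The passive system $\sigma=\bigl\{X;\sD_{K_0^*},\sD_{K_0^*},\cH\bigr\}$ has transfer function $\Phi_X$, which is therefore contractive on $\dD$; inserting $\Phi_X(z)$ as the parameter in \eqref{MCE} and invoking Theorem~\ref{qscex} shows that $\wh B_X(z)$ is a $qsc$-extension of $B$ in $\sH$ for every $z\in\dD$. If $\wt B\in C_{\sH\oplus\cH}(\alpha)$ then Theorem~\ref{calsys} applied to $\sigma$ holomorphically extends $\Phi_X$ to $\Pi(\alpha)$ with the class inclusions $\Phi_X(z)\in C_{\sD_{K_0^*}}(\beta)$ for $z\in C(\beta)$ and with strong non-tangential limits $\Phi_X(\pm 1)\in C_{\sD_{K_0^*}}(\alpha)$. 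Items~(1) and (2) then follow by transporting these properties from the parameter $\Phi_X(z)$ to the operator $\wh B_X(z)$ via the implications \eqref{XtoB}.

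For the selfadjoint case $\alpha=0$, the system $\sigma$ is passive selfadjoint, so $\Phi_X\in {\bf S}^s(\sD_{K_0^*})$ extends holomorphically to $\dC\setminus((-\infty,-1]\cup[1,+\infty))$ and admits selfadjoint strong limits $\Phi_X(\pm 1)$. To match these with $\wh Z_0$ and $\wh Z_1$ in \eqref{predeli}, one compares the boundary values of \eqref{FX} with the shorted-operator formulas: the limit $z\to -1$ formally produces $X_{11}-X_{12}(I+X_{22})^{-1}X_{21}$, and similarly for $z\to +1$. The main technical point, which I expect to be the principal obstacle, is that $I\pm X_{22}$ need not be boundedly invertible, so these identities have to be interpreted through the Moore–Penrose pseudo-inverse as in \eqref{shormat1}; this is legitimate because the contractivity of $X$ forces $\ran X_{21}\subset\ran(I\pm X_{22})^{1/2}$, which is exactly the hypothesis that makes the shorted operator expressions in \eqref{predeli} well-defined and identifies them with the strong limits of $\Phi_X$ at $\pm 1$. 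Once $\Phi_X(\pm 1)=\wh Z_{0},\wh Z_{1}$ is established, the equalities $\wh B_X(-1)=\wh B_0$ and $\wh B_X(+1)=\wh B_1$ are direct readings of the block form \eqref{BPHI} against \eqref{BPHIPM}, \eqref{BPHIPM1}, completing the proof.
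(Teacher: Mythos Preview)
Your proof is correct and follows essentially the same route as the paper: Schur--Frobenius in the form \eqref{inv11} for the compressed resolvent, Theorem~\ref{qscex} for the $qsc$-extension property, Theorem~\ref{calsys} combined with \eqref{XtoB} for the sectorial conclusions, and comparison with \eqref{predeli}, \eqref{BPHIPM}, \eqref{BPHIPM1} at $z=\pm 1$. One small link you leave implicit but the paper spells out: before invoking Theorem~\ref{calsys} for the system $\sigma$ you need $X\in C_{\sD_{K_0^*}\oplus\cH}(\alpha)$, and this follows from $\wt B\in C_{\sH\oplus\cH}(\alpha)$ by the last clause of Theorem~\ref{qscex} applied to the extension $\wt B_X$ in $\sH\oplus\cH$. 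Your treatment of the boundary limits $\Phi_X(\pm 1)$ is actually more careful than the paper's, which simply asserts that \eqref{predeli11} ``can be obtained directly from the formulas in \eqref{predeli} and \eqref{FX}''; your observation that the range condition $\ran X_{21}\subset\ran(I\pm X_{22})^{1/2}$ is what makes the Moore--Penrose expressions in \eqref{predeli} coincide with the strong limits is exactly the right justification.
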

\begin{proof}
Since $||\Phi(z)||\le 1$, the operator $\wh B_X(z)$ in \eqref{BPHI}
is a $qsc$-extension of $B$ for each $z$, $|z|<1$; see
Theorem~\ref{qscex}. Using \eqref{inv11} and \eqref{CONTREXT1} we
get for $|\lambda|>1$
\begin{multline*}
 \left(P_\sH(\wt B-\lambda)^{-1}\uphar\sH\right)^{-1}
  =\begin{bmatrix}B_0&D_{B_0}K_0^*\cr K_0 D_{B_0}&-K_0B_0K_0^*+D_{K_0^*}X_{11}D_{K_0^*}\end{bmatrix}-\lambda I_\sH\\
 \qquad -\begin{bmatrix}0\cr D_{K^*_0}X_{12}\end{bmatrix}(X_{22}-\lambda)^{-1}
\begin{bmatrix}0&X_{21}D_{K^*_0}\end{bmatrix}\\
=\begin{bmatrix}B_0&D_{B_0}K_0^*\cr K_0
D_{B_0}&-K_0B_0K_0^*+D_{K_0^*}\left(X_{11}-X_{12}(X_{22}-\lambda
)^{-1}X_{21}\right)D_{K_0^*}\end{bmatrix}-\lambda I_\sH.
\end{multline*}
Consequently, with $|z|<1$ this leads to
\[
\begin{array}{ll}
 \left(P_\sH(z\wt B-I)^{-1}\uphar\sH\right)^{-1}
 &=\; z\begin{bmatrix}B_0&D_{B_0}K_0^*\cr K_0
 D_{B_0}&-K_0B_0K_0^*+D_{K_0^*}\Phi_X(z)D_{K_0^*}\end{bmatrix}-
 I_\sH\\[3mm]
 &=\; z\wh B_X(z)-I
\end{array}
\]
and this proves \eqref{genres}.

Suppose that $\wt B\in C_{\sH\oplus\cH}(\alpha)$. Then $X\in
C_{\sD_{K^*_0}\oplus\cH}(\alpha)$ by Theorem \ref{qscex} and this
implies that $X\in C_{\sD_{K^*_0}\oplus\cH}(\beta)$ for
$\beta\in[\alpha,\pi/2)$. Theorem \ref{calsys} combined with
\eqref{XtoB} shows that
\[
\begin{array}{l}
 \left\{\begin{array}{l}z\in\Pi_+(\beta),\\ z\ne\pm 1,\\ \beta\in[\alpha,\pi/2)
 \end{array}\right.
  \Longrightarrow ||\Phi_X(z)\sin\beta+i\cos\beta||\le 1
   \Longrightarrow ||\wh B_X(z)\sin\beta+i\cos\beta||\le 1,\\[7mm]
 \left\{\begin{array}{l}z\in\Pi_-(\beta),\\ z\ne\pm 1,\\\beta\in[\alpha,\pi/2)
 \end{array}\right.
 \Longrightarrow ||\Phi_X(z)\sin\beta-i\cos\beta||\le 1
  \Longrightarrow ||\wh B_X(z)\sin\beta-i\cos\beta||\le 1.
\end{array}
\]
Moreover, according to Theorem \ref{calsys} the strong limit values
$\Phi_X(\pm1)$ exist and in view of \eqref{BPHI} $\wh B_X(\pm 1)$
exist, too, and they satisfy the inclusions in (2). For $\alpha=0$
the equalities in \eqref{predeli11} can be obtained directly from
the formulas in \eqref{predeli} and \eqref{FX}. Finally, by
comparing \eqref{BPHIPM}, \eqref{BPHIPM1} and \eqref{BPHI} one
concludes that $\wh B_X(-1)=\wh B_0$ and $\wh B_X(+1)=\wh B_1$.
\end{proof}

Let $X$ and $\wt B_X$ be given by \eqref{blx} and \eqref{CONTREXT1}.
Define the operator $\wt C$ in $\sH=\sH_0\oplus\sN$ by
 \begin{equation}
 \label{opc}
\wt C=\begin{bmatrix}B_0&D_{B_0}K^*_0 \cr
K_0D_{B_0}&-K_0B_0K_0^*+D_{K_0^*}X_{11}D_{K_0^*}\end{bmatrix}:\begin{array}{l}\sH_0\\\oplus\\\sN\end{array}\to
\begin{array}{l}\sH_0\\\oplus\\\sN\end{array},
\end{equation}
 and let $\wt M:\cH\to\sH$ and its adjoint
$\wt M^*:\sH\to\cH$ be given by
\[
 \wt M=\begin{bmatrix}0\cr D_{K^*_0}X_{12}\end{bmatrix}:\cH\to
 \begin{array}{l}\sH_0\\\oplus\\\sN\end{array},
\quad
 \wt M^*=\begin{bmatrix}0&
 X^*_{12}D_{K^*_0}\end{bmatrix}:\begin{array}{l}\sH_0\\\oplus\\\sN\end{array}\to\cH.
\]
Let the operator $\wt B=\wt B_X$ be given by \eqref{CONTREXT1}. We
rewrite it in the form
\[
\wt B_X=\begin{bmatrix}\wt C&\wt M \cr \wt
M^*&X_{22}\end{bmatrix}:\begin{array}{l}\sH\\\oplus\\\cH\end{array}\to
\begin{array}{l}\sH\\\oplus\\\cH\end{array}.
\]
Consider a passive $pqs$-system $\cT_X=\left\{\wt
B_X;\sH,\sH,\cH\right\} $ with the state space $\cH$ and the
input-output space $\sH$; see Subsection \ref{secS}. The transfer
function of $\cT_X$ is given by
\[
\begin{array}{l}
 \wt C+z\wt M(I-zX_{22})^{-1}\wt M^*\\
 \;=\begin{bmatrix}B_0&D_{B_0}K^*_0 \cr
 K_0D_{B_0}&-K_0B_0K_0^*+D_{K_0^*}X_{11}D_{K_0^*}\end{bmatrix}
 +z\begin{bmatrix}0\cr D_{K^*_0}X_{12}\end{bmatrix}(I-zX_{22})^{-1}
 \begin{bmatrix}0&X^*_{12}D_{K^*_0}\end{bmatrix}\\[3mm]
 \;=\begin{bmatrix}B_0&D_{B_0}K_0^*\cr K_0
 D_{B_0}&-K_0B_0K_0^*+D_{K_0^*}\Phi_X(z)D_{K_0^*}\end{bmatrix}.
\end{array}
\]
Comparing this with \eqref{BPHI} it is seen that the transfer
function of $\cT_X$ is in fact $\wh B_X(z)$.

 Now consider the passive selfadjoint discrete-time system
$ \Sigma_X=\left\{\wt B_X;\cH,\cH,\sH\right\} $ with the state space
$\sH$ and the input-output space $\cH.$ The transfer function
$\Theta$ of the system $\Sigma_X$ is given by
\begin{equation}
\label{transfunc} 
 \Theta(z)=X_{22}+z\wt M^*(I_\sH-z
\wt C)^{-1}\wt M
 =X_{22}+z X^*_{12}D_{K^*_0}P_{\sN}(I_\sH-z \wt
C)^{-1}D_{K^*_0}X_{12}.
\end{equation}
The function
$
Q_{\wt C}(\lambda)=P_{\sN}(\wt C-\lambda I_\sH)^{-1}\uphar\sN,\;
\lambda\in\rho(\wt C) $ is called the $Q$-function \cite{AHS2} of
$\wt C$. Hence,
\[
\Theta(z)=X_{22}- X^*_{12}D_{K^*_0}Q_{\wt
C}(1/z)D_{K^*_0}X_{12},\;1/z\in\rho(\wt C).
\]
The transfer function $\Theta$ possesses the following properties
(see Subsection \ref{secS}):
\begin{enumerate}
\item $\Theta$ belongs to Herglotz-Nevanlinna class for
$z\in \dC\setminus\left((-\infty,-1]\cup [1,+\infty)\right)$,
\item $\Theta$ is a contraction for $z\in
\dD=\{z\in\dC:|z|<1\}$,
\item $\Theta$ has
strong limit values $\Theta(\pm 1)$,
\item if $\beta\in[0,\pi/2)$, then
\begin{equation}\label{propert1}
\begin{array}{l}\left\{\begin{array}{l}
|z\sin\beta+i\cos\beta|\le 1\\
\quad z\ne\pm 1
\end{array}\right.
\Longrightarrow\left\|\Theta(z)\sin\beta+i\cos\beta\,I_\cH\right\|_\cH\le
1,\\
\left\{\begin{array}{l} |z\sin\beta-i\cos\beta|\le 1\\
\quad z\ne\pm 1
\end{array}\right.
\Longrightarrow\left\|\Theta(z)\sin\beta-i\cos\beta\,I_\cH\right\|_\cH\le
1.
\end{array}
\end{equation}
\end{enumerate}
Furthermore, it follows from Theorem \ref{equshorts} and the formula
\eqref{transfunc} that
\[
\begin{array}{l}
\Theta(-1)=(I+\wt
B_X)_\cH\uphar\cH-I_\cH=(I+X)_\cH\uphar\cH-I_\cH,\\
\Theta(1)=I_\cH-(I-\wt B_X)_\cH\uphar\cH=I_\cH-(I-X)_\cH\uphar\cH.
\end{array}
\]
Using the Schur-Frobenius formula \eqref{Sh-Fr1} one gets the
following analog of Theorem \ref{comrescontr}.

\begin{corollary}
\label{compscreas} The relation
\[
P_\cH(z\wt B_X-I)^{-1}\uphar\cH=(z\Theta(z)-I)^{-1},\;
z\in\dC\setminus\{(-\infty,-1]\cup[1,+\infty)\}
\]
is valid.
\end{corollary}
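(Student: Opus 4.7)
The plan is to deduce the identity directly from the Schur--Frobenius formula \eqref{inv11} by viewing $\wt B_X$ in the ``swapped'' block form with $\cH$ in the corner:
\[
\wt B_X=\begin{bmatrix}X_{22}&\wt M^*\cr \wt M&\wt C\end{bmatrix}:
\begin{array}{l}\cH\\\oplus\\\sH\end{array}\to
\begin{array}{l}\cH\\\oplus\\\sH\end{array}.
\]
With the dictionary $D=X_{22}$, $C=\wt M^*$, $B=\wt M$, $A=\wt C$ between \eqref{inv11} and this decomposition, that formula gives at once
\[
\left(P_\cH R_{\wt B_X}(\lambda)\uphar\cH\right)^{-1}
 = X_{22}-\wt M^* R_{\wt C}(\lambda)\wt M-\lambda I_\cH,
 \qquad \lambda\in\rho(\wt B_X)\cap\rho(\wt C).
\]

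Next I would pass from $\lambda$ to $z$ via $\lambda=1/z$ (for $z\ne 0$), using the two elementary identities
\[
(z\wt B_X-I)^{-1}=z^{-1}R_{\wt B_X}(1/z),\qquad R_{\wt C}(1/z)=-z(I_\sH-z\wt C)^{-1}.
\]
Multiplying the previous display by $z$ and rearranging, the right-hand side becomes
\[
zX_{22}+z^{2}\wt M^{*}(I_\sH-z\wt C)^{-1}\wt M-I_\cH
 = z\bigl[X_{22}+z\wt M^{*}(I_\sH-z\wt C)^{-1}\wt M\bigr]-I_\cH
 = z\Theta(z)-I_\cH,
\]
where the last equality is just the definition \eqref{transfunc} of the transfer function $\Theta(z)$ of $\Sigma_X$. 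One final inversion then delivers the claimed formula.

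For the stated domain I would argue as follows. Since $\Sigma_X$ is a selfadjoint passive system, $\wt B_X$ is a selfadjoint contraction and hence so is its upper-left block $\wt C$; consequently $\sigma(\wt B_X)\cup\sigma(\wt C)\subseteq[-1,1]$, and $1/z$ lies in $\rho(\wt B_X)\cap\rho(\wt C)$ exactly when $z\in\dC\setminus\{(-\infty,-1]\cup[1,+\infty)\}$. The excluded value $z=0$ is then trivial, since both sides are equal to $-I_\cH$ by direct inspection.

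No real obstacle is expected: the argument is essentially a bookkeeping exercise, and the only points requiring care are the correct identification of the blocks after the swap $\sH\leftrightarrow\cH$ (so that $\wt M^{*}$ genuinely sits in the lower-left corner, which uses the standing hypothesis $X=X^{*}$) and the sign tracking in the substitution $\lambda=1/z$; after these, the structure of $\Theta(z)$ built into \eqref{transfunc} matches the output of \eqref{inv11} verbatim, with no deeper machinery needed.
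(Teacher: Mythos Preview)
Your proposal is correct and follows exactly the route the paper indicates: the paper simply says ``Using the Schur--Frobenius formula \eqref{Sh-Fr1} one gets the following analog of Theorem~\ref{comrescontr}'' without spelling out details, and what you have written is precisely that computation, mirroring the proof of Theorem~\ref{comrescontr} with the roles of $\sH$ and $\cH$ interchanged. Two cosmetic remarks: the phrase ``exactly when'' in your domain discussion is stronger than needed (you only use one implication, since $\sigma(\wt B_X)\cup\sigma(\wt C)$ may be strictly smaller than $[-1,1]$), and in your swapped block form $\wt M^{*}$ actually occupies the upper-right rather than the lower-left corner; neither affects the argument.
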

In the next theorem we show that a simple Hermitian contraction $B$
and its $sc$-extension $\wt B$ can be recovered up to the unitary
equivalence by means of $\wh B(z)$ or $\Theta(z)$.

\begin{theorem}
\label{realiz1} 1) Let $\sH$ be a Hilbert space and let the
Herglotz-Nevanlinna function $\wh B(z)$ be from the class ${\bf
S}^{s}(\sH)$. Then there exist a Hermitian contraction $B$ in $\sH$
and its $sc$-extension $\wt B$ in the Hilbert space $\sH\oplus\cH$
such that
$$P_\sH(z\wt B-I)^{-1}\uphar \sH=(z\wh B(z)-I)^{-1}.$$
2) Let $\cH$ be a Hilbert space and let the Herglotz-Nevanlinna
function $\Theta(z)$ be from the class ${\bf S}^{s}(\cH)$. Then
there exist a Hilbert space $\sH$, a simple Hermitian contraction
$B$ in $\sH$ and its $sc$-extension $\wt B$ in the Hilbert space
$\sH\oplus\cH$ such that
$$P_\cH(z\wt B-I)^{-1}\uphar \cH=(z\Theta(z)-I)^{-1}.$$
\end{theorem}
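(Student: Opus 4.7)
The plan is to invoke the characterization of ${\bf S}^{s}$-class functions as transfer functions of passive selfadjoint discrete-time systems, stated at the end of Subsection~\ref{secS}, and then read the two assertions off Theorem~\ref{comrescontr} and Corollary~\ref{compscreas}. For part 1), since $\wh B(z)\in {\bf S}^{s}(\sH)$ there exists a passive selfadjoint system
\[
 \tau=\left\{U;\sH,\sH,\cH\right\},\qquad
 U=\begin{bmatrix} D & B' \\ {B'}^* & A\end{bmatrix}
 :\sH\oplus\cH\to\sH\oplus\cH,
\]
with $U=U^*$ contractive whose transfer function is $\wh B(z)$. Set $\wt B:=U$, $\sH_0:=\ker {B'}^*$, $\sN:=\sH\ominus\sH_0=\cran B'$, and $B:=\wt B\uphar\sH_0$. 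Since $\wt B x=Dx\in\sH$ for $x\in\sH_0$, the operator $B$ is a Hermitian contraction in $\sH$ with $\dom B=\sH_0$, and $\wt B$ is an $sc$-extension of $B$ with exit $\cH$. Decomposing $\wt B$ with respect to $\sH_0\oplus\sN\oplus\cH$ produces the block form \eqref{CONTREXT1} for some selfadjoint contractive parameter $X$ as in \eqref{blx}. The identification performed in the discussion preceding Corollary~\ref{compscreas} shows that $\wh B_X(z)$ equals the transfer function of the system $\cT_X=\{\wt B;\sH,\sH,\cH\}$, which by construction coincides with $\wh B(z)$. Theorem~\ref{comrescontr} then yields $P_\sH(z\wt B-I)^{-1}\uphar\sH=(z\wh B(z)-I)^{-1}$.

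For part 2), the same realization theorem, applied with the roles of input-output space and state space interchanged, furnishes a passive selfadjoint system
\[
 \sigma=\{V;\cH,\cH,\sH\},\qquad
 V=\begin{bmatrix} D & B' \\ {B'}^* & A\end{bmatrix}
 :\cH\oplus\sH\to\cH\oplus\sH,
\]
with $V=V^*$ contractive and transfer function $\Theta(z)$. Pass to its minimal part; for a selfadjoint system, controllability and observability coincide, so one may assume $\cspan\{A^n{B'}^*h:n\ge 0,\,h\in\cH\}=\sH$. Viewing $V$ as the operator $\wt B$ on $\sH\oplus\cH$ obtained by permuting the summands, put $\dom B:=\ker B'$ and $B:=A\uphar\ker B'$. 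Then $B$ is a Hermitian contraction in $\sH$ and $\wt B$ is an $sc$-extension of $B$ in $\sH\oplus\cH$. Rewriting $\wt B$ in the block form \eqref{CONTREXT1} and applying Corollary~\ref{compscreas} to the system $\Sigma_X=\{\wt B;\cH,\cH,\sH\}$, whose transfer function is $\Theta(z)$ by construction, yields $P_\cH(z\wt B-I)^{-1}\uphar\cH=(z\Theta(z)-I)^{-1}$.

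The main obstacle is establishing the simplicity of $B$ in part 2). One must translate minimality of the realization $\sigma$ into the absence of a nontrivial reducing subspace $\sL\subseteq\sH$ of $B$ on which $B$ acts selfadjointly. Using the block form \eqref{CONTREXT1} of $\wt B$, such an $\sL$ would be invariant under $A$ and orthogonal to $\ran{B'}^*$, contradicting controllability unless $\sL=\{0\}$; this is a standard minimality--simplicity correspondence for selfadjoint realizations, but the details need to be verified against the specific structure imposed by the embedding $B\subset \wt B$.
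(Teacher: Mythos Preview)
Your proposal is correct and follows essentially the same route as the paper: realize the given ${\bf S}^{s}$-class function as the transfer function of a passive selfadjoint system, define $B$ as the restriction of the state-space (respectively, input-output) operator to the kernel of the off-diagonal block, and then invoke Theorem~\ref{comrescontr} (respectively, Corollary~\ref{compscreas}). The paper in part~1) writes out $\wh B(z)$ explicitly in the form~\eqref{BPHI} via $B_\mu$, $B_M$, whereas you appeal directly to the computation preceding Corollary~\ref{compscreas} identifying the transfer function of $\cT_X$ with $\wh B_X(z)$; this is a cosmetic difference. Your sketch of the simplicity argument in part~2) is exactly what the paper asserts in one line: a nontrivial $\sL\subseteq\sH_0$ invariant under $B$ is $\wt C$-reducing and orthogonal to $\ran\wt M$, hence $\wt C^n\wt M\cH\subseteq\sH\ominus\sL$ for all $n$, contradicting minimality.
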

\begin{proof}
1) It is well known that the function $\wh B(z)$ can be realized as
the transfer function of a minimal passive selfadjoint system
\[
\cT=\left\{\begin{bmatrix}\wt C& \wt M\cr \wt M^*&  Y
\end{bmatrix},\; \sH,\sH,\cH\right\}
\]
with input-output space $\sH$ and the state space $\cH$. Here the
operator
\[
\wt B= \begin{bmatrix}\wt C& \wt M\cr \wt M^*&  Y
\end{bmatrix}:\begin{array}{l}\sH\\\oplus\\\cH\end{array}\to
\begin{array}{l}\sH\\\oplus\\\cH\end{array}
\]
is a selfadjoint contraction, $\cspan\left\{Y^n \wt M^*\sH:\;
n\in\dN_0\right\}=\cH$, and
\[
\wh B(z)=\wt C+z\wt M(I-zY)^{-1}\wt M^*.
\]
The minimal system $\cT$ is determined by $\wh B(z)$ uniquely up to
unitary equivalence (see \cite{AHS3}). For the derivative $\wh
B'(0)$ one has $\wh B'(0)=\wt M\wt M^*$. Now introduce
\[
 \sH_0:=\ker \wh B'(0)=\ker \wt M^*,\quad  B:=\wt C\uphar\sH_0.
\]
Then $B$ is a Hermitian contraction, $\wt C$ is an $sc$-extension of
$B$ in $\sH$, and $\wt B$ is an $sc$-extension of $B$ in
$\sH\oplus\cH.$ Notice, that $B$ is nondensely defined precisely
when
\[
 \sH_0\neq \sH \iff \wt M\neq 0 \iff \cH\neq \{0\}.
\]
Therefore, one can write (cf. \eqref{param1})
\[
 \wt C=\frac{1}{2}(B_M+B_\mu)
  +\frac{1}{2}(B_M-B_\mu)^{1/2}X_{11}(B_M-B_\mu)^{1/2}
\]
and
\[
\wt B=\begin{bmatrix}\dfrac{B_M+B_\mu}{2}&0\cr 0 &0
\end{bmatrix}
+\frac{1}{2}\begin{bmatrix}(B_M-B_\mu)^{1/2}&0\cr 0 &\sqrt{2}I
\end{bmatrix}\begin{bmatrix}X_{11}&X_{12}\cr X^*_{12} &Y
\end{bmatrix}\begin{bmatrix}(B_M-B_\mu)^{1/2}&0\cr 0 &\sqrt{2}I
\end{bmatrix},
\]
where $X_{11}$ is selfadjoint contraction in $\cran(B_M-B_\mu)$ and
$
\begin{bmatrix}X_{11}&X_{12}\cr X^*_{12} &Y
\end{bmatrix}
$ is a selfadjoint contraction in $\cran(B_M-B_\mu)\oplus\cH$. Thus
\[
\wt
B=\begin{bmatrix}\frac{1}{2}(B_M+B_\mu)+\frac{1}{2}(B_M-B_\mu)^{1/2}X_{11}(B_M-B_\mu)^{1/2}&
\frac{1}{\sqrt{2}}(B_M-B_\mu)^{1/2}X_{12}\cr
\frac{1}{\sqrt{2}}X^*_{12}(B_M-B_\mu)^{1/2}&Y\end{bmatrix}
\]
Hence $ \wt M=\frac{1}{\sqrt{2}}(B_M-B_\mu)^{1/2}X_{12},\; \wt
M^*=\frac{1}{\sqrt{2}}X^*_{12}(B_M-B_\mu)^{1/2}, $ and
\begin{multline*}
\wh B(z)=\wt
C+\frac{1}{2}(B_M-B_\mu)^{1/2}zX_{12}(I-zY)^{-1}X^*_{12}(B_M-B_\mu)^{1/2}\\
=\frac{1}{2}(B_M+B_\mu)+\frac{1}{2}(B_M-B_\mu)^{1/2}\left(X_{11}+zX_{12}(I-zY)^{-1}X^*_{12}\right)(B_M-B_\mu)^{1/2}.
\end{multline*}
Therefore, $\wh B(z)$ is of the form \eqref{BPHI}. Applying Theorem
\ref{comrescontr} and the formula \eqref{genres} one gets the first
statement of the theorem.

2) The function $\Theta$ can be realized as the transfer function of
the minimal passive selfadjoint system
\[
\Sigma=\left\{\begin{bmatrix}\wt C& \wt M\cr \wt M^*&  Y
\end{bmatrix},\;\cH,\cH, \sH\right\}
\]
with input-output space $\cH$ and the state space $\sH$. Again the
operator
\[
\wt B= \begin{bmatrix}\wt C& \wt M\cr \wt M^*&  Y
\end{bmatrix}:\begin{array}{l}\sH\\\oplus\\\cH\end{array}\to
\begin{array}{l}\sH\\\oplus\\\cH\end{array}
\]
is a selfadjoint contraction,
\begin{equation}
\label{minsys} \cspan\left\{\wt C^n\wt M\cH,\;
n\in\dN_0\right\}=\sH,
\end{equation}
and
\[
 \Theta(z)=Y+z\wt M^*(I_\sH-z \wt C)^{-1}\wt M,\quad
 z\in\dC\setminus\left\{(-\infty,-1]\cup[1,+\infty)\right\}.
\]
The minimal system $\Sigma$ is determined by $\Theta$ uniquely up to
unitary equivalence; see \cite{AHS3}. Define
\[
 \sN:=\cran\wt M,\quad \sH_0:=\sH\ominus\sN=\ker \wt M^*,
 \quad B:=\wt C\uphar\sH_0.
\]
Then $B$ is a Hermitian contraction, $\dom B=\sH_0$, and $\wt B$ is
an $sc$-extension of $B$. Moreover, \eqref{minsys} means that the
operator $B$ is simple, i.e., it has no reducing subspace on which
$B$ is selfadjoint. To complete the proof it remains to apply
Corollary \ref{compscreas}.
\end{proof}

The last part of this section is devoted to the study of the
following linear fractional transformation of the transfer function
$\Theta(z)$ of the form \eqref{transfunc}:
\begin{equation}\label{formula}
\begin{array}{l}
\cN(\lambda)=I_\cH-2
\left(I_\cH+\Theta\left(\cfrac{1+\lambda}{1-\lambda}\right)\right)^{-1}\\
=\left\{\left\{\left(I_\cH+\Theta\left(\cfrac{1+\lambda}{1-\lambda}\right)\right)h,\left(\Theta\left(\cfrac{1+\lambda}{1-\lambda}\right)-I_\cH\right)h\right\},\;
h\in\cH\right\},
\end{array}
\end{equation}
where $\lambda\in \dC\setminus[0,+\infty)$. From the properties in
\eqref{propert1} it follows that for all $\beta\in (0,\pi/2)$
\[
 \arg\lambda\in[\pi-\beta,\pi+\beta] \;\Longrightarrow \left|\IM
 (\cN(\lambda)f,f)_\cH\right|\le\tan\beta\,\RE(\cN(\lambda)f,f)_\cH,
 \; f\in\dom \cN(\lambda).
\]
Hence, the linear relation $\cN(\lambda)$ is $m$-sectorial for each
$\RE \lambda<0$ and, in particular, if $\lambda<0$ then
$\cN(\lambda)$ is nonnegative and selfadjoint. In the next theorem
the main analytic properties of $\cN(\lambda)$ are established and
an explicit representation for $\cN(\lambda)$ is obtained. Using the
terminology in \cite{Ka} the result shows in particular that
$\cN(\lambda)$ forms a holomorphic family of the type (B) in the
left open half-plane.

\begin{theorem}\label{repweyl11}
The domain $\sL:=\cD[\cN(\lambda)]$ of the closed form
$\cN(\lambda)[\cdot,\cdot]$ associated with the family
$\cN(\lambda)$ in \eqref{formula} does not depend on $\lambda,$
$\RE\lambda<0$, and the form $\cN(\lambda)[h,g]$ admits the
representation
\begin{multline*}
\cN(\lambda)[h,g]\\
=\left(\left(I_{\cH}+V(\wh B_1-\wh B_0)^{1/2}\left(\wh
 B_0-\frac{1-\lambda}{1+\lambda}\, I_\sH\right)^{-1}(\wh B_1-\wh B_0)^{1/2}V^*\right)Yh, Yg\right)
,\\
\RE\lambda<0,\;h,g\in \sL:=\ran(I_\cH+\Theta(0))^{1/2},
\end{multline*}
where $\wh B_0$ and $\wh B_1$ are as defined in \eqref{BPHIPM} and
\eqref{BPHIPM1},
\[
 Y=(I_\cH-\Theta(0))^{1/2}(I_\cH+\Theta(0))^{(-1/2)}:\sL\to \overline{\sL},
\]
and $V:\cran(\wh B_1-\wh B_0)\to\cH$ is an isometry. Here
$(I_\cH+\Theta(0))^{(-1/2)}$ is the Moore-Penrose pseudo inverse.
\end{theorem}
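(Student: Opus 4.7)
The approach is to recognize $\cN(\lambda)$ as (up to sign) the Cayley transform of the contraction $\Theta(\mu)$, $\mu:=\frac{1+\lambda}{1-\lambda}$, apply Proposition~\ref{clfrm} to obtain the form intrinsically, and then use the transfer-function representation of $\Theta$ from \eqref{transfunc} to identify the intrinsic factors with the extrinsic data $\wh B_0$, $\wh B_1$, $V$, $Y$ in the statement.

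First, the M\"obius map $\lambda\mapsto\mu$ sends $\{\RE\lambda<0\}$ biholomorphically onto $\dD$, and by \eqref{propert1} the contraction $\Theta(\mu)$ is in the sectorial class $C_\cH(\beta)$ whenever $\lambda$ lies in the corresponding sector around the negative real axis. Proposition~\ref{clfrm}, applied with $T=\Theta(\mu)$, then directly gives the $m$-sectoriality of $\cN(\lambda)$ and the form expression
\[
 \cN(\lambda)[u,v]=-(u,v)+2\bigl((I+iG(\mu))^{-1}(I+T_R(\mu))^{-1/2}u,(I+T_R(\mu))^{-1/2}v\bigr),
\]
with $T_R(\mu)=\RE\Theta(\mu)$ and closed form domain $\ran(I+T_R(\mu))^{1/2}$.

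Second, to prove $\lambda$-independence of the domain and the identification $\cD[\cN(\lambda)]=\sL$, substitute $\Theta(z)=X_{22}+z\wt M^*(I-z\wt C)^{-1}\wt M$ with $\wt M=\begin{bmatrix}0\\D_{K_0^*}X_{12}\end{bmatrix}$ and expand
\[
 I+T_R(\mu)=(I+X_{22})+\tfrac12\wt M^*\bigl(\mu(I-\mu\wt C)^{-1}+\bar\mu(I-\bar\mu\wt C)^{-1}\bigr)\wt M.
\]
By Remark~\ref{parr}, $X_{12}=UD_{X_{22}}$ for a contraction $U\in\bL(\sD_{X_{22}},\sD_{K_0^*})$, hence $\wt M$ factors through $D_{X_{22}}$ and the perturbation is sandwiched between two copies of $D_{X_{22}}=(I-X_{22})^{1/2}(I+X_{22})^{1/2}$. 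Since $\ran D_{X_{22}}\subseteq\ran(I+X_{22})^{1/2}$ and the perturbed operator is bounded above by a positive multiple of $I+X_{22}$, Douglas' range/majorization lemma yields $\ran(I+T_R(\mu))^{1/2}=\ran(I+X_{22})^{1/2}=\ran(I+\Theta(0))^{1/2}=\sL$ independently of $\mu\in\dD$.

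Third, to obtain the explicit representation I would use the identity $\wh B_1-\wh B_0=2D_{K_0^*}UU^*D_{K_0^*}$ (cf.~\eqref{aft}, \eqref{BPHIPM}--\eqref{BPHIPM1}) together with the polar decomposition $D_{K_0^*}U=V_0|D_{K_0^*}U|$ to define an isometry $V:\cran(\wh B_1-\wh B_0)\to\cH$. After the change of variable $u=(I+X_{22})^{1/2}h$, $v=(I+X_{22})^{1/2}g$ in the form from the first step, and the spectral substitution $\zeta=1/\mu=\frac{1-\lambda}{1+\lambda}$, the resolvent $(I-\mu\wt C)^{-1}$ compressed by $\wt M$ on one side and $\wt M^*$ on the other becomes $V(\wh B_1-\wh B_0)^{1/2}(\wh B_0-\zeta)^{-1}(\wh B_1-\wh B_0)^{1/2}V^*$ up to the factor $\mu$, by virtue of \eqref{BPHIPM}. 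Since $\Theta(0)=X_{22}$ is selfadjoint one has $G(0)=0$, so the $(I+iG(\mu))^{-1}$-factor collapses at $\mu=0$ to the identity; the overall substitution then produces exactly the conjugation by $Y=(I-X_{22})^{1/2}(I+X_{22})^{(-1/2)}$ advertised in the statement.

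The main obstacle is the combinatorial bookkeeping in the third step: matching Proposition~\ref{clfrm}'s intrinsic factors $(I+iG(\mu))^{-1}(I+T_R(\mu))^{-1/2}$ with the extrinsic $V$/$\wh B_0$/$\wh B_1$/$Y$-expression requires precise control of the polar factorization of $D_{K_0^*}U$ and of how the spectral change of variable $\mu\leftrightarrow\zeta$ shifts the resolvent. Once those identifications are in place all remaining manipulations reduce to routine resolvent algebra.
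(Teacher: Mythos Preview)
Your overall strategy---apply Proposition~\ref{clfrm} to $T=\Theta(\mu)$ and then identify the resulting factors with the $\wh B_0,\wh B_1,V,Y$ data---matches the paper's. But two of your three steps have real gaps.

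\textbf{Domain constancy (your Step 2).} Douglas' lemma gives $\ran(I+T_R(\mu))^{1/2}=\ran(I+X_{22})^{1/2}$ only if you control \emph{both} directions, i.e.\ $c(I+X_{22})\le I+T_R(\mu)\le C(I+X_{22})$. You establish the upper bound (the perturbation is sandwiched by $D_{X_{22}}$), but the lower bound is not automatic: the middle factor $\RE[\mu(I-\mu\wt C)^{-1}]$ can be negative, and you have not shown that $I+(I-X_{22})^{1/2}S(\mu)(I-X_{22})^{1/2}$ stays boundedly invertible. The paper bypasses this by invoking Shmul'yan's result that for a nonnegative harmonic operator family the range of the square root is constant, together with a second Shmul'yan representation $\Theta(z)=\Theta(0)+D_{\Theta(0)}\Phi(z)D_{\Theta(0)}$; this packages both bounds at once and also furnishes the function $\Psi$ used below.

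\textbf{The explicit representation (your Step 3).} The main gap is here. You assert that the $\wt M^*(I-\mu\wt C)^{-1}\wt M$ block ``becomes'' $V(\wh B_1-\wh B_0)^{1/2}(\wh B_0-\zeta)^{-1}(\wh B_1-\wh B_0)^{1/2}V^*$ ``by virtue of \eqref{BPHIPM}''. It does not: $\wt C$ and $\wh B_0$ differ by the rank perturbation $D_{K_0^*}U(I-X_{22})U^*D_{K_0^*}P_\sN$, and the passage from the $\wt C$-resolvent to the $\wh B_0$-resolvent requires the Kre\u{\i}n-type identity
\[
Q_{\wt C}(\xi)=Q_{\wh B_0}(\xi)\bigl(I_\sN+(\wt C-\wh B_0)Q_{\wh B_0}(\xi)\bigr)^{-1},
\]
followed by a nontrivial commutation to pull the inverse factor to the outside. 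The paper carries this out and obtains
\[
I_\cH+\Theta(1/\xi)=(I_\cH+X_{22})^{1/2}\Bigl(I+(I-X_{22})^{1/2}U^*D_{K_0^*}Q_{\wh B_0}(\xi)D_{K_0^*}U(I-X_{22})^{1/2}\Bigr)^{-1}(I_\cH+X_{22})^{1/2},
\]
from which the bracket equals $(I+\Psi(1/\xi))^{-1}$ and the final formula follows after substituting into \eqref{cN}. Your remark that the $(I+iG(\mu))^{-1}$ factor ``collapses at $\mu=0$'' does not address the general $\mu$: the point is not that $G$ vanishes, but that the combination $(I+iG)^{-1}(I+T_R)^{-1}$ is recast as $(I+\Theta(0))^{-1/2}(I+\Psi)^{-1}(I+\Theta(0))^{-1/2}$ via the factorization $I+\Theta(z)=(I+\Theta(0))^{1/2}(I+\Psi(z))(I+\Theta(0))^{1/2}$. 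Without that rewriting you cannot match the $Y$-conjugation in the statement for $\mu\neq 0$.
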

\begin{proof} Since $\Theta$ is the transfer function of a
passive selfadjoint discrete-time system, see \eqref{transfunc},
$||\Theta(z)||\le 1$ for all $|z|<1$ and $\Theta^*(z)=\Theta(\bar
z)$. Then the real part
$$\RE(\Theta(z))=\frac{1}{2}(\Theta(z)+\Theta^*(z))$$
satisfies $I_\cH\pm\RE(\Theta(z))\ge 0$ for all $z\in\dD$. Since
$I_\cH\pm\RE(\Theta(z))$ are harmonic functions, a result of Yu.L.
Shmul'yan \cite{Shmul2} yields the following invariance equalities
\[
\begin{array}{l}
\ran(I_\cH+\RE\Theta(z))^{1/2}=\ran(I_\cH+\Theta(0))^{1/2},\\
\ran(I_\cH-\RE\Theta(z)))^{1/2}=\ran(I_\cH-\Theta(0))^{1/2},\\
\end{array}
\]
for all $z\in\dD$; observe that $\Theta(0)=\Theta(0)^*$. From
Douglas Theorem \cite{Doug} we get
\[
 (I_\cH+\RE\Theta(z))^{1/2}=(I_\cH+\Theta(0))^{1/2}F(z),
\]
where $F^{-1}(z)$ is bounded for all $z\in\dD$ in
$\cran(I_\cH+\Theta(0))$. Since $\Theta(z)\in\wt C_\cH$ for all
$z\in\dD$, the operators $I_\cH+\Theta(z)$ are $m$-sectorial bounded
operators. Therefore,
\[
I_\cH+\Theta(z)=(I_\cH+\Theta_R(z))^{1/2}(I+iG(z))(I_\cH+\Theta_R(z))^{1/2},\;
z\in\dD,
\]
where $G(z)=G^*(z)$ in the subspace $\cran(I_\cH+\Theta_R(0))$ and
$I$ is the identity operator in $\cran(I_\cH+\Theta_R(0))$. Hence
\[
I_\cH+\Theta(z)=(I_\cH+\Theta_R(0))^{1/2}F(z)(I+iG(z))F^*(z)(I_\cH+\Theta_R(0))^{1/2},\;
z\in\dD.
\]
In addition, the function $\Theta$ can be represented in the form
(see \cite{Shmul2})
\[
\Theta(z)=\Theta(0)+D_{\Theta(0)}\Phi(z)D_{\Theta(0)},\; z\in\dD,
\]
where $\Phi(z)$ is holomorphic in $\dD.$ Since $
D_{\Theta(0)}=(I_\cH+\Theta(0))^{1/2}(I_\cH-\Theta(0))^{1/2} $ we
obtain
\[
\Theta(z)=\Theta(0)+(I_\cH+\Theta(0))^{1/2}\Psi(z)(I_\cH+\Theta(0))^{1/2},\;
z\in\dD,
\]
where
\[
\Psi(z)=(I_\cH-\Theta(0))^{1/2}\Phi(z)(I_\cH-\Theta(0))^{1/2}.
\]
On the other hand,
\begin{equation}\label{Psi}
 I_\cH+\Theta(z)=(I_\cH+\Theta(0))^{1/2}(I+\Psi(z))(I_\cH+\Theta(0))^{1/2}.
\end{equation}
Thus $I+\Psi(z)=F(z)(I+iG(z))F^*(z).$ It follows that $I+\Psi(z)$
has bounded inverse in $\cran(I_\cH+\Theta(0))^{1/2}$. Furthermore
we use Proposition \ref{clfrm}. For $\lambda$ with $\RE\lambda<0$ we
get
\[
\cD[\cN(\lambda)]=\ran
\left(I_\cH+\RE\Theta\left(\cfrac{1+\lambda}{1-\lambda}\right)\right)^{1/2}
=\ran(I_\cH+\Theta(0))^{1/2},\;\RE\lambda<0.
\]
Consequently, the domain $\cD[\cN(\lambda)]$ of the closed sectorial
form $\cN(\lambda)[\cdot,\cdot]$ is constant if $\RE\lambda<0$. For
$u\in \ran(I_\cH+\Theta(0))^{1/2}$ if $\RE \lambda<0$ and
$\lambda=(z-1)(z+1)^{-1}$ we get
\begin{equation}\label{cN}
\begin{array}{l}
\cN(\lambda)[u]=-||u||^2+2((I+iG(z))^{-1}(I_\cH+\RE\Theta(z))^{-1/2}u,(I_\cH+\RE\Theta(z))^{-1/2}u)\\
=-||u||^2+2((I+iG(z))^{-1}F^{-1}(z)(I_\cH+\Theta(0))^{-1/2}u,F^{-1}(z)(I_\cH+\Theta(0))^{-1/2}u)\\
=-||u||^2+2((I+\Psi(z))^{-1}(I_\cH+\Theta(0))^{-1/2}u,
(I_\cH+\Theta(0))^{-1/2}u).
\end{array}
\end{equation}
Therefore, $\cN(\lambda)[u]$ is holomorphic in $\lambda$ in the left
half-plane. Consequently,  $\cN(\lambda)$ forms a holomorphic family
of type (B) in the left half-plane in the sense of \cite{Ka}.

Next the representation of the form $\cN(\lambda)[\cdot,\cdot]$ is
derived. Let $\wt B=\wt B_X$ be as in \eqref{CONTREXT1}, let $\wh
Z_0$ and $\wh Z_1$ be given by \eqref{predeli}, and let $\wh B_0$
and $\wh B_1$ be given by \eqref{BPHIPM} and \eqref{BPHIPM1},
respectively. Then using the representation
\[
X=\begin{bmatrix} X_{11}& UD_{X_{22}}\cr
D_{X_{22}}U^*&X_{22}\end{bmatrix}:\begin{array}{l}\sN\\\oplus\\\cH\end{array}\to\begin{array}{l}\sN\\\oplus\\\cH\end{array},
\]
where $U\in\bL(\sD_{X_{22}},\sN)$ is a contraction, see Remark
\ref{parr}, one can write
\[
\wh Z_0=X_{11}-U(I_{\sD_{X_{22}}}-X_{22})U^*,\quad \wh
Z_1=X_{11}+U(I_{\sD_{X_{22}}}+X_{22})U^*.
\]
Moreover, $\wh B_1-\wh B_0=2D_{K^*_0}UU^*D_{K^*_0}P_{\sN}$ and if
$\wt C$ is an in \eqref{opc} then
\[
 \wt C-\wh B_0=D_{K^*_0}(X_{11}-\wh Z_0)D_{K^*_0}P_{\sN}
 =D_{K^*_0}U(I_{\sD_{X_{22}}}-X_{22})U^*D_{K^*_0}P_{\sN}.
\]

Define
\[
Q_{\wt C}(\xi)=P_{\sN}(\wt C -\xi I_\sH)^{-1}\uphar\sN,\quad
\xi\in\rho(\wt C).
\]
Then it follows from \eqref{Sh-Fr1} that
\[
Q_{\wt C}(\xi)=Q_{\wh B_0}(\xi)\left(I_{\sN}+(\wt C-\wh B_0)Q_{\wh
B_0}(\xi)\right)^{-1},\quad \xi\in\dC\setminus [-1,1],
\]
cf. \cite{AHS2}. Furthermore, for $\xi\in\dC\setminus [-1,1]$
\[
\begin{array}{l}
X^*_{12}D_{K^*_0}Q_{\wt C}(\xi)D_{K^*_0}X_{12}=D_{X_{22}}U^*D_{K^*_0}Q_{\wt C}(\xi)D_{K^*_0}UD_{X_{22}}\\
\quad= D_{X_{22}}U^*D_{K^*_0}Q_{\wh
B_0}(\xi)\left(I_{\sN}+D_{K^*_0}U(I_{\sD_{X_{22}}}-X_{22})U^*D_{K^*_0}Q_{\wh
B_0}(\xi)\right)^{-1}D_{K^*_0}UD_{X_{22}}\\
\quad=(I_\cH+X_{22})^{1/2}\left(I_{\cH}+(I_\cH-X_{22})^{1/2}U^*D_{K^*_0}Q_{\wh
B_0}(\xi)D_{K^*_0}UP_{\sD_{X_{22}}}(I_\cH-X_{22})^{1/2}\right)^{-1}\\
\qquad\times(I_\cH-X_{22})^{1/2}U^*D_{K^*_0}Q_{\wh
B_0}(\xi)D_{K^*_0}UD_{X_{22}},
\end{array}
\]
where $P_{\sD_{X_{22}}}$ is the orthogonal projection in $\cH$ onto
$\sD_{X_{22}}$ and the last identity follows from
\[
\begin{array}{l}
 \left(I_{\cH}+(I_\cH-X_{22})^{1/2}U^*D_{K^*_0}Q_{\wh B_0}(\xi)D_{K^*_0}UP_{\sD_{X_{22}}}(I_\cH-X_{22})^{1/2}\right)
  (I_\cH-X_{22})^{1/2}U^*D_{K^*_0}Q_{\wh B_0}(\xi) \\
 \quad=(I_\cH-X_{22})^{1/2}U^*D_{K^*_0}Q_{\wh B_0}(\xi)
  \left(I_{\sN}+D_{K^*_0}U(I_{\sD_{X_{22}}}-X_{22})U^*D_{K^*_0}Q_{\wh B_0}(\xi)\right).
\end{array}
\]
This yields, see \eqref{transfunc},
\[
\begin{array}{l}
 I_ \cH+\Theta\left(1/\xi\right)=I_\cH+X_{22}-X^*_{12}D_{K^*_0}Q_{\wt C}(\xi)D_{K^*_0}X_{12}=\\
 (I_\cH+X_{22})^{1/2}\left(I_\cH+(I_\cH-X_{22})^{1/2}U^*D_{K^*_0}Q_{\wh
B_0}(\xi)D_{K^*_0}UP_{\sD_{X_{22}}}(I_{\cH}-X_{22})^{1/2}\right)^{-1}
 (I_\cH+X_{22})^{1/2}.
\end{array}
\]
Since $\Theta(0)=X_{22}$, it follows from \eqref{Psi} that
\[
 I+\Psi\left(1/\xi\right)=\left(I+(I-X_{22})^{1/2}U^*D_{K^*_0}Q_{\wh
B_0}(\xi)D_{K^*_0}UP_{\sD_{X_{22}}}(I-X_{22})^{1/2}\right)^{-1}\uphar\cran(I_\cH+\Theta(0)),
\]
where $I=I_{\cran(I_\cH+\Theta(0))}$. The equality $\wh B_1-\wh
B_0=2D_{K^*_0}UU^*D_{K^*_0}P_{\sN}$ implies that
\[
\sqrt{2} U^*D_{K^*_0}P_{\sN}=V(\wh B_1-\wh B_0)^{1/2},
\]
holds for some isometry $V$ mapping $\cran(\wh B_1-\wh B_0)$ onto
$\cran U^*(\subseteq\sD_{X_{22}}\subseteq\cH)$. Hence,
\[
\begin{array}{l}
\left(I+\Psi\left(1/\xi\right)\right)^{-1}
 =I+(I-X_{22})^{1/2}U^*D_{K^*_0}Q_{\wh B_0}
 (\xi)D_{K^*_0}UP_{\sD_{X_{22}}}(I-X_{22})^{1/2}\\
 =I+\frac{1}{2}(I-X_{22})^{1/2}V(\wh B_1-\wh B_0)^{1/2}Q_{\wh
B_0}(\xi)(\wh B_1-\wh B_0)^{1/2}V^*(I-X_{22})^{1/2}\\
 =\frac{1}{2}(I+X_{22})\\
\quad +\frac{1}{2}(I-X_{22})^{1/2}\left(I+V(\wh
 B_1-\wh B_0)^{1/2}Q_{\wh B_0}(\xi)(\wh B_1-\wh
 B_0)^{1/2}V^*\right)(I-X_{22})^{1/2}.
\end{array}
\]
It remains to substitute this expression into the representation of
$\cN(\lambda)$ in \eqref{cN} to conclude that for
$h,g\in\ran(I+X_{22})^{1/2}$ and for $\RE \lambda<0$ with
$\xi=(1-\lambda)(1+\lambda)^{-1}$,
\[
\cN(\lambda)[h,g] =\left(\left(I+V(\wh B_1-\wh B_0)^{1/2}Q_{\wh
B_0}(\xi)(\wh B_1-\wh B_0)^{1/2}V^*\right)Yh,Yg\right),
\]
where
\[
Y =(I-X_{22})^{1/2}(I+X_{22})^{(-1/2)}
=(I_\cH-\Theta(0))^{1/2}(I_\cH+\Theta(0))^{(-1/2)}:\sL\to
\overline{\sL}.
\]
This completes the proof.
\end{proof}


\bigskip
\noindent \textbf{Acknowledgement} The first author thanks the
V\"ais\"al\"a Foundation for the financial support during his stay
at the University of Vaasa in 2014. The second author is grateful
for the support from the Emil Aaltonen Foundation.

\end{document}